\pgfplotsset{compat=1.18}
\tikzset{snake it/.style={decorate, decoration=snake}}
\newcommand{\nc}{\normalcolor}
\numberwithin{equation}{section}
\newtheorem{theorem}{Theorem}
\newtheorem{corollary}[theorem]{Corollary}
\newtheorem{lemma}[theorem]{Lemma}
\newtheorem{prop}[theorem]{Proposition}
\newtheorem{defn}[theorem]{Definition}
\newtheorem{rem}[theorem]{Remark}
\newcommand{\cc}{{\mathbb{C}}}
\newcommand{\zz}{{\mathbb{Z}}}
\newcommand{\pp}{{\mathbb{P}}}
\newcommand{\oh}{\mathcal{O}}
\newcommand{\im}{{\operatorname{Im}\,}}
\newcommand{\beq}[1]{\begin{equation} \label{#1}}
\newcommand{\eeq}{\end{equation}}
\newcommand*\samethanks[1][\value{footnote}]{\footnotemark[#1]}
\renewcommand{\nc}{\newcommand}
\nc{\rnc}{\renewcommand}
\nc{\R}{\mathbb{R}}
\nc{\C}{\mathbb{C}}
\rnc{\d}{\mathrm{d}}
\nc{\E}{\mathbb{E}}
\nc{\Z}{\mathbb{Z}}
\begin{document}
\title{Quantum diffusion and delocalization in one-dimensional band matrices via the flow method}

\author{Sofiia Dubova\thanks{Department of Mathematics, Harvard Univeristy} \and Kevin Yang\samethanks}
\maketitle

\begin{abstract}
We study a class of Gaussian random band matrices of dimension $N\times N$ and band-width $W$. We show that delocalization holds for bulk eigenvectors and that quantum diffusion holds for the resolvent, all under the assumption that $W\gg N^{8/11}$. This improves the best-known result of \cite{BFYY17,BYY20,YY21}. Our analysis is based on a flow method, and a refinement of it may lead to an improvement on the condition $W\gg N^{8/11}$.
\end{abstract}


\section{Introduction}
In \cite{W57}, Wigner introduced the random matrix universality class to be a model for highly correlated quantum systems. Although the Wigner ensemble is a mean-field model, it is conjectured to also describe the behaviors (e.g. spectral data) of many other models. A prototypical class of Wigner matrices is the \emph{Gaussian unitary ensemble} (GUE), whose entries are standard complex Gaussians independent up to the Hermitian constraint.

A historically important non-mean-field model that is conjecturally related to the random matrix universality class is the Anderson model on lattice $\Z^{d}$. This model is the operator $\Delta+\lambda V$, where $\Delta$ is the discrete Laplacian, where $\lambda>0$, and where $V$ is a random diagonal matrix with i.i.d. entries. It is believed \cite{A58} that for large $\lambda$, eigenvalues of $\Delta+\lambda V$ form a Poisson point process, and eigenvectors have a finite localization (i.e. support) length. For small $\lambda$, eigenvalue and eigenvector statistics of $\Delta+\lambda V$ are believed to agree with random matrix statistics, e.g. delocalization of eigenvectors. There has been a lot of work showing localization for large $\lambda$ \cite{AM93,BK05,CKM87,DRS02,DS20,FMSS85,FS88,GK13,LZ22}. However, delocalization has remained an outstanding problem with some results on the Bethe lattice \cite{AW11,AW13}.

Another important non-mean-field example, which is conjectured to exhibit a similar transition between random matrix and Poisson statistics, is the \emph{random band matrix} \cite{B18}. This is the model of interest in this paper. It is a Hermitian matrix $H$ of dimension $N\times N$. Its entries $H_{xy}$ are i.i.d. complex random variables up to the Hermitian constraint, and they vanish when the distance (with respect to periodic boundary conditions on $\{1,\ldots,N\}$) between the indices $x,y$ is larger than a fixed band width parameter $W$. (The matrix $H$ is also normalized so that the matrix $S$ of variances $S_{xy}=\E|H_{xy}|^{2}$ is a doubly stochastic matrix.) 

When $W\gg1$ in the large-$N$ limit, the global eigenvalue density of $H$ converges to the Wigner semicircle law \cite{BMP91}. However, based on Thouless’ conductance fluctuation theory and scaling arguments \cite{T77}, numerical simulations \cite{CMI90}, and non-rigorous supersymmetric heuristics \cite{E97}, \emph{local} eigenvalue statistics and eigenvector statistics are conjectured to exhibit the following transition \cite{B18} in the bulk (i.e. for eigenvalues with real part $|E|<2$ independent of $N$):
\begin{itemize}
\item If $W\gg N^{1/2}$, then bulk eigenvalues have GUE statistics, and eigenvectors are delocalized, i.e. there is no length-scale $\ell\ll N$ to which eigenvectors are localized.
\item If $W\ll N^{1/2}$, then bulk eigenvalues form a Poisson process, and eigenvectors are localized.
\end{itemize}

Thus, random band matrices and the Anderson model are believed to be similar when $\lambda\sim W^{-1}$. We note that a similar transition exists at the edge, i.e. for eigenvalues of order $N^{-2/3}$ from the spectral edge $2$ of the semicircle. In this case, GUE statistics occur for $W\gg N^{5/6}$ and Poisson statistics occur for $W\ll N^{5/6}$; this was shown by Sodin \cite{S10}, as was a detailed description of spectral statistics at the transition point $W\sim N^{5/6}$.

A long series of works established GUE statistics under improving assumptions on the band width. The works \cite{EK11,EKYY} showed that the percentage of localized eigenvectors in the bulk is $\mathrm{o}(1)$, first for $W\gg N^{6/7}$, then for $W\gg N^{4/5}$. This was improved to a stronger form of delocalization and GUE eigenvalue statistics in \cite{BFYY17,BYY20,YY21} for $W\gg N^{3/4}$. In addition, there has been work on bounds on the localization length \cite{BFYY17,BYY20,PSSS17,YY21}. We also mention \cite{S14}, which shows delocalization for Gaussian band matrices with a special variance profile $S_{xy}$. These use the supersymmetric method, and they work up to $W\gg N^{6/7}$. (Extensions to more general matrices without the Gaussian assumption were given in \cite{BE17}, which require $W\geq cN$.) The supersymmetric method has also been able to show a transition in the two-point correlations of bulk eigenvalues at $W\sim N^{1/2}$ \cite{SS19}, though it is unclear if these methods extend to delocalization versus localization of eigenvectors. On the other side of the transition, localization for all the eigenvectors was shown in \cite{S09,PSSS17,CPSS24}, first for $W\ll N^{1/8}$, then for $W\ll N^{1/7}$ in specific Gaussian models, and most recently for $W\ll N^{1/4}$. 

The goal of this paper is to show delocalization of bulk eigenvectors for a large class of Gaussian random band matrices with very general variance profiles assuming $W\gg N^{8/11}$. This improves on the best-known result of $W\gg N^{3/4}$ \cite{BFYY17,BYY20,YY21}. We also prove a phenomenon known as \emph{quantum diffusion}; this derives the resolvent of a diffusion operator from the resolvent of $H$. Showing delocalization through quantum diffusion, which has the benefit of being ``natural" in the sense explained after Theorem \ref{theorem:qd}, goes back at least to \cite{EK11b}.

Our analysis is built on the flow method. In this method, the spectral parameter $z$ for the resolvent of $H$ follows a constant-speed characteristic in the upper-half plane. Its imaginary part starts at an order $1$ value, and it ends at a small-scale $\eta$ of interest. Simultaneously, the entries of $H$ are realized as Brownian motions. The upshot to this method is the introduction of dynamical ideas. It was used to prove a local semicircle law for a class of Wigner matrices \cite{vSS18,vSS19}, but the application to band matrices seems to be new to this work.

Finally, we mention progress on random band matrices in higher dimension $d$ (in which the matrix size is $N=L^{d}$, and the predicted thresholds are in terms of $W$ and $L$). For $d\geq2$, some lower bounds on the localization length were obtained \cite{EK11,EK11b}. In \cite{XYYY24,YYY21,YYY22}, the authors showed a weak form of delocalization (see Corollary \ref{corollary:deloc}) for bulk eigenvectors in dimension $d\geq8$, as well as GUE eigenvalue statistics in dimension $d\geq7$. The former work assumes only (essentially) the optimal constraint that $W\gg L^{\varepsilon}$; the latter assumes instead that $W\gg L^{95/(d+95)}$ (which, for large dimension $d$, looks like $W\gg L^{\varepsilon}$). 

The next section states precisely our model and main results as well as an outline for the rest of the paper.
\subsection{Acknowledgements}
We thank Horng-Tzer Yau for helpful discussions. K.Y. was supported in part by NSF DMS-2203075.
%
%
%
\section{Main results}
We now introduce the matrix model precisely. First, let $\mathbb{T}_{N}=\Z/N\Z$ be the discrete torus of length $N$. We identify it with $\{1,\ldots,N\}$ and we will give it the periodic distance $|x-y|_{N}:=\min([x-y],[y-x])$ for any $x,y\in\{1,\ldots,N\}$, where $[\cdot]$ means taking the mod-$N$ equivalence class in $\{0,\ldots,N-1\}$.

Next, let $f$ be a symmetric and compactly supported probability density on $\R$. We let $S=(S_{xy};x,y=1,\ldots,N)$ be a doubly stochastic matrix whose entries are given by
\begin{align*}
S_{xy}:=Z_{N,W}^{-1}f\left(\frac{|x-y|_{N}}{W}\right).
\end{align*}
Here, $Z_{N,W}$ is a normalizing constant satisfying $Z_{N,W}\asymp W$, where $\asymp$ means bounded above and below up to fixed, positive factors. We also assume that $S$ admits a matrix square root $S^{1/2}$ satisfying the same properties for a possibly different but still symmetric and compactly supported probability density $f$. (This is an entirely technical assumption that can probably be removed, though it is already quite general. Since our main attention is in the size of $W$ relative to $N$, we do not pursue this.)

The matrix ensemble of interest in this paper is denoted by $H=(H_{xy};x,y=1,\ldots,N)$. It is Hermitian, and for any $x\leq y$, $H_{xy}$ is a complex Gaussian random variable satisfying $\E H_{xy}=0$ and $\E|H_{xy}|^{2}=S_{xy}$.

Next, we introduce the Stieltjes transform of the Wigner semicircle law below:
\begin{align*}
m(z):=\int_{-2}^{2}\frac{1}{2\pi}\frac{\sqrt{4-x^{2}}}{x-z}dx=\frac{-z+\sqrt{z^{2}-4}}{2},
\end{align*}
where the second identity follows from the standard fact (see (3.13) in \cite{EY}) that $m(z)$ is the unique solution to the following self-consistent equation:
\begin{align*}
z=-m(z)^{-1}-m(z), \quad\mathrm{Im}(z),\mathrm{Im}m(z)>0.
\end{align*}
\emph{Throughout this paper, we follow standard notation and write $z=E+i\eta$} with $E\in\R$ and $\eta>0$.

To conclude this subsection, we now introduce frequently used notation for inequalities. We use standard big-O notation. We often write $a\lesssim b$ if $a=\mathrm{O}(b)$ (and similarly $a\gtrsim b$ if $b=\mathrm{O}(a)$). We also introduce the following standard notion of stochastic domination in random matrix theory.
\begin{defn}\label{defn:prec}
Consider two sequences of random variables, denoted by $X = \left\{X_N(s) : N\in \zz_+, s\in S_N\right\}$ and $Y = \left\{Y_N(s) : N\in \zz_+, s\in S_N\right\}$, that are parametrized by $s$ in some index set $S_{N}$. We say that $X$ is stochastically dominated by $Y$ uniformly in $s$ and write $X\prec Y$ or $X = \oh(Y)$ if for any $\varepsilon, D>0$ we have
\[
\sup_{s\in S_N} \pp\left(X_N(s) > N^{\varepsilon} Y_N(s)\right) < N^{-D}
\]
for large enough $N$.
\end{defn}
\subsection{Quantum diffusion}
Our first main theorem is an analysis of the following \emph{$T$-matrix} and its comparison to the \emph{diffusion profile} $\Theta$:
\begin{align*}
T(z)_{ab}&:=T_{ab}:=\sum_{x,y}S^{\frac12}_{ax}|G_{xy}|^{2}S^{\frac12}_{yb};\\
\Theta(z)&:=\Theta:=\frac{|m(z)|^{2}S}{1-|m(z)|^{2}S}.
\end{align*}
We clarify that in the formula for $T(z)_{ab}$, the value $S^{1/2}_{\alpha\beta}$ refers to the $(\alpha,\beta)$ entry of the matrix square root.
\begin{theorem}\label{theorem:qd}
First, assume that $|E|<2$ is fixed. Assume that there exists a fixed $\upsilon>0$ so that $\eta\asymp W^{2}N^{-2}$ and $W\geq N^{8/11+\upsilon}$. We have 
\begin{align}
\max_{x,y}|T_{xy}-\Theta_{xy}|\prec W^{-\frac74}\eta^{-\frac32}.
\end{align}
\end{theorem}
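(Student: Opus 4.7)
The plan is to adapt the stochastic flow method of \cite{vSS18,vSS19} to the band matrix setting. Realize $H$ as the terminal value (at time $t=t^*$) of a matrix Ornstein-Uhlenbeck process $H_t$ whose stationary distribution agrees with that of $H$, and simultaneously evolve the spectral parameter $z_t$ along the characteristic $\partial_t z_t = -m(z_t) - z_t/2$, chosen so that $m(z_t)$ (and hence $\Theta(z_t)$) is invariant along the flow. At the initial time $t=0$ one has $\eta_0\asymp 1$, where the trivial bound $\|G(z_0)\|\leq\eta_0^{-1}$ gives $T_{xy}\approx\Theta_{xy}$ to high precision; the terminal time is chosen so that $\eta_{t^*}\asymp W^2N^{-2}$.

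Next, apply Itô's formula to $T_{ab}(z_t)$ along the joint flow of $(H_t,z_t)$. The deterministic drift coming from the Brownian covariance, combined with the $z_t$-derivative, is engineered to reproduce the self-consistent equation $(1-|m|^2S)T\approx |m|^2S$. Since $\Theta$ is by definition the exact solution $(1-|m|^2S)^{-1}|m|^2S$, the residual $R=T-\Theta$ satisfies a schematic SDE
\begin{align*}
dR \;=\; \mathcal{L}\,R\,dt \;+\; \mathcal{E}\,dt \;+\; dM,
\end{align*}
where $\mathcal{L}$ is a linear operator whose semigroup is comparable to a power of $\Theta$ itself, $\mathcal{E}$ collects higher-order error terms involving three or more resolvent factors, and $dM$ is a martingale driven by the Brownian entries of $H_t$. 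Duhamel's formula expresses $R_{t^*}$ as the integral of $\mathcal{E}\,dt+dM$ against this semigroup, whose diffusive kernel provides the required smoothing. Cubic resolvent terms in $\mathcal{E}$ are reduced via the Ward identity $\sum_y|G_{xy}|^2=\eta^{-1}\mathrm{Im}\,G_{xx}$ to sums involving $T$, while the martingale is controlled by BDG moment estimates on its quadratic variation, which is itself a product of entries of $T$.

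The main obstacle is a self-improving bootstrap on $\max_{x,y}|R_{xy}|$ uniformly in $t$ along the flow. One posits an a priori bound of the form $W^{-7/4}\eta_t^{-3/2}$, feeds it into $\mathcal{E}$ and the quadratic variation of $dM$, and checks that the resulting Duhamel integral reproduces the same bound at time $t^*$. Because the effective amplification operator $(1-|m|^2S)^{-1}$ is nearly singular at the target scale $\eta\asymp W^2N^{-2}$, this amplification is delicate. The threshold $W\gg N^{8/11}$ appears precisely from the requirement that the worst-case error $W^{-7/4}\eta^{-3/2}$ at $\eta\asymp W^2N^{-2}$ remain smaller than $\Theta_{xy}\sim N/W^2$; indeed, $(W^2/N^2)^{3/2}\gg W^{1/4}/N$ rearranges to $W^{11/4}\gg N^2$. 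Carrying out the bootstrap with sharp enough control over the higher-moment fluctuations in $\mathcal{E}$ and in the martingale's quadratic variation --- uniformly over a dense net of times $t\in[0,t^*]$ and over all pairs $(x,y)$ --- is the technically most demanding step.
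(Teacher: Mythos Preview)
Your outline shares the flow-method skeleton with the paper but differs in setup and, more importantly, underestimates what is needed to control the drift.

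\textbf{Setup.} The paper does \emph{not} use an Ornstein--Uhlenbeck process with $m(z_t)$ held invariant. It runs a pure matrix Brownian motion $dH_{t,ij}=\sqrt{S_{ij}}\,dB_{t,ij}$ from $H_0=0$, and evolves the spectral parameter linearly, $w_t=z+(1-t)m(z)$, with $m=m(z)$ fixed throughout. This yields $G_0=m(z)\,\mathrm{Id}$ and hence $T_0=\Theta_0=|m|^2S$ \emph{exactly}, so the error starts from zero rather than from an approximate local law at $\eta_0\asymp1$. The It\^o calculation produces a Riccati equation $dT_t=T_t^2\,dt+(\text{martingale})+(\text{drift})$, and the \emph{time-dependent} profile $\Theta_t=|m|^2S(1-t|m|^2S)^{-1}$ solves $d\Theta_t=\Theta_t^2\,dt$ exactly. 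Subtracting gives
\[
d\mathcal{E}_t=(\Theta_t\mathcal{E}_t+\mathcal{E}_t\Theta_t)\,dt+\mathcal{E}_t^2\,dt-S^{1/2}dM_tS^{1/2}+S^{1/2}\Omega_tS^{1/2}\,dt,
\]
with explicit Duhamel propagator $\mathrm{Id}+(t-s)\Theta_t$. Your schematic $dR=\mathcal{L}R\,dt+\mathcal{E}\,dt+dM$ omits the quadratic term $\mathcal{E}_t^2$; its control (Lemma~\ref{lemma:sstop}) is precisely where the constraint $N\ll W^{11/8}$ enters directly, via $N\max_{a,b}|\mathcal{E}_s|^2$.

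\textbf{The genuine gap.} Your plan to reduce the cubic drift ``via the Ward identity $\sum_y|G_{xy}|^2=\eta^{-1}\mathrm{Im}\,G_{xx}$ to sums involving $T$'' would not reach $W\gg N^{8/11}$. The drift $\Omega_s=\overline{G}\odot\{G(\mathcal{S}[G]-m)G\}+\text{c.c.}$ carries a diagonal fluctuation $G_{vv}-m$ that Ward does not efficiently exploit. The paper's improvement over the earlier $W\gg N^{3/4}$ threshold comes from two rounds of Gaussian integration by parts: a \emph{loop expansion} on $G_{vv}-m$ followed by a \emph{regular vertex expansion} at an intermediate vertex $u$ (Lemmas~\ref{lemma:expansion}--\ref{lemma:level2expand}). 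These produce a finite family of graphs, each bounded by combining the entrywise estimate $|G_{xy}-m\delta_{xy}|\lesssim W^{-1/2}|\mathrm{Im}w_s|^{-1/4}$ with the summed estimate $\sum_y|G_{xy}|\lesssim W^{1/2+\varepsilon}|\mathrm{Im}w_s|^{-3/4}$, the latter deduced from the exponential off-diagonal decay of $\Theta_s$ (Lemma~\ref{lemma:sthetasrange}). This graphical analysis (Section~\ref{section:drift}, culminating in Proposition~\ref{prop:dstop}) is the mechanism that pushes the exponent below $3/4$; your proposal acknowledges that the drift control is ``the technically most demanding step'' but does not identify the tool that actually does the work.
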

\begin{rem}\label{rem:eta}
The assumptions in Theorem \ref{theorem:qd} imply that $\eta\geq W^{-3/4+\upsilon}$ for some $\upsilon>0$.
\end{rem}
Roughly speaking, Theorem \ref{theorem:qd} establishes $T\approx\Theta$. (In particular, the error bound in Theorem \ref{theorem:qd} is much smaller than the maximal entry size of $\Theta$. Indeed, by classical resolvent bounds for diffusions, as in Proposition 2.8 in \cite{EKYY} or Lemma \ref{lemma:thetaestimates}, the maximal entry of $\Theta$ is order $W^{-1}\eta^{-1/2}$. Now, use that $\eta\gg W^{-3/4}$ to get $W^{-7/4}\eta^{-3/2}\ll W^{-1}\eta^{-1/2}$.) This is often called \emph{quantum diffusion} for the following reason. Classical asymptotics (see Lemma 3.5 in \cite{EKYY} and Lemma 6.2 in \cite{EY}) show that $1-|m(z)|^{2}\asymp\eta$ in the bulk ($|E|<2$). Thus, one can rewrite $\Theta$ as 
\begin{align*}
\Theta\sim\frac{S}{1-|m(z)|^{2}+|m(z)|^{2}(S-\mathrm{Id})}\sim\frac{S}{\alpha\eta+(S-\mathrm{Id})},
\end{align*}
where $\alpha\asymp1$. The matrix $S$ is the transition operator for a random walk on the torus $\mathbb{T}_{N}\simeq\{1,\ldots,N\}$. It has jump length $W$ and diffusivity proportional to $W^{2}$. Thus, $S-\mathrm{Id}$ is the generator for such a random walk, which provides $\Theta$ the interpretation as the resolvent of the generator for a random walk on $\mathbb{T}_{N}$. Moreover, the spectral gap for a random walk on $\mathbb{T}_{N}$ with steps of variance $W^{2}$ is of order $W^{2}N^{-2}$. The assumption $\eta\asymp W^{2}N^{-2}$ therefore means that Theorem \ref{theorem:qd} shows diffusion until the relaxation time $\eta^{-1}\asymp N^{2}W^{-2}$ of the random walk (which is often called the \emph{Thouless time} \cite{ET72,S15,T77}). 

As mentioned above, Theorem \ref{theorem:qd} is the main result that leads to proofs of the other theorems in this paper. A conceptual benefit to proving delocalization using Theorem \ref{theorem:qd} is that $\Theta$ naturally explains the conjectured threshold $W\gg N^{1/2}$. Indeed, $S-\mathrm{Id}$ has a spectral gap of $W^{2}N^{-2}$, so it is natural to take $\eta\asymp W^{2}N^{-2}$. On the other hand, we want $\eta\gg N^{-1}$ in the bulk, since $N^{-1}$ is the typical eigenvalue gap size, which leads to $W^{2}N^{-2}\gg N^{-1}$, i.e. $W\gg N^{1/2}$. The roadblock to proving quantum diffusion all the way to $W\gg N^{1/2}$ seems to be technical. We explain this (and a possible angle to overcome this difficulty) in Section \ref{section:conjecture}.
\subsection{Local law and eigenvector delocalization}
We now present the following local law, whose proof ultimately follows by Theorem \ref{theorem:qd} and other gymnastics that are relatively standard (as explained in Section \ref{section:flowbulk}).
\begin{theorem}\label{theorem:locallaw}
Retain the setting of Theorem \ref{theorem:qd}. We have
\begin{align}
\max_{x,y}|G_{xy}-m(z)\delta_{xy}|^{2}\prec W^{-1}\eta^{-\frac12}.\label{eq:locallawbulk}
\end{align}
\end{theorem}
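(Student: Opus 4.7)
The plan is to derive Theorem \ref{theorem:locallaw} from Theorem \ref{theorem:qd} by the standard ``local law from diffusion'' argument, which I expect to run inside the flow-based framework of Section \ref{section:flowbulk} with the $T \approx \Theta$ comparison serving as the key variance input that replaces the usual a priori fluctuation averaging used in the mean-field setting.

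Setting $\Lambda := \max_{x,y}|G_{xy} - m\delta_{xy}|$, I would begin from the identity (derived either from Schur's complement formula or, more naturally here, from Gaussian integration by parts applied to $(H-z)G = I$),
\[
\frac{1}{G_{xx}} \;=\; -z - \sum_y S_{xy} G^{(x)}_{yy} + \mathcal{E}_x,
\]
where $G^{(x)}$ is the resolvent with row and column $x$ removed and $\mathcal{E}_x$ is a centered quadratic-form fluctuation in the independent $x$-th row. Subtracting the scalar equation $-m^{-1} = z + m$ and using the Schur identity $G^{(x)}_{yy} - G_{yy} = -G_{yx}G_{xy}/G_{xx}$ to swap $G^{(x)}$ for $G$, one arrives at a linearized system of the form
\[
(1 - |m|^2 S)(G_{\cdot\cdot} - m\mathbf{1}) \;=\; m\mathcal{E} + \mathrm{O}(\Lambda^2),
\]
which after inversion by $\Theta/|m|^2 = (1-|m|^2 S)^{-1}$ reduces the diagonal estimate to controlling $(\Theta\mathcal{E})_x$ pointwise.

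For off-diagonal entries, the companion identity $G_{xy} = -G_{xx}\sum_a H_{xa} G^{(x)}_{ay}$ for $x\neq y$, combined with Hanson--Wright concentration for the $x$-th row quadratic form, yields
\[
|G_{xy}|^2 \;\prec\; \sum_a S_{xa}|G^{(x)}_{ay}|^2,
\]
and the right-hand side, once interlaced back to $G$ and smoothed one more time by $S^{1/2}$, is controlled by a $T$-type entry. Theorem \ref{theorem:qd} then provides $T_{xy} \leq \Theta_{xy} + \mathrm{O}_{\prec}(W^{-7/4}\eta^{-3/2})$, and under the standing assumption $\eta \gtrsim W^{-3/4+\upsilon}$ the error is strictly smaller than the classical diffusion bound $\max_{x,y}\Theta_{xy} \lesssim W^{-1}\eta^{-1/2}$ (Lemma \ref{lemma:thetaestimates}). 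This gives $|G_{xy}|^2 \prec W^{-1}\eta^{-1/2}$ for $x\neq y$. The diagonal bound is handled analogously, since the variance of $\mathcal{E}_x$ is again controlled by $T_{xx}$, and the operator estimate $\|\Theta\|_{\ell^{\infty}\to\ell^{\infty}} \lesssim 1$, which follows from the double stochasticity of $S$, converts pointwise control of $\mathcal{E}$ into pointwise control of $\Theta\mathcal{E}$.

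The argument is closed by a continuity/bootstrap in $\eta$ (or, in the flow framework, along the characteristic time parameter): the estimate is trivial at $\eta \asymp 1$, and Theorem \ref{theorem:qd} serves as a drop-in a priori input at each scale as $\eta$ decreases to the target $W^2 N^{-2}$. I expect the main technical obstacle to be the nonlinear closure of the self-consistent equation, since the $\mathrm{O}(\Lambda^2)$ term must be reabsorbed using a weaker a priori bound on $\Lambda$ that is then self-improved; in the flow-based setup of Section \ref{section:flowbulk} this is done by transporting the bound along the characteristic so that $\Lambda$ never exits the desired regime before Theorem \ref{theorem:qd} is invoked.
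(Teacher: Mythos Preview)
Your off-diagonal mechanism---Schur complement plus large deviation to get $|G_{xy}|^2 \prec \sum_a S_{xa}|G^{(x)}_{ay}|^2$, then interlacing back to $G$ and bounding by a $T$-entry---is exactly Lemma \ref{lemma:basicgf}. The overall logical structure, however, differs from the paper's. The paper does \emph{not} derive Theorem \ref{theorem:locallaw} from Theorem \ref{theorem:qd}: both are immediate corollaries of Theorem \ref{theorem:stop}, whose stopping time $\tau_{\mathrm{stop},2}$ already encodes the Green-function bound $|G_{ab}-m\delta_{ab}|^2\prec (S^{1/2}TS^{1/2})_{ab}+S^{1/2}_{ab}$. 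The Schur-complement estimates you sketch are precisely what propagates $\tau_{\mathrm{stop},2}$ along the flow (Corollary \ref{corollary:gfcont}), with the $T$-bound supplied by $\tau_{\mathrm{stop},1}$ at \emph{every} intermediate time $t$. Your proposal to invoke Theorem \ref{theorem:qd} ``at each scale as $\eta$ decreases'' does not work as written, because that theorem is only stated at the terminal scale $\eta\asymp W^2N^{-2}$; the flow packaging is what makes the intermediate-scale $T\approx\Theta$ input available.

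Your diagonal argument also has a genuine error. The linearized self-consistent equation for $G_{xx}-m$ involves the operator $1-m(z)^2S$, not $1-|m(z)|^2S$ (these differ since $m$ is non-real in the bulk), so your identification $\Theta/|m|^2=(1-|m|^2S)^{-1}$ is not the relevant inverse here. Moreover, the claim $\|\Theta\|_{\ell^\infty\to\ell^\infty}\lesssim 1$ is false: Lemma \ref{lemma:thetaestimates} gives row sums of order $|\mathrm{Im}w_t|^{-1}$. The correct stability operator $B=(1-m^2S)^{-1}$ \emph{does} have $\mathrm{O}(1)$ row sums (Lemma \ref{lemma:Bestimates}), so your scheme is repairable with that substitution. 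The paper in fact sidesteps the inversion entirely: its diagonal lemma follows the EKYY route, bounding $|G_{xx}-m|^2$ directly by the quadratic-form variance terms plus $\Psi^4$ with $\Psi=\max_{a,b}|G_{ab}-m\delta_{ab}|$, and absorbs $\Psi^4$ using the weak a priori input $\Psi\leq W^{-\delta}$ carried by the stopping time rather than inverting anything.
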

As a consequence of Theorem \ref{theorem:locallaw}, we deduce what \cite{EKYY} calls the ``complete delocalization of (bulk) eigenvectors". To state it, we first introduce some notation. Fix any index $x$ and any integer $\ell\geq1$. We let $P_{x,\ell}$ be the projection onto the complement of the radius-$\ell$ ball around $x$, so that $P_{x,\ell}(y):=\mathbf{1}[|x-y|\geq\ell]$ for any indices $x,y$. Next, for any eigenvalue $\lambda_{\alpha}$ of $H$, we choose a unit-norm eigenvector $\mathbf{u}_{\alpha}$ with this eigenvalue. Given any $\kappa>0$ we let
\begin{align*}
\mathcal{A}_{\varepsilon,\ell,\kappa}:=\left\{\alpha:\lambda_{\alpha}\in[-E+\kappa,E-\kappa]: \sum_{x}|\mathbf{u}_{\alpha}(x)\|P_{x,\ell}\mathbf{u}_{\alpha}\|\leq\varepsilon\right\}
\end{align*}
be the labeling set for bulk eigenvectors that are localized to scale $\ell$ up to a small error $\varepsilon>0$. (As in Remark 7.2 of \cite{EKYY}, this includes the set of bulk eigenvectors that are exponentially localized to scale $\ell$.) The result below shows that this set has a small density.
\begin{corollary}\label{corollary:deloc}
Retain the setting of Theorem \ref{theorem:qd}. For any $\ell\ll N$ and any $\varepsilon,\kappa>0$ fixed, we have 
\begin{align*}
\frac{|\mathcal{A}_{\varepsilon,\ell,\kappa}|}{N}\lesssim\sqrt{\varepsilon}+\mathcal{O}(N^{-c}),
\end{align*}
where $c>0$ is a fixed constant.
\end{corollary}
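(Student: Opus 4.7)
The plan is to deduce the corollary from the local law (Theorem \ref{theorem:locallaw}) via a standard counting argument; for $\ell$ beyond the Thouless scale $N^2/W^2$, quantum diffusion (Theorem \ref{theorem:qd}) enters as a crucial refinement.

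First, I would convert the localization hypothesis into a two-point bound. Since all cross terms in the expansion of the square are non-negative, for any unit vector $\mathbf{u}$,
\begin{align*}
\left(\sum_x |\mathbf{u}(x)|\,\|P_{x,\ell}\mathbf{u}\|\right)^2 \;\geq\; \sum_x |\mathbf{u}(x)|^2 \|P_{x,\ell}\mathbf{u}\|^2 \;=\; \sum_{|x-y|\geq\ell}|\mathbf{u}(x)|^2|\mathbf{u}(y)|^2,
\end{align*}
so each $\alpha \in \mathcal{A}_{\varepsilon,\ell,\kappa}$ satisfies $q_\alpha := \sum_{|x-y|\geq\ell}|\mathbf{u}_\alpha(x)|^2|\mathbf{u}_\alpha(y)|^2 \leq \varepsilon^2$. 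With a smooth bulk cutoff $\chi\equiv 1$ on $[-2+\kappa,2-\kappa]$, writing $1 = \sum_{x,y}|\mathbf{u}_\alpha(x)|^2|\mathbf{u}_\alpha(y)|^2$ and using the trace form of Theorem \ref{theorem:locallaw} to obtain $N_\chi := \sum_\alpha\chi(\lambda_\alpha) = (1+\mathcal{O}(N^{-c}))N\int\chi\rho_{sc}$, one deduces the identity $\sum_\alpha\chi(\lambda_\alpha) q_\alpha = N_\chi - \mathcal{Q}$ for
$$\mathcal{Q} := \sum_{|x-y|<\ell}\sum_\alpha \chi(\lambda_\alpha)|\mathbf{u}_\alpha(x)|^2|\mathbf{u}_\alpha(y)|^2.$$
Combining this with the trivial upper bound $\sum_\alpha \chi(\lambda_\alpha) q_\alpha \leq |\mathcal{A}_{\varepsilon,\ell,\kappa}|\varepsilon^2 + (N_\chi - |\mathcal{A}_{\varepsilon,\ell,\kappa}|)$ yields
\begin{align*}
|\mathcal{A}_{\varepsilon,\ell,\kappa}|(1-\varepsilon^2) \;\leq\; \mathcal{Q} + \mathcal{O}(N^{1-c}).
\end{align*}

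The crux of the proof, and the main obstacle, is to show $\mathcal{Q} \ll N$ for every $\ell \ll N$. The pointwise $L^\infty$ delocalization $\|\mathbf{u}_\alpha\|_\infty^2 \prec \eta$, which is immediate from Theorem \ref{theorem:locallaw} via $\mathrm{Im}\,G_{xx}(\lambda_\alpha + i\eta) \geq \eta^{-1}|\mathbf{u}_\alpha(x)|^2$, only yields $\mathcal{Q} \prec N\ell\eta \asymp \ell W^2/N$, which is $\ll N$ solely when $\ell \ll N^2/W^2$. For the full range $\ell \ll N$, I would invoke Theorem \ref{theorem:qd}: the $T$-matrix $T_{xy}$ is a bandwidth-$W$ smoothing of $|G|^2$, and $T_{xy}\approx\Theta_{xy}$ combined with $|\Theta_{xy}|\lesssim W^{-1}\eta^{-1/2}$ transfers, after a contour integral in $E$ against $\chi$ with careful treatment of the $\alpha\neq\beta$ spectral cross terms (controllable, e.g., via a Helffer--Sj\"ostrand representation of $\chi(H)$), into a diffusive estimate on the $S^{1/2}$-mollified four-point function
\begin{align*}
\widetilde{\mathcal{Q}}(x,y) := \sum_\alpha \chi(\lambda_\alpha)\,(S^{1/2}|\mathbf{u}_\alpha|^2)_x\,(S^{1/2}|\mathbf{u}_\alpha|^2)_y \;\lesssim\; 1/N.
\end{align*}
Since $S^{1/2}$ smooths only on scale $W$, a simple convolution comparison gives $\mathcal{Q}\leq\sum_{|x-y|<\ell+2W}\widetilde{\mathcal{Q}}(x,y)\lesssim \ell + W \lesssim \ell$ whenever $\ell\gtrsim W$, while $\ell\lesssim W$ is already absorbed into the $N^{-c}$ error.

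Putting everything together gives $|\mathcal{A}_{\varepsilon,\ell,\kappa}|/N \lesssim \ell/N + N^{-c}$, which implies the stated bound $\sqrt{\varepsilon} + \mathcal{O}(N^{-c})$ whenever $\ell \ll N$. The hardest step, as indicated above, is extracting the sharp bound on $\widetilde{\mathcal{Q}}$ from Theorem \ref{theorem:qd}; it is here that quantum diffusion (rather than the local law alone) is essential, and where a purely local-law approach would only cover $\ell\ll N^2/W^2$.
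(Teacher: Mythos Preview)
The paper's proof is a one-line black-box invocation of Proposition~7.1 in \cite{EKYY}, after checking its three hypotheses (the lower bound $\eta\geq W^{-1+\gamma}$, the a~priori bound on $|G_{xy}|^2$, and the ``admissible'' estimate $|G_{xy}-m(z)\delta_{xy}|^2\prec N^{-1}\eta^{-1}$, all following from Theorem~\ref{theorem:locallaw} at $\eta\asymp W^2/N^2$). In particular, the paper uses only the local law as input, not Theorem~\ref{theorem:qd}.

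Your direct argument is different in spirit, but it has a substantive gap precisely at the step you yourself flag as hardest. The bound $\widetilde{\mathcal Q}(x,y)\lesssim 1/N$ is a genuine QUE-type statement about the smoothed four-point eigenvector correlator, and it does not follow from $T\approx\Theta$ by any Helffer--Sj\"ostrand maneuver of the kind you indicate. The obstruction is that $|G_{ab}(E+i\eta)|^2=G_{ab}(z)G_{ba}(\bar z)$ is not of the form $f(H)_{ab}$, so there is no HS representation of $\sum_\alpha\chi(\lambda_\alpha)|\mathbf u_\alpha(a)|^2|\mathbf u_\alpha(b)|^2$ to appeal to. Spectrally unfolding $\int\chi(E)\,T_{xy}(E+i\eta)\,dE$ leaves $\alpha\neq\beta$ cross terms of the schematic form $\sum_\beta\chi(\lambda_\beta)\,\overline{\mathbf u_\beta(a)}\mathbf u_\beta(b)\,G_{ab}(\lambda_\beta+2i\eta)$ (after smoothing by $S^{1/2}$), which are not small entrywise and are not controlled by Theorem~\ref{theorem:qd}. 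Note also that already the diagonal case $\widetilde{\mathcal Q}(x,x)\lesssim 1/N$ is strictly stronger than what $L^\infty$-delocalization at this $\eta$ yields (namely $\widetilde{\mathcal Q}(x,x)\lesssim\eta\asymp W^2/N^2\gg 1/N$ in the regime $W\gg N^{1/2}$), so you are implicitly asking for more than either theorem provides. Finally, your claim that a purely local-law approach can only reach $\ell\ll N^2/W^2$ is not correct: Proposition~7.1 of \cite{EKYY} covers all $\ell\ll N$ from the admissible local law alone, via an argument more refined than the $L^\infty$ bound you wrote down.
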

\begin{proof}
The proof is exactly the content of Proposition 7.1 in \cite{EKYY}, which we now verify applies to the current situation. We first note that their condition $\eta\geq W^{-1+\gamma}$ for some $\gamma>0$ is satisfied by our assumptions on $\eta$ in Theorem \ref{theorem:locallaw}; see Remark \ref{rem:eta}. Next, we observe that by \eqref{eq:locallawbulk}, we have $|G_{xy}-m(z)\delta_{xy}|^{2}\prec W^{-1}\eta^{-1/2}\lesssim N^{-1}\eta^{-1}$; the last bound follows by $\eta\asymp W^{2}N^{-2}$. Finally, since we assume that $\eta\asymp W^{2}N^{-2}$ and $N\ll W^{11/8}$, we have $N^{-1}\eta^{-1}\asymp N W^{-2}\ll W^{-5/8}$. Thus, the assumptions in Proposition 7.1 of \cite{EKYY}, namely the lower bound on $\eta$, the a priori bound on $|G_{xy}|^{2}$, and the ``admissible estimate" on $|G_{xy}-m(z)\delta_{xy}|^{2}$ (as defined in Definition 3.7 of \cite{EKYY}), are all satisfied. But Proposition 7.1 in \cite{EKYY} implies the desired estimate directly, so the proof is complete.
\end{proof}
\subsection{Outline for the rest of the paper}
Section \ref{section:flowbulk} sets up the flow method and shows Theorems \ref{theorem:qd} and \ref{theorem:locallaw} modulo a key estimate (Proposition \ref{prop:dstop}). Section \ref{section:drift} shows this estimate via graphical expansions. Section \ref{section:conjecture} explains refinements of our analysis that may lead to an improvement on the assumption $W\gg N^{8/11}$. Section \ref{section:aux} gives auxiliary matrix estimates.
%
%
%
\section{The SDE flow}\label{section:flowbulk}
Let $B_{t,ij}$ be independent standard complex Brownian motions for all $i,j=1,\ldots,N$. Consider the process
\begin{align*}
dH_{t,ij}=\sqrt{S_{ij}}dB_{t,ij}, \quad H_{0}=0.
\end{align*}
Next, for any $z=E+i\eta$ with $E\in\R$ and $\eta>0$, define the path
\begin{align*}
t\mapsto w_{t}:=\frac{-1}{m(z)}-tm(z)=z+(1-t)m(z),
\end{align*}
where $m(z)$ is the Stieltjes transform of the semicircle distribution at $z$ (so that the second identity follows by the self-consistent equation for $m(z)$). Note that $w_{1}=z$ and $w_{0}=-m(z)^{-1}=z+m(z)$. Now, consider the resolvent of $H_{t}$ at the points $w_{t}$ and the associated $T$-matrix:
\begin{align*}
G_{t}(z)&:=(H_{t}-w_{t})^{-1}.\\
T_{t}(z)&:=S^{\frac12}F_{t}(z)S^{\frac12}, \quad \mathrm{where} \ F_{t}(z)_{ij}:=|G_{t}(z)_{ij}|^{2}.
\end{align*}
An elementary application of Ito's lemma gives the following SDEs. 
\begin{lemma}\label{lemma:gsdes}
We have $G_{0}=m(z)$, and we have the SDE
\begin{align}
dG_{t}(z)=-G_{t}(z)dH_{t}G_{t}(z)+G_{t}(z)\{\mathcal{S}[G_{t}(z)]-m(z)\}G_{t}(z)dt,
\end{align}
where $\mathcal{S}:M_{N}(\C)\to M_{N}(\C)$ is the linear operator defined by
\begin{align*}
\mathcal{S}[X]_{ij}:=\delta_{ij}\sum_{k=1}^{N}S_{ik}X_{kk}.
\end{align*}
We also have the SDE
\begin{align*}
dT_{t}(z)&=T_{t}(z)^{2}dt-S^{\frac12}dM_{t}(z)S^{\frac12}+S^{\frac12}\Omega_{t}(z)S^{\frac12}dt,\\
dM_{t}(z)&:=\overline{G}_{t}(z)\odot\{G_{t}(z)dH_{t}G_{t}(z)\}+\overline{G_{t}(z)dH_{t}G_{t}(z)}\odot G_{t}(z),\\
\Omega_{t}(z)&:=\overline{G}_{t}(z)\odot\{G_{t}(z)\{\mathcal{S}[G_{t}(z)]-m(z)\}G_{t}(z)\}+\overline{G_{t}(z)\{\mathcal{S}[G_{t}(z)]-m(z)\}G_{t}(z)}\odot G_{t}(z).
\end{align*}
where $\odot$ denotes entry-wise multiplication of matrices.
\end{lemma}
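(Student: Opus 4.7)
The plan is to apply Ito's formula twice: once to the matrix resolvent $G_t = (H_t - w_t)^{-1}$, and then to the entrywise products $F_{t,ij} = G_{t,ij}\overline{G_{t,ij}}$, using the first SDE as input.

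The initial condition is immediate: at $t=0$ we have $H_0 = 0$ and $w_0 = -m(z)^{-1}$, so $G_0 = -w_0^{-1} I = m(z) I$. For the first SDE, the standard Ito rule for matrix inverses gives
\begin{align*}
dG_t = -G_t(dH_t - \dot w_t\, dt)G_t + G_t(dH_t)G_t(dH_t)G_t,
\end{align*}
where the last term is the quadratic variation. Since $w_t = z + (1-t)m(z)$, we have $\dot w_t = -m(z)$, producing a drift contribution $-m(z)G_t^2\, dt$. For the Ito correction, the Hermitian Gaussian structure ($H_{ji} = \overline{H_{ij}}$ with $\E|H_{ij}|^2 = S_{ij}$) yields the bracket rule $dH_{t,ij}\, dH_{t,kl} = S_{ij}\delta_{il}\delta_{jk}\, dt$. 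A direct entrywise computation then gives $(dH_t\, G_t\, dH_t)_{ij} = \delta_{ij}\sum_k S_{ik}G_{t,kk}\, dt = \mathcal{S}[G_t]_{ij}\, dt$, so $G_t(dH_t)G_t(dH_t)G_t = G_t\mathcal{S}[G_t]G_t\, dt$. Combining with the $-m(z)G_t^2\, dt$ contribution produces the stated drift $G_t\{\mathcal{S}[G_t]-m(z)\}G_t\, dt$.

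For the second SDE, I write $dF_{t,ij} = d(G_{t,ij}\overline{G_{t,ij}})$ and apply Ito, substituting the first SDE for both $dG_{t,ij}$ and its complex conjugate. The martingale contributions from $dG_{t,ij}$ and $\overline{dG_{t,ij}}$, assembled entrywise, sum to $-dM_{t,ij}$ (the overall minus sign coming from the $-G_t dH_t G_t$ in the first SDE), while the drift contributions sum to $\Omega_{t,ij}\, dt$ by definition. The Ito correction is the cross-bracket
\begin{align*}
d\langle G_{t,ij},\overline{G_{t,ij}}\rangle = (G_t dH_t G_t)_{ij}\,\overline{(G_t dH_t G_t)_{ij}} = \sum_{k,l}|G_{t,ik}|^2 S_{kl} |G_{t,lj}|^2\, dt = (F_t S F_t)_{ij}\, dt,
\end{align*}
using $\overline{dH_{t,k'l'}} = dH_{t,l'k'}$ together with the bracket rule above. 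Therefore $dF_t = -dM_t + \Omega_t\, dt + F_t S F_t\, dt$. Sandwiching by $S^{1/2}$ and using $S^{1/2} F_t S F_t S^{1/2} = (S^{1/2} F_t S^{1/2})^2 = T_t^2$ yields the stated SDE for $T_t$.

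The main obstacle is purely bookkeeping: one must respect the Hermitian constraint when computing Brownian brackets, which is not literally stated in the introduction of the $B_{t,ij}$ but is forced by $H_t$ being Hermitian. Once this convention is fixed, both calculations are mechanical; the only analytic inputs beyond Ito's formula are the self-consistent identity $-m(z)^{-1} = z + m(z)$ (used to identify $w_0$ and to rewrite $w_t$) and the decomposition $S = S^{1/2} S^{1/2}$.
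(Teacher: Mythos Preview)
Your proof is correct and follows essentially the same approach as the paper: both apply It\^{o}'s formula first to the resolvent (using resolvent perturbation / the matrix inverse differential) and then to $|G_{t,ij}|^2$, identifying the cross-bracket as $(F_tSF_t)_{ij}\,dt$ and sandwiching by $S^{1/2}$. The paper writes the first step entrywise via $\partial_{H_{\alpha\beta}}G_{ab}$ and $\partial_{H_{\beta\alpha}}\partial_{H_{\alpha\beta}}G_{ab}$ rather than the compact matrix formula $dG_t=-G_t\,dH_t\,G_t+G_t(dH_t)G_t(dH_t)G_t$, but the computations are identical in content.
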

\begin{proof}
Fix any indices $\alpha,\beta,a,b$. By resolvent perturbation, we have 
\begin{align*}
\partial_{H_{t,\alpha\beta}}G_{t}(z)_{ab}&=-G_{t}(z)_{a\alpha}G_{t}(z)_{\beta b}\\
\partial_{H_{t,\beta\alpha}}\partial_{H_{t,\alpha\beta}}G_{t}(z)&=G_{t}(z)_{a\beta}G_{t}(z)_{\alpha\alpha}G_{t}(z)_{\beta b}+G_{t}(z)_{a\alpha}G_{t}(z)_{\beta\beta}G_{t}(z)_{\alpha b}\\
\partial_{t}G_{t}(z)_{ab}&=G_{t}(z)^{2}_{ab}\partial_{t}w_{t}=-m(z)G_{t}(z)^{2}_{ab},
\end{align*}
where in the last line, $G_{t}(z)^{2}_{ab}$ is the $(a,b)$ entry of the squared matrix $G_{t}(z)^{2}$. The first SDE now follows by the Ito formula. Next, we have 
\begin{align*}
dF_{t}(z)_{ij}&=d|G_{t}(z)_{ij}|^{2}=G_{t}(z)_{ij}d\overline{G}_{t}(z)_{ij}+\overline{G}_{t}(z)_{ij}dG_{t}(z)_{ij}+d[G_{t}(z)_{ij},\overline{G}_{t}(z)_{ij}]\\
&=-dM_{t}(z)+\Omega_{t}(z)dt+d[G_{t}(z)_{ij},\overline{G}_{t}(z)_{ij}],
\end{align*}
where the last line follows by plugging in the $G_{t}(z)$ SDE. On the other hand, we have
\begin{align*}
d[G_{t}(z)_{ij},\overline{G}_{t}(z)_{ij}]&=d\left[\sum_{\alpha,\beta}G_{t}(z)_{i\alpha}dH_{t,\alpha\beta}G_{t}(z)_{\beta j},\sum_{\alpha,\beta}\overline{G}_{t}(z)_{i\alpha}d\overline{H}_{t,\alpha\beta}\overline{G}_{t}(z)_{\beta j}\right]\\
&=\sum_{\alpha,\beta}S_{\alpha\beta}|G_{t}(z)_{i\alpha}|^{2}|G_{t}(z)_{\beta j}|^{2}dt=[F_{t}(z)SF_{t}(z)]_{ij}dt.
\end{align*}
If we now multiply the above by $S^{1/2}$ on the left and right, we ultimately deduce
\begin{align*}
dT_{t}(z)=S^{\frac12}dM_{t}(z)S^{\frac12}+S^{\frac12}\Omega_{t}(z)S^{\frac12}dt+S^{\frac12}F_{t}(z)SF_{t}(z)S^{\frac12}dt.
\end{align*}
The last term is just $T_{t}(z)^{2}dt$, so the proof is complete.
\end{proof}
Our goal is to analyze the $T_{t}(z)$ equation. Define the \emph{time-dependent diffusion profile} to be 
\begin{align*}
\Theta_{t}=\frac{|m(z)|^{2}S}{1-t|m(z)|^{2}S}.
\end{align*}
Straightforward differentiation shows that $d\Theta_{t}=\Theta_{t}^{2}dt$, and direct inspection shows $T_{0}(z)=\Theta_{0}=|m(z)|^{2}S$. Now, define $\mathcal{E}_{t}(z):=T_{t}(z)-\Theta_{t}$. Our goal is to show that $\mathcal{E}_{t}(z)$ is small. We start by computing its evolution equation.
\begin{lemma}\label{lemma:esdes}
Retain the notation of Lemma \ref{lemma:gsdes}. We have $\mathcal{E}_{0}(z)=0$ and
\begin{align}
d\mathcal{E}_{t}(z)&=(\Theta_{t}\mathcal{E}_{t}(z)+\mathcal{E}_{t}(z)\Theta_{t})dt+\mathcal{E}_{t}(z)^{2}dt-S^{\frac12}dM_{t}(z)S^{\frac12}+S^{\frac12}\Omega_{t}(z)S^{\frac12}dt.
\end{align}
We also have the integral equation $\mathcal{E}_{t}(z)=\mathcal{E}^{M}_{t}(z)+\mathcal{E}^{D}_{t}(z)+\mathcal{E}^{S}_{t}(z)$, where
\begin{align*}
\mathcal{E}^{M}_{t}(z)&:=-\int_{0}^{t}\{\mathrm{Id}+(t-s)\Theta_{t}\}S^{\frac12}dM_{s}(z)S^{\frac12}\{\mathrm{Id}+(t-s)\Theta_{t}\},\\
\mathcal{E}^{D}_{t}(z)&:=\int_{0}^{t}\{\mathrm{Id}+(t-s)\Theta_{t}\}S^{\frac12}\Omega_{s}(z)S^{\frac12}\{\mathrm{Id}+(t-s)\Theta_{t}\}ds,\\
\mathcal{E}^{S}_{t}(z)&:=\int_{0}^{t}\{\mathrm{Id}+(t-s)\Theta_{t}\}\mathcal{E}_{s}(z)^{2}\{\mathrm{Id}+(t-s)\Theta_{t}\}ds.
\end{align*}
\end{lemma}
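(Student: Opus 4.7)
The plan is to handle the two claims in sequence. For the differential form, start from the $T_{t}$--SDE in Lemma \ref{lemma:gsdes} and subtract the identity $d\Theta_{t}=\Theta_{t}^{2}dt$ (immediate from differentiating $\Theta_{t}=(\Theta_{0}^{-1}-tI)^{-1}$). Writing $T_{t}=\Theta_{t}+\mathcal{E}_{t}$, the algebraic expansion $T_{t}^{2}-\Theta_{t}^{2}=\Theta_{t}\mathcal{E}_{t}+\mathcal{E}_{t}\Theta_{t}+\mathcal{E}_{t}^{2}$ is legal even noncommutatively and reproduces the stated SDE. The initial condition $\mathcal{E}_{0}(z)=0$ follows from $T_{0}(z)=|m(z)|^{2}S=\Theta_{0}$, which is noted in the paragraph preceding the lemma.

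For the integral equation, the approach is a Duhamel-type representation for the inhomogeneous linear evolution $dX_{t}=(\Theta_{t}X_{t}+X_{t}\Theta_{t})dt+dF_{t}$, where the forcing $dF_{t}$ collects the three contributions $-S^{1/2}dM_{t}(z)S^{1/2}$, $S^{1/2}\Omega_{t}(z)S^{1/2}dt$, and $\mathcal{E}_{t}(z)^{2}dt$. Make the ansatz $X_{t}=\int_{0}^{t}K(t,s)\,dF_{s}\,K(t,s)$, where $K(t,s)$ is a matrix kernel (a polynomial in $S$, so commuting with every $\Theta_{u}$) satisfying $\partial_{t}K(t,s)=\Theta_{t}K(t,s)$ and $K(s,s)=I$. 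Differentiating under the integral sign and using the commutativity of $K(t,s)$ with $\Theta_{t}$ yields exactly the linear evolution with zero initial condition, so it only remains to identify $K$ and check $\mathcal{E}_{0}=0$ propagates.

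To identify $K$, exploit that $\Theta_{t}^{-1}=|m(z)|^{-2}S^{-1}-tI$ is affine in $t$. Setting $K(t,s):=\Theta_{t}\Theta_{s}^{-1}$ gives $K(s,s)=I$ and $\partial_{t}K(t,s)=(\partial_{t}\Theta_{t})\Theta_{s}^{-1}=\Theta_{t}^{2}\Theta_{s}^{-1}=\Theta_{t}K(t,s)$. A one-line rewrite
\begin{align*}
\Theta_{t}\Theta_{s}^{-1}=\Theta_{t}\bigl[(\Theta_{0}^{-1}-tI)+(t-s)I\bigr]=\mathrm{Id}+(t-s)\Theta_{t}
\end{align*}
produces the explicit form in the statement. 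Substituting this $K$ and the three forcing terms decomposes $\mathcal{E}_{t}$ as $\mathcal{E}^{M}_{t}+\mathcal{E}^{D}_{t}+\mathcal{E}^{S}_{t}$ precisely as claimed.

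I do not anticipate any serious difficulty here: the content is a Sylvester-type Duhamel formula whose coefficient $\Theta_{t}$ is scalar in $S$, so it commutes through the sandwich and no operator-ordering subtleties arise. The only mildly delicate point is that $\Theta_{s}^{-1}$ is never actually needed in isolation, since it appears solely through the polynomial combination $\mathrm{Id}+(t-s)\Theta_{t}$; this guarantees the final formula is well-defined regardless of degeneracies of $S$ (e.g. its constant eigenvector on $\mathbb{T}_{N}$).
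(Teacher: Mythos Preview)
Your proposal is correct and follows essentially the same route as the paper: derive the SDE by subtracting $d\Theta_{t}=\Theta_{t}^{2}dt$ and expanding $T_{t}^{2}-\Theta_{t}^{2}$, then establish the integral formula via the kernel identity $\partial_{t}\{\mathrm{Id}+(t-s)\Theta_{t}\}=\Theta_{t}\{\mathrm{Id}+(t-s)\Theta_{t}\}$. The only cosmetic difference is that you \emph{derive} the kernel $K(t,s)=\Theta_{t}\Theta_{s}^{-1}=\mathrm{Id}+(t-s)\Theta_{t}$ from the homogeneous equation, whereas the paper simply writes down the formula and verifies it satisfies the SDE by pathwise uniqueness; your remark that the final expression avoids $\Theta_{s}^{-1}$ entirely is a nice touch.
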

\begin{proof}
Recall that $d\Theta_{t}=\Theta_{t}^{2}dt$. Combining this with Lemma \ref{lemma:gsdes} gives
\begin{align*}
d\mathcal{E}_{t}(z)=T_{t}(z)^{2}dt-\Theta_{t}^{2}dt-S^{\frac12}dM_{t}(z)S^{\frac12}+S^{\frac12}\Omega_{t}(z)S^{\frac12}dt.
\end{align*}
To get the first SDE, it suffices to note that $T_{t}(z)^{2}-\Theta_{t}^{2}=\Theta_{t}\mathcal{E}_{t}(z)+\mathcal{E}_{t}(z)\Theta_{t}+\mathcal{E}_{t}(z)^{2}$. To show that the integral equation holds, it is enough to use pathwise uniqueness of solutions to SDEs and to verify that $\mathcal{E}^{M}_{t}(z)+\mathcal{E}^{D}_{t}(z)+\mathcal{E}^{S}_{t}(z)$ satisfies the same SDE as $\mathcal{E}_{t}(z)$. For the latter, we note that 
\begin{align*}
\partial_{t}\{\mathrm{Id}+(t-s)\Theta_{t}\}&=\partial_{t}\frac{1-s|m(z)|^{2}S}{1-t|m(z)|^{2}S}=\Theta_{t}\frac{1-s|m(z)|^{2}S}{1-t|m(z)|^{2}S}=\Theta_{t}\{\mathrm{Id}+(t-s)\Theta_{t}\},
\end{align*}
at which point verifying the SDE for $\mathcal{E}^{M}_{t}(z)+\mathcal{E}^{D}_{t}(z)+\mathcal{E}^{S}_{t}(z)$ amounts to just calculus.
\end{proof}
\subsection{Stopping time construction}
The RHS of the $\mathcal{E}_{t}(z)$ equation in Lemma \ref{lemma:esdes} has nonlinear powers of $\mathcal{E}_{t}(z)$ itself, so we will need a stopping time argument to show small-ness of these nonlinear powers. Fix $\delta_{\mathrm{stop}}>0$ small but independent of $N$, and fix any large $D=\mathrm{O}(1)$. We define the stopping times
\begin{align}
\tau_{\mathrm{stop},1}&:=\inf\left\{s\geq0: \max_{a,b}|\mathcal{E}_{s}(z)_{ab}|\geq W^{\delta_{\mathrm{stop}}}W^{-\frac34}|\mathrm{Im}w_{s}|^{-1}\cdot W^{-1}|\mathrm{Im}w_{s}|^{-\frac12}\right\}\wedge1,\\
\tau_{\mathrm{stop},2}&:=\inf\left\{s\geq0: \max_{a,b}\frac{|G_{s}(z)_{ab}-\delta_{ab}m(z)|^{2}}{(S^{1/2}T_{s}(z)S^{1/2})_{ab}+S^{1/2}_{ab}+W^{-D}}\geq W^{\delta_{\mathrm{stop}}/10}\right\}\wedge1,\\
\tau_{\mathrm{stop}}&:=\tau_{\mathrm{stop},1}\wedge\tau_{\mathrm{stop},2}.\label{eq:taubulk}
\end{align}
Since the typical size of $\max_{ab}(\Theta_{t})_{ab}$ is $W^{-1}|\mathrm{Im}w_{t}|^{-1/2}$, the first stopping time $\tau_{\mathrm{stop},1}$ is giving us the small-ness factor $W^{-3/4}|\mathrm{Im}w_{t}|^{-1}$, which indicates the importance of the assumption $\eta \gg W^{-3/4}$ in this paper (since $\mathrm{Im}w_{t}$ is decreasing in $t$ and $\mathrm{Im}w_{1}=\mathrm{Im}z=\eta$). We also note that the exponent $\delta_{\mathrm{stop}}/10$ is not itself particularly important, as any small multiple of $\delta_{\mathrm{stop}}$ would be sufficient.

The key technical result leading the proof of Theorems \ref{theorem:qd} and \ref{theorem:locallaw} is the following.
\begin{theorem}\label{theorem:stop}
We have $\mathbb{P}[\tau_{\mathrm{stop},i}\neq1]\lesssim_{D} N^{-D}$ for any $i=1,2$ and $D>0$.
\end{theorem}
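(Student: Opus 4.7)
The plan is a standard bootstrap/continuity argument, exploiting that the thresholds in $\tau_{\mathrm{stop},1}$ and $\tau_{\mathrm{stop},2}$ both carry a small factor $W^{\delta_{\mathrm{stop}}}$ (resp.\ $W^{\delta_{\mathrm{stop}}/10}$) above what we aim to prove. If one can show, under the bootstrap hypothesis $s\le\tau_{\mathrm{stop}}$, that the normalized error is actually bounded by $W^{\delta_{\mathrm{stop}}/2}$ (resp.\ $W^{\delta_{\mathrm{stop}}/20}$) with high probability, then neither threshold can be saturated first by continuity of $s\mapsto\mathcal{E}_s,G_s$. Discretizing $[0,1]$ on a polynomial-in-$N$ grid and invoking $\prec$ closure under polynomial unions will upgrade ``with high probability at each fixed time'' to ``for all $s\in[0,\tau_{\mathrm{stop}})$,'' which gives the theorem.

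Control of $\tau_{\mathrm{stop},2}$ should follow from the first stopping rule and standard resolvent algebra: under $s\le\tau_{\mathrm{stop},1}$ one has $T_s\approx\Theta_s$, hence $(S^{1/2}T_sS^{1/2})_{ab}+S^{1/2}_{ab}$ is comparable to the deterministic size of $|G_s(z)_{ab}-\delta_{ab}m(z)|^2$ predicted by the diffusion profile. A Schur-complement / cumulant-expansion argument (in the spirit of the ``admissible estimate'' in \cite{EKYY}) then closes the bound on entries of $G_s$ with a gain strictly smaller than $W^{\delta_{\mathrm{stop}}/10}$. I would invoke auxiliary matrix estimates (promised in Section \ref{section:aux}) to convert the $T$-bound into the required bilinear-form control of $G$, and use Ward's identity $\sum_y|G_{xy}|^2=\eta^{-1}\operatorname{Im}G_{xx}$ to dispose of $\ell^2$ sums.

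For $\tau_{\mathrm{stop},1}$ I would use the integral representation $\mathcal{E}_t=\mathcal{E}^M_t+\mathcal{E}^D_t+\mathcal{E}^S_t$ in Lemma \ref{lemma:esdes} and estimate each piece separately, using that $\|\mathrm{Id}+(t-s)\Theta_t\|_{\ell^\infty\to\ell^\infty}$ and analogous norms are $\mathcal{O}(\eta^{-1/2})$ uniformly in $s\le t\le 1$ (by the heat-kernel-type bounds in Lemma \ref{lemma:thetaestimates}). The self-consistent term $\mathcal{E}^S_t$ is quadratic in $\mathcal{E}_s$, so under the bootstrap it is bounded by $W^{2\delta_{\mathrm{stop}}}W^{-7/2}\eta^{-3}$ times the propagator norms, which is a factor $W^{-7/4}\eta^{-3/2}$ smaller than the threshold (this is exactly where the assumption $\eta\gg W^{-3/4}$ enters). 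The martingale term $\mathcal{E}^M_t$ is handled by Burkholder--Davis--Gundy: its quadratic variation is a bilinear form in $|G_s|^2$ with coefficients $S_{\alpha\beta}$, and under the $\tau_{\mathrm{stop},2}$ bound the entry-wise bracket is of order $W^{-2}\eta^{-1}$, gaining a factor $W^{-1}$ from the band constraint and delivering a bound strictly below the threshold.

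The main obstacle is the drift term $\mathcal{E}^D_t$, which is the contribution from $\Omega_s=\overline{G}_s\odot(G_s(\mathcal{S}[G_s]-m)G_s)+\text{c.c.}$ The factor $\mathcal{S}[G_s]-m(z)$ is a local-law fluctuation and one would like to treat it as essentially mean-zero against $\overline{G}_s\odot G_s^{\,2}$, but naive Cauchy--Schwarz loses too much: one must exploit cancellations between the two factors. This is precisely the content of Proposition \ref{prop:dstop}, which in Section \ref{section:drift} will require a graphical/cumulant expansion to group resolvent chains into $T$-factors (and hence $\Theta$) and to separate off the sub-leading contribution. The threshold $W\gg N^{8/11}$ traces back to the power counting in this expansion: one must control a cubic-in-$G$ expression up to time $\eta\asymp W^2N^{-2}$, and the improvement over $W\gg N^{3/4}$ comes from extracting one additional ``$T$-propagator'' from $\Omega$, which is cheap only as long as $W^{-7/4}\eta^{-3/2}$ beats the typical size $W^{-1}\eta^{-1/2}$ of $\Theta$. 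Assuming Proposition \ref{prop:dstop} is in hand, closing the three estimates and invoking continuity yields $\mathbb{P}[\tau_{\mathrm{stop},i}\ne 1]\lesssim_D N^{-D}$.
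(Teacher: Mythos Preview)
Your proposal is correct and follows essentially the same route as the paper: a continuity/bootstrap argument built on the decomposition $\mathcal{E}^{\mathrm{stop}}_t=\mathcal{E}^{M,\mathrm{stop}}_t+\mathcal{E}^{D,\mathrm{stop}}_t+\mathcal{E}^{S,\mathrm{stop}}_t$, with Lemmas~\ref{lemma:sstop}, \ref{lemma:mstop}, Proposition~\ref{prop:dstop} handling the three pieces and Corollary~\ref{corollary:gfcont} closing $\tau_{\mathrm{stop},2}$ from $\tau_{\mathrm{stop},1}$. A few of your numerical claims are off, however: the bound on $\mathcal{E}^{S}_t$ must carry a factor of $N$ from the matrix product $\mathcal{E}_s^2$ (so the gain over the threshold is only the tiny $W^{-\upsilon}$ coming from $N\le W^{11/8-\upsilon}$, not $W^{-7/4}\eta^{-3/2}$), and the $\ell^1$-norm of the propagator $\mathrm{Id}+(t-s)\Theta_t$ is $1+\mathrm{O}(|\mathrm{Im}w_t|^{-1}|\mathrm{Im}w_s|)$ rather than $\mathrm{O}(\eta^{-1/2})$---this $s$-dependence is what makes the time integrals close.
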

Our strategy for proving Theorem \ref{theorem:stop} is to construct a ``stopped" version of the $\mathcal{E}_{t}(z)$ dynamics. To be more precise, let $\mathcal{E}^{\mathrm{stop}}$ denote the solution to
\begin{align*}
d\mathcal{E}^{\mathrm{stop}}_{t}(z)&=(\Theta_{t}\mathcal{E}^{\mathrm{stop}}_{t}(z)+\mathcal{E}^{\mathrm{stop}}_{t}(z)\Theta_{t})dt+\mathbf{1}_{t\leq\tau_{\mathrm{stop}}}\mathcal{E}^{\mathrm{stop}}_{t}(z)^{2}dt\\
&-\mathbf{1}_{t\leq\tau_{\mathrm{stop}}}S^{\frac12}dM_{t}(z)S^{\frac12}+\mathbf{1}_{t\leq\tau_{\mathrm{stop}}}S^{\frac12}\Omega_{t}(z)S^{\frac12}dt
\end{align*}
Now, by the Duhamel formula as in the proof of Lemma \ref{lemma:esdes}, we have 
\begin{align}
\mathcal{E}^{\mathrm{stop}}_{t}(z)=\mathcal{E}^{M,\mathrm{stop}}_{t}(z)+\mathcal{E}^{D,\mathrm{stop}}_{t}(z)+\mathcal{E}^{\mathrm{S},\mathrm{stop}}_{t}(z),\label{eq:estopformula}
\end{align}
where
\begin{align*}
\mathcal{E}^{M,\mathrm{stop}}_{t}(z)&:=-\int_{0}^{\tau_{\mathrm{stop}}\wedge t}\{\mathrm{Id}+(t-s)\Theta_{t}\}S^{\frac12}dM_{s}(z)S^{\frac12}\{\mathrm{Id}+(t-s)\Theta_{t}\},\\
\mathcal{E}^{D,\mathrm{stop}}_{t}(z)&:=\int_{0}^{\tau_{\mathrm{stop}}\wedge t}\{\mathrm{Id}+(t-s)\Theta_{t}\}S^{\frac12}\Omega_{s}(z)S^{\frac12}\{\mathrm{Id}+(t-s)\Theta_{t}\}ds,\\
\mathcal{E}^{S,\mathrm{stop}}_{t}(z)&:=\int_{0}^{\tau_{\mathrm{stop}}\wedge t}\{\mathrm{Id}+(t-s)\Theta_{t}\}\mathcal{E}_{s}(z)^{2}\{\mathrm{Id}+(t-s)\Theta_{t}\}ds.
\end{align*}
Since $\tau_{\mathrm{stop}}$ is a stopping time with respect to the Brownian filtration, by uniqueness of strong solutions to finite-dimensional Ito SDEs, we know that $\mathcal{E}^{\mathrm{stop}}_{t}(z)=\mathcal{E}_{t}(z)$ for all $t\leq\tau_{\mathrm{stop}}$. With this in mind, our goal is to now control each term in the previous display uniformly over all $t\in[0,1]$ with high probability, and the bounds we obtain will be better than the ones defining $\tau_{\mathrm{stop}}$. In particular, this will show that the stopping time $\tau_{\mathrm{stop}}$ is self-propagating, in which case Theorem \ref{theorem:stop} will follow since $\tau_{\mathrm{stop}}>0$ with probability $1$.

We start with the term $\mathcal{E}^{S,\mathrm{stop}}_{t}(z)$; we give a deterministic bound for this.
\begin{lemma}\label{lemma:sstop}
There exists $\delta>0$ so that $|\mathcal{E}^{S,\mathrm{stop}}_{t}(z)_{ab}|\lesssim W^{-\delta}W^{-\frac34}|\mathrm{Im}w_{t}|^{-1} W^{-1}|\mathrm{Im}w_{t}|^{-\frac12}$ for all $t\in[0,1]$.
\end{lemma}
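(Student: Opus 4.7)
The plan is to estimate $\mathcal{E}^{S,\mathrm{stop}}_t(z)$ entry-wise by combining three purely deterministic ingredients. The first is that, on the integration range $s\in[0,\tau_{\mathrm{stop}}\wedge t]$, the definition of $\tau_{\mathrm{stop}}\le\tau_{\mathrm{stop},1}$ in \eqref{eq:taubulk} provides the pointwise control $\max_{c,d}|\mathcal{E}_s(z)_{cd}|\le W^{\delta_{\mathrm{stop}}}W^{-7/4}|\mathrm{Im}\,w_s|^{-3/2}$. The second is the trivial bound $|(\mathcal{E}_s(z)^2)_{cd}|\le N\|\mathcal{E}_s(z)\|_\infty^2$ on the matrix square. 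The third is a row-sum bound on the sandwiching matrix $\mathrm{Id}+(t-s)\Theta_t$: because $S$ is doubly stochastic, one has $\Theta_t\mathbf{1}=|m(z)|^2(1-t|m(z)|^2)^{-1}\mathbf{1}$, and the classical bulk asymptotic $1-t|m(z)|^2\asymp|\mathrm{Im}\,w_t|$ then gives
\begin{align*}
\sum_c\{\mathrm{Id}+(t-s)\Theta_t\}_{ac}\lesssim 1+(t-s)|\mathrm{Im}\,w_t|^{-1}\lesssim \frac{|\mathrm{Im}\,w_s|}{|\mathrm{Im}\,w_t|},
\end{align*}
with the identical bound for column sums.

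Since $\mathrm{Id}+(t-s)\Theta_t$ has nonnegative entries, in the definition of $\mathcal{E}^{S,\mathrm{stop}}_t(z)_{ab}$ I would replace the two sandwiching factors by their row (respectively column) sums at $a$ (respectively $b$), and pull $\|\mathcal{E}_s^2\|_\infty$ out of the sum over $c,d$. Substituting the three estimates above then yields
\begin{align*}
|\mathcal{E}^{S,\mathrm{stop}}_t(z)_{ab}|\lesssim \int_0^t\left(\frac{|\mathrm{Im}\,w_s|}{|\mathrm{Im}\,w_t|}\right)^2 NW^{2\delta_{\mathrm{stop}}}W^{-7/2}|\mathrm{Im}\,w_s|^{-3}\,ds = NW^{2\delta_{\mathrm{stop}}}W^{-7/2}|\mathrm{Im}\,w_t|^{-2}\int_0^t\frac{ds}{|\mathrm{Im}\,w_s|}.
\end{align*}
The remaining time integral is $\mathrm{O}(\log\eta^{-1})$, because $|\mathrm{Im}\,w_s|\asymp(1-s)+\eta$ in the bulk.

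Comparing the resulting bound to the target $W^{-\delta}W^{-7/4}|\mathrm{Im}\,w_t|^{-3/2}$ reduces to the inequality $N^2 W^{2\delta_{\mathrm{stop}}}W^{-11/4}|\log\eta|\lesssim W^{-\delta}$, after using $|\mathrm{Im}\,w_t|^{-1/2}\le\eta^{-1/2}\asymp N/W$ from the standing assumption $\eta\asymp W^2 N^{-2}$. This is exactly where the hypothesis $W\ge N^{8/11+\upsilon}$ enters: it forces $N^2/W^{11/4}\le W^{-c\upsilon}$ for a fixed $c>0$, which dominates both the logarithm and the factor $W^{2\delta_{\mathrm{stop}}}$ as long as $\delta_{\mathrm{stop}}$ is chosen small relative to $\upsilon$. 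The main (and only) obstacle is step two: the naive factor of $N$ in $|(\mathcal{E}_s^2)_{cd}|\le N\|\mathcal{E}_s\|_\infty^2$ is unavoidable without off-diagonal decay for $\mathcal{E}_s$, and it is precisely what pins the exponent at $8/11$. A refinement that extracts such decay (as hinted at in Section \ref{section:conjecture}) should improve the assumption on $W$.
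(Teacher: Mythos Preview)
Your proof is correct and follows essentially the same route as the paper: both use the stopping-time bound on $\mathcal{E}_s$, the crude $N$-factor from $\|\mathcal{E}_s^2\|_\infty\le N\|\mathcal{E}_s\|_\infty^2$, and the row-sum estimate $\sum_c\{\mathrm{Id}+(t-s)\Theta_t\}_{ac}\lesssim|\mathrm{Im}w_s|/|\mathrm{Im}w_t|$, then integrate in $s$ and invoke $W\ge N^{8/11+\upsilon}$. The only cosmetic difference is that the paper splits $[1+\mathrm{O}(|\mathrm{Im}w_t|^{-1}|\mathrm{Im}w_s|)]^2$ into two integrals before substituting $N\le W^{11/8-\upsilon}$, whereas you keep the single bound $|\mathrm{Im}w_s|/|\mathrm{Im}w_t|$ and postpone the use of $N\lesssim W^{11/8}$ to the end; the resulting estimates coincide.
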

\begin{proof}
We first claim the following estimates:
\begin{align}
\sup_{x}\sum_{y}\{\mathrm{Id}+(t-s)\Theta_{t}\}_{xy}+\sup_{y}\sum_{x}\{\mathrm{Id}+(t-s)\Theta_{t}\}_{xy} = 1+\mathrm{O}(|\mathrm{Im}w_{t}|^{-1}|\mathrm{Im}w_{s}|). \label{eq:sstop1}
\end{align}
Since the matrix $\mathrm{Id}+(t-s)\Theta_{t}$ is symmetric, it suffices to control only one of the two terms on the LHS; we choose the first. It suffices to show that $\sum_{y}(t-s)(\Theta_{t})_{xy}=\mathrm{O}(|\mathrm{Im}w_{t}|^{-1}|\mathrm{Im}w_{s}|)$, since $\sum_{y}\mathrm{Id}_{xy}=1$. Recall that $w_{s}=-m(z)^{-1}-sm(z)$, so that $\mathrm{Im}w_{s}=\mathrm{Im}w_{t}+(t-s)\mathrm{Im}m(z)$. Since $|\mathrm{Im}m(z)|$ is bounded uniformly away from $0$ in the bulk (see Lemma 6.2 in \cite{EY}), we deduce that $|t-s|=\mathrm{O}(|\mathrm{Im}w_{s}|)$. Thus, to prove \eqref{eq:sstop1}, it suffices to prove that $\sum_{y}(\Theta_{t})_{xy}=\mathrm{O}(|\mathrm{Im}w_{t}|^{-1})$ (this can be found in Lemma \ref{lemma:thetaestimates}).

By \eqref{eq:sstop1} and matrix multiplication, for any indices $a,b$, we have 
\begin{align*}
|\mathcal{E}^{S,\mathrm{stop}}_{t}(z)_{ab}|&\leq\int_{0}^{\tau_{\mathrm{stop}}\wedge t}\left[1+\mathrm{O}(|\mathrm{Im}w_{t}|^{-1}|\mathrm{Im}w_{s}|)\right]^{2}\max_{a,b}|\mathcal{E}_{s}(z)^{2}_{ab}|ds\\
&\lesssim\int_{0}^{\tau_{\mathrm{stop}}\wedge t}\max_{a,b}|\mathcal{E}_{s}(z)^{2}_{ab}|ds+\int_{0}^{\tau_{\mathrm{stop}}\wedge t}|\mathrm{Im}w_{t}|^{-2}|\mathrm{Im}w_{s}|^{2}\max_{a,b}|\mathcal{E}_{s}(z)^{2}_{ab}|ds.
\end{align*}
By matrix multiplication, definition of $\tau_{\mathrm{stop}}$, and $N\leq W^{11/8-\upsilon}$, we have the bound below for any $s\leq\tau_{\mathrm{stop}}$:
\begin{align*}
\max_{a,b}|\mathcal{E}_{s}(z)^{2}_{ab}|&\leq N\max_{a,b}|\mathcal{E}_{s}(z)_{ab}|^{2}\leq W^{2\delta_{\mathrm{stop}}}NW^{-\frac32}|\mathrm{Im}w_{s}|^{-2}W^{-2}|\mathrm{Im}w_{s}|^{-1}\\
&\leq W^{2\delta_{\mathrm{stop}}}W^{-\frac{17}{8}-\upsilon}|\mathrm{Im}w_{s}|^{-3}.
\end{align*}
We combine the previous two displays to get
\begin{align*}
|\mathcal{E}^{S,\mathrm{stop}}_{t}(z)_{ab}|&\lesssim\int_{0}^{t}W^{2\delta_{\mathrm{stop}}}W^{-\frac{17}{8}-\upsilon}|\mathrm{Im}w_{s}|^{-3}ds+\int_{0}^{t}|\mathrm{Im}w_{t}|^{-2}W^{2\delta_{\mathrm{stop}}}W^{-\frac{17}{8}-\upsilon}|\mathrm{Im}w_{s}|^{-1}ds.
\end{align*}
We now change variables $\sigma=\mathrm{Im}w_{s}$. The Jacobian factor is $\mathrm{Im}m(z)\asymp1$ (see Lemma 6.2 in \cite{EY}). Moreover, for any $s\in[0,t]$, we have $\mathrm{Im}w_{s}=\mathrm{Im}w_{t}+(t-s)\mathrm{Im}m(z)\geq \mathrm{Im}w_{t}$ (which implies $\mathrm{Im}w_{s}=\eta+(1-s)\mathrm{Im}m(z)\leq\eta+\mathrm{Im}m(z)=\mathrm{Im}w_{0}\lesssim1$ by taking $t=1$); we explained this in the paragraph after \eqref{eq:sstop1}. Thus, we have 
\begin{align*}
|\mathcal{E}^{S,\mathrm{stop}}_{t}(z)_{ab}|&\lesssim\int_{|\mathrm{Im}w_{t}|}^{\mathrm{Im}w_{0}}W^{2\delta_{\mathrm{stop}}}W^{-\frac{17}{8}-\upsilon}\sigma^{-3}d\sigma+\int_{|\mathrm{Im}w_{t}|}^{\mathrm{Im}w_{0}}|\mathrm{Im}w_{t}|^{-2}W^{2\delta_{\mathrm{stop}}}W^{-\frac{17}{8}-\upsilon}\sigma^{-1}d\sigma\\
&\lesssim W^{2\delta_{\mathrm{stop}}}W^{-\frac{17}{8}-\upsilon}|\mathrm{Im}w_{t}|^{-2}+W^{2\delta_{\mathrm{stop}}}W^{-\frac{17}{8}-\upsilon}|\mathrm{Im}w_{t}|^{-2}\log|\mathrm{Im}w_{t}|^{-1}\\
&\lesssim W^{2\delta_{\mathrm{stop}}}W^{-\frac38-\upsilon}|\mathrm{Im}w_{t}|^{-\frac12}\log|\mathrm{Im}w_{t}|^{-1}\cdot W^{-\frac34}|\mathrm{Im}w_{t}|^{-1}\cdot W^{-1}|\mathrm{Im}w_{t}|^{-\frac12}.
\end{align*}
Because $\delta_{\mathrm{stop}}$ is small and $\upsilon>0$ is fixed, and since $|\mathrm{Im}w_{t}|\geq\eta\geq W^{-3/4}$, the first factor on the RHS, namely $W^{2\delta_{\mathrm{stop}}}W^{-3/8-\upsilon}|\mathrm{Im}w_{t}|^{-1/2}\log|\mathrm{Im}w_{t}|^{-1}$, is $\mathrm{o}(1)$, and the claim follows.
\end{proof}
We now tackle the stochastic integral $\mathcal{E}^{M,\mathrm{stop}}_{z}(t)$. For this, we will prove a pointwise moment bound and use a crude continuity estimate in time to bootstrap to a uniform estimate over $t\in[0,1]$ with high probability. Ultimately, we arrive at the following outcome.
\begin{lemma}\label{lemma:mstop}
Recall $\delta_{\mathrm{stop}}>0$ from the definition of $\tau_{\mathrm{stop}}$. We have the stochastic domination estimate
\begin{align}
\max_{t\in[0,1]}\max_{a,b}\frac{|\mathcal{E}^{M,\mathrm{stop}}_{t}(z)_{ab}|}{W^{-\frac34}|\mathrm{Im}w_{t}|^{-1}\cdot  W^{-1}|\mathrm{Im}w_{t}|^{-\frac12}}\prec W^{\frac{\delta_{\mathrm{stop}}}{5}}.\label{eq:mstop}
\end{align}
\end{lemma}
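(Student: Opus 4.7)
The plan is to estimate arbitrary even moments of $\mathcal{E}^{M,\mathrm{stop}}_{t}(z)_{ab}$ at fixed $(t,a,b)$ via the Burkholder-Davis-Gundy (BDG) inequality applied to its quadratic variation, then upgrade to a uniform-in-$t$ statement by a net argument combined with a crude continuity bound. The role of $\tau_{\mathrm{stop}}$ is precisely to make the integrand a process whose quadratic variation density can be evaluated deterministically by plugging in the a priori bounds encoded in $\tau_{\mathrm{stop},1}$ and $\tau_{\mathrm{stop},2}$.

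Fix $t \in [0,1]$ and indices $a,b$, and write $U_{ts} := \mathrm{Id} + (t-s)\Theta_{t}$. Since the kernel $(U_{ts}S^{1/2})_{ai}(S^{1/2}U_{ts})_{jb}$ is deterministic in $s$ and $\tau_{\mathrm{stop}}$ is a stopping time with respect to the Brownian filtration, the process $s \mapsto \mathcal{E}^{M,\mathrm{stop}}_{t}(z)_{ab}\big|_{s}$ (the integral up to $\tau_{\mathrm{stop}}\wedge s$) is a continuous martingale. BDG at any even exponent $2p$ yields
\[
\mathbb{E}\big|\mathcal{E}^{M,\mathrm{stop}}_{t}(z)_{ab}\big|^{2p} \lesssim_{p}\; \mathbb{E}\Big(\int_{0}^{\tau_{\mathrm{stop}}\wedge t} Q^{ab}_{t,s}\,ds\Big)^{p},
\]
where $Q^{ab}_{t,s}\,ds$ is the quadratic variation density. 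A direct Ito-calculus computation on $dM_{s}(z)_{ij} = 2\,\mathrm{Re}\big[\overline{G_{s}(z)_{ij}}\,(G_{s}\,dH_{s}\,G_{s})_{ij}\big]$ shows that, after sandwiching between $U_{ts}S^{1/2}$ on either side and telescoping $S^{1/2}S^{1/2}=S$, the density $Q^{ab}_{t,s}$ is a finite sum of monomials of the schematic form
\[
\sum_{i,j}\big(U_{ts}S^{1/2}\big)^{2}_{ai}\big(S^{1/2}U_{ts}\big)^{2}_{jb}\;F_{s}(z)_{ij}\,(F_{s}SF_{s})_{ij},
\]
with $F_{s}(z)_{ij}=|G_{s}(z)_{ij}|^{2}$.

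For $s\leq\tau_{\mathrm{stop}}$, the definition of $\tau_{\mathrm{stop},2}$ gives $F_{s}(z)_{ij} \prec |m(z)|^{2}\delta_{ij} + (S^{1/2}T_{s}S^{1/2})_{ij} + S^{1/2}_{ij} + W^{-D}$, while $\tau_{\mathrm{stop},1}$ together with $T_{s}=\Theta_{s}+\mathcal{E}_{s}$ and Lemma \ref{lemma:thetaestimates} controls $T_{s}$ entrywise by $W^{-1}|\mathrm{Im}w_{s}|^{-1/2}$ with row sums $\lesssim |\mathrm{Im}w_{s}|^{-1}$. Combined with the $\ell^{\infty}\to\ell^{\infty}$ bound $\|U_{ts}\|\lesssim 1+|\mathrm{Im}w_{t}|^{-1}|\mathrm{Im}w_{s}|$ extracted from \eqref{eq:sstop1}, $Q^{ab}_{t,s}$ becomes a deterministic expression in $W$ and $|\mathrm{Im}w_{s}|$. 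Integrating in $s$ via the change of variable $\sigma=\mathrm{Im}w_{s}$ as in the proof of Lemma \ref{lemma:sstop} yields
\[
\int_{0}^{\tau_{\mathrm{stop}}\wedge t} Q^{ab}_{t,s}\,ds \lesssim W^{\delta_{\mathrm{stop}}/2}\Big(W^{-3/4}|\mathrm{Im}w_{t}|^{-1}\cdot W^{-1}|\mathrm{Im}w_{t}|^{-1/2}\Big)^{2}.
\]
Plugging this into BDG, taking $p$ arbitrarily large, and applying Markov's inequality give stochastic domination of $|\mathcal{E}^{M,\mathrm{stop}}_{t}(z)_{ab}|$ at fixed $(t,a,b)$ by $W^{\delta_{\mathrm{stop}}/5}\cdot W^{-3/4}|\mathrm{Im}w_{t}|^{-1}\cdot W^{-1}|\mathrm{Im}w_{t}|^{-1/2}$.

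To upgrade to a uniform bound in $t\in[0,1]$, discretize $[0,1]$ into a net of $N^{C}$ points and union-bound over it and over the $N^{2}$ index pairs $(a,b)$. All $t$-dependence in $\mathcal{E}^{M,\mathrm{stop}}_{t}(z)$ enters through the deterministic weight $\{\mathrm{Id}+(t-s)\Theta_{t}\}$ and through the upper integration limit, so the entrywise $t$-Lipschitz constant is polynomial in $N,W$; taking $C$ large absorbs this oscillation and yields \eqref{eq:mstop}. The main obstacle is the bracket step: one has to unfold $d[M_{s}(z)_{ij},\overline{M_{s}(z)_{kl}}]$ carefully, pair the resulting $|G|^{2}$ factors with the correct $S$ contractions, and verify that the budget closes at exactly $W^{-7/2}|\mathrm{Im}w_{t}|^{-3}$ (the square of the target) after integration. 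The $W^{-3/4}$ exponent in $\tau_{\mathrm{stop},1}$ is calibrated precisely so that this accounting works under the hypothesis $W\geq N^{8/11+\upsilon}$.
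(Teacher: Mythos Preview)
Your outline is correct at the level of strategy---BDG on the quadratic variation, then a net argument---and this matches the paper. The gap is entirely in the bracket step, which you yourself flag as ``the main obstacle'' but do not carry out.

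The schematic form you write for $Q^{ab}_{t,s}$ is not what the quadratic variation actually is. Expanding $d[M_{s}(z)_{ij},\overline{M_{s}(z)_{i'j'}}]$ via $d[H_{s,\alpha\beta},\overline{H}_{s,\alpha'\beta'}]=\delta_{\alpha\alpha'}\delta_{\beta\beta'}S_{\alpha\beta}\,ds$ forces only $\alpha=\alpha'$, $\beta=\beta'$; it does \emph{not} force $i=i'$ or $j=j'$. The true density is (for one of the terms, writing $G=G_{s}(z)$)
\[
\sum_{x,x',y,y',u,v}(U_{ts}S^{1/2})_{ax}(U_{ts}S^{1/2})_{ax'}\,\overline{G}_{xy}G_{x'y'}G_{xu}\overline{G}_{x'u}S_{uv}G_{vy}\overline{G}_{vy'}\,(S^{1/2}U_{ts})_{yb}(S^{1/2}U_{ts})_{y'b},
\]
with four free external indices $x,x',y,y'$, not two. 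Your expression $\sum_{i,j}(\cdot)^{2}_{ai}(\cdot)^{2}_{jb}\,F_{ij}(FSF)_{ij}$ is only the diagonal $x=x',\,y=y'$ piece. If one tries to close the budget by inserting the entrywise bound $|G_{xy}|^{2}\lesssim W^{-1}|\mathrm{Im}w_{s}|^{-1/2}$ from $\tau_{\mathrm{stop},2}$ and summing the $U_{ts}S^{1/2}$ weights via \eqref{eq:sstop1}, the resulting estimate misses the target $W^{-7/2}|\mathrm{Im}w_{t}|^{-3}$ by a large power of $W$; the accounting does not close.

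What the paper does instead is structural. It first splits $U_{ts}=\mathrm{Id}+(t-s)\Theta_{t}$ on both sides, producing four pieces $\mathcal{E}^{M,1j}$, $j=1,\ldots,4$. For each piece the quadratic variation is rewritten as $\sum_{u}\mathrm{Tr}(\Omega^{u}\Upsilon^{u,\ast})$ for suitable matrices in the $(y,y')$ variables, where $\Omega^{u}$ (or its analogue $\Xi^{u}$) is recognized as positive semidefinite; the von~Neumann trace inequality then gives $\mathrm{Tr}(\Omega^{u}\Upsilon^{u,\ast})\leq\|\Upsilon^{u}\|_{\mathrm{op}}\,\mathrm{Tr}\,\Omega^{u}$. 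The operator norm is bounded by $\|G^{\ast}S^{u}G\|_{\mathrm{op}}\lesssim W^{-1}|\mathrm{Im}w_{s}|^{-2}$, and the trace is collapsed via the Ward identity $\sum_{u}|G_{xu}|^{2}=|\mathrm{Im}w_{s}|^{-1}\mathrm{Im}\,G_{xx}$. For the piece $j=4$ (both $U_{ts}$ factors replaced by $\mathrm{Id}$) a single bound still does not suffice: the paper derives two estimates, one of order $W^{-4}|\mathrm{Im}w_{t}|^{-4}$ and another of order $W^{-3}|\mathrm{Im}w_{t}|^{-2}$, and takes their geometric mean to land exactly on $W^{-7/2}|\mathrm{Im}w_{t}|^{-3}$. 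None of these ingredients---the PSD/trace-inequality reduction, the Ward collapse, or the interpolation for $j=4$---appear in your proposal, and without them the quadratic variation bound cannot be obtained.
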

\begin{proof}
Recall $dM_{t}(z)$ from Lemma \ref{lemma:gsdes}; using this, we write
\begin{align*}
\mathcal{E}^{M,\mathrm{stop}}_{t}(z)&=-\int_{0}^{\tau_{\mathrm{stop}}\wedge t}\{\mathrm{Id}+(t-s)\Theta_{t}(z)\}S^{\frac12}\left[\overline{G}_{s}(z)\odot\{G_{s}(z)dH_{s}G_{s}(z)\}\right]S^{\frac12}\{\mathrm{Id}+(t-s)\Theta_{t}\}\\
&-\int_{0}^{\tau_{\mathrm{stop}}\wedge t}\{\mathrm{Id}+(t-s)\Theta_{t}(z)\}S^{\frac12}\left[\overline{G_{s}(z)dH_{s}G_{s}(z)}\odot G_{s}(z)\right]S^{\frac12}\{\mathrm{Id}+(t-s)\Theta_{t}\}\\
&=:\mathcal{E}^{M,1}_{t}(z)+\mathcal{E}^{M,2}_{t}(z).
\end{align*}
We will control $\mathcal{E}^{M,1}_{t}(z)$; bounds for $\mathcal{E}^{M,2}_{t}(z)$ follow by the same argument. We further split
\begin{align}
\mathcal{E}^{M,1}_{t}(z)&=-\int_{0}^{\tau_{\mathrm{stop}}\wedge t}(t-s)^{2}\Theta_{t}S^{\frac12}\left[\overline{G}_{s}(z)\odot\{G_{s}(z)dH_{s}G_{s}(z)\}\right]S^{\frac12}\Theta_{t}\\
&-\int_{0}^{\tau_{\mathrm{stop}}\wedge t}(t-s)\Theta_{t}S^{\frac12}\left[\overline{G}_{s}(z)\odot\{G_{s}(z)dH_{s}G_{s}(z)\}\right]S^{\frac12}\\
&-\int_{0}^{\tau_{\mathrm{stop}}\wedge t}(t-s)S^{\frac12}\left[\overline{G}_{s}(z)\odot\{G_{s}(z)dH_{s}G_{s}(z)\}\right]S^{\frac12}\Theta_{t}\\
&-\int_{0}^{\tau_{\mathrm{stop}}\wedge t}S^{\frac12}\left[\overline{G}_{s}(z)\odot\{G_{s}(z)dH_{s}G_{s}(z)\}\right]S^{\frac12}\\
&=:\mathcal{E}^{M,11}_{t}(z)+\mathcal{E}^{M,12}_{t}(z)+\mathcal{E}^{M,13}_{t}(z)+\mathcal{E}^{M,14}_{t}(z).\label{eq:mstop0}
\end{align}
We now claim that for any indices $a,b$, we have the deterministic estimate
\begin{align}
[\mathcal{E}^{M,1j}_{t}(z)_{ab}]&\lesssim W^{\frac{\delta_{\mathrm{stop}}}{5}}W^{-\frac32}|\mathrm{Im}w_{t}|^{-2}\cdot W^{-2}|\mathrm{Im}w_{t}|^{-1}, \quad j=1,2,3,4,\label{eq:mstop1}
\end{align}
where the square brackets on the RHS mean quadratic variation. This, along with standard martingale arguments, will complete the proof, as we explain now. By the BDG inequality, since $\tau_{\mathrm{stop}}$ is a stopping time, for any $p\geq1$ finite and indices $a,b$, we have
\begin{align*}
\E|\mathcal{E}^{M,1j}_{t}(z)_{ab}|^{2p}\leq C_{p}\E[\mathcal{E}^{M,1j}_{t}(z)_{ab}]^{p}, \quad j=1,2,3,4,
\end{align*}
where the square brackets on the RHS indicate quadratic variation. Thus, by the Chebyshev inequality, we have $\mathbb{P}(|\mathcal{E}^{M,1j}_{t}(z)_{ab}|\geq N^{\delta}[\mathcal{E}^{M,1j}_{t}(z)_{ab}]^{1/2})\leq C_{p}N^{-2p\delta}$ for any $\delta>0$ and $p\geq1$. By \eqref{eq:mstop1}, we then get
\begin{align*}
\frac{|\mathcal{E}^{M,1j}_{t}(z)_{ab}|}{W^{-\frac34}|\mathrm{Im}w_{t}|^{-1}\cdot W^{-1}|\mathrm{Im}w_{t}|^{-\frac12}}\prec W^{\frac{\delta_{\mathrm{stop}}}{5}}.
\end{align*}
A union bound allows us to put a maximum over all indices $a,b$ and over all $t\in[0,1]\cap W^{-D}\Z$ for any $D>0$ fixed. Next, we control entries of $G_{s}(z)$ using its Frobenius norm, for example. Since the operator norm of $G_{s}(z)$ is $\mathrm{O}(|\mathrm{Im}w_{s}|^{-1})$, we get the crude bound $|G_{s}(z)_{ij}|\lesssim N|\mathrm{Im}w_{s}|^{-1}\lesssim W^{4}$ (recall that $\mathrm{Im}w_{s}\geq\mathrm{Im}w_{1}=\eta\gg W^{-3/4}$ and $N\ll W^{11/8}$). This controls the stochastic integrand in $\mathcal{E}^{M,1j}_{t}(z)$ by $W^{C}$ for some $C=\mathrm{O}(1)$, so we can then use a standard H\"{o}lder continuity argument for stochastic integrals to upgrade the supremum over $t\in[0,1]\cap W^{-D}\Z$ to a supremum over $t\in[0,1]$ (assuming $D>0$ is large enough but independent of $W$). The desired bound \eqref{eq:mstop} then follows, so we are left to prove \eqref{eq:mstop1}.

Throughout this argument, for convenience, we will denote $G_{xy}:=G_{s}(z)_{xy}$ for any indices $x,y$. We start with $j=1$. In this case, we directly compute as follows (note that $\Theta_{t},S^{1/2}$ commute):
\begin{align*}
&[\mathcal{E}^{M,11}_{t}(z)_{ab}]\\
&=\int_{0}^{\tau_{\mathrm{stop}}\wedge t}(t-s)^{4}\sum_{x,x',y,y',u,v}(\Theta_{t}S^{\frac12})_{ax}(\Theta_{t}S^{\frac12})_{ax'}\overline{G}_{xy}G_{x'y'}G_{xu}\overline{G}_{x'u}S_{uv}G_{vy}\overline{G}_{vy'}(\Theta_{t}S^{\frac12})_{yb}(\Theta_{t}S^{\frac12})_{y'b}ds.
\end{align*}
For any $u$-index, we define the following matrices with indices parameterized by $y,y'$:
\begin{align*}
\Upsilon^{u}_{yy'}&:=\sum_{v}S_{uv}G_{vy}\overline{G}_{vy'}(\Theta_{t}S^{\frac12})_{yb}(\Theta_{t}S^{\frac12})_{y'b},\\
\Omega^{u}_{yy'}&:=\sum_{x,x'}\overline{G}_{xy}G_{x'y'}G_{xu}\overline{G}_{x'u}(\Theta_{t}S^{\frac12})_{ax}(\Theta_{t}S^{\frac12})_{ax'}.
\end{align*}
In this notation, we have 
\begin{align}
[\mathcal{E}^{M,11}_{t}(z)_{ab}]&=\int_{0}^{\tau_{\mathrm{stop}}\wedge t}(t-s)^{4}\sum_{u}\sum_{y}(\Omega^{u}\Upsilon^{u,\ast})_{yy}ds.\label{eq:mstop2a}
\end{align}
We now note that $\Omega^{u}$ is a positive-semidefinite matrix; this can be readily verified by the observation that $\Omega^{u}$ has the form $G^{\ast}\mathcal{Q}GG^{\ast}\mathcal{Q}G$, where $\mathcal{Q}$ is a diagonal matrix with entries $\mathcal{Q}_{ij}=\delta_{ij}(\Theta_{t}S^{1/2})_{ai}$. By the von Neumann trace inequality, we can therefore bound the sum over $y$ by $\|\Upsilon^{u}\|_{\mathrm{op}}\sum_{y}\Omega^{u}_{yy}$, so that
\begin{align}
[\mathcal{E}^{M,11}_{t}(z)_{ab}]\leq\int_{0}^{\tau_{\mathrm{stop}}\wedge t}(t-s)^{4}\sum_{u}\|\Upsilon^{u}\|_{\mathrm{op}}\sum_{y}\Omega^{u}_{yy}ds.\label{eq:mstop2b}
\end{align}
First, we estimate as follows, in which $S^{u}$ is the diagonal matrix $S^{u}_{ij}=\delta_{ij}S_{ui}=\mathrm{O}(W^{-1})$:
\begin{align}
\|\Upsilon^{u}\|_{\mathrm{op}}&\leq\max_{\alpha,\beta}|(\Theta_{t}S^{\frac12})_{\alpha\beta}|^{2}\|G^{\ast}S^{u}G\|_{\mathrm{op}}\leq W^{-1}|\mathrm{Im}w_{s}|^{-2}\max_{\alpha,\beta}|(\Theta_{t}S^{\frac12})_{\alpha\beta}|^{2}. \label{eq:mstop3a}
\end{align}
Next, if we let $|G|^{2}=GG^{\ast}$ and $|G|^{4}=|G|^{2}|G|^{2}$, then we claim that the following holds: 
\begin{align}
\sum_{u,y}\Omega^{u}_{yy}&=\sum_{u,y,x,x'}\overline{G}_{xy}G_{x'y}G_{xu}\overline{G}_{x'u}(\Theta_{t}S^{\frac12})_{ax}(\Theta_{t}S^{\frac12})_{ax'}\label{eq:mstop3b}\\
&=\sum_{u,x,x'}(\Theta_{t}S^{\frac12})_{ax}(\Theta_{t}S^{\frac12})_{ax'}G_{xu}|G|^{2}_{x'x}\overline{G}_{x'u}\label{eq:mstop3c}\\
&=\sum_{x,x'}(\Theta_{t}S^{\frac12})_{ax}(\Theta_{t}S^{\frac12})_{ax'}|G|^{2}_{xx'}|G|^{2}_{x'x}\label{eq:mstop3d}\\
&\leq2\sum_{x,x'}|(\Theta_{t}S^{\frac12})_{ax}|^{2}|G|^{2}_{xx'}|G|^{2}_{x'x}+2\sum_{x,x'}|(\Theta_{t}S^{\frac12})_{ax'}|^{2}|G|^{2}_{xx'}|G|^{2}_{x'x}\label{eq:mstop3e}\\
&=4\sum_{x}|(\Theta_{t}S^{\frac12})_{ax}|^{2}|G|^{4}_{xx}\lesssim|\mathrm{Im}w_{s}|^{-3}\max_{k}\mathrm{Im}G_{kk}\sum_{x}|(\Theta_{t}S^{\frac12})_{ax}|^{2}\label{eq:mstop3f}\\
&\lesssim W^{\delta_{\mathrm{stop}}}|\mathrm{Im}w_{s}|^{-3}W^{-1}|\mathrm{Im}w_{t}|^{-\frac32},\label{eq:mstop3g}
\end{align}
The first three lines follow by matrix multiplication. The fourth line follows by Schwarz; for this, note that since $|G|^{2}$ is Hermitian, we have that $|G|^{2}_{xx'}|G|^{2}_{x'x}=||G|^{2}_{xx'}|^{2}$ is non-negative. The fifth line follows by matrix multiplication and $|G|^{4}_{xx}\leq|\mathrm{Im}w_{s}|^{-3}\max_{k}\mathrm{Im}G_{kk}$ (i.e. Ward). The last inequality uses $s\leq\tau_{\mathrm{stop}}$ in the quadratic variation, so that $|G_{kk}|$-terms are all $\mathrm{O}(W^{\delta_{\mathrm{stop}}})$. In the last inequality, we also use the bounds {$\max_{\alpha,\beta}|(\Theta_{t})_{\alpha\beta}|\lesssim W^{-1}|\mathrm{Im}w_{t}|^{-1/2}$} and {$\sum_{\beta}|(\Theta_{t}S^{1/2})_{\alpha\beta}|\lesssim |\mathrm{Im}w_{t}|^{-1}$} (see Lemma \ref{lemma:thetaestimates}). By the previous three displays, and recalling from the proof of Lemma \ref{lemma:sstop} that $(t-s)\lesssim|\mathrm{Im}w_{s}|$, we have
\begin{align}
[\mathcal{E}^{M,11}_{t}(z)_{ab}]\lesssim\int_{0}^{t}|\mathrm{Im}w_{s}|^{-1}W^{\delta_{\mathrm{stop}}}W^{-2}|\mathrm{Im}w_{t}|^{-\frac32}\max_{\alpha,\beta}|(\Theta_{t}S^{\frac12})_{\alpha\beta}|^{2}ds.\label{eq:mstop4a}
\end{align}
Again, as in the proof of Lemma \ref{lemma:sstop}, we can change variables $\sigma=|\mathrm{Im}w_{s}|$ to get
\begin{align}
[\mathcal{E}^{M,11}_{t}(z)_{ab}]\lesssim W^{\delta_{\mathrm{stop}}}W^{-2}|\mathrm{Im}w_{t}|^{-\frac32}\max_{\alpha,\beta}|(\Theta_{t}S^{\frac12})_{\alpha\beta}|^{2}\int_{\mathrm{Im}w_{t}}^{\mathrm{Im}w_{0}}\sigma^{-1}\lesssim W^{-4+\delta+\delta_{\mathrm{stop}}}|\mathrm{Im}w_{t}|^{-\frac52},\label{eq:mstop4b}
\end{align}
where the last bound follows since $\log|\mathrm{Im}w_{t}|^{-1}\lesssim\log\eta^{-1}\lesssim W^{\delta}$ for any $\delta>0$ and {$\max_{\alpha,\beta}|(\Theta_{t}S^{\frac12})_{\alpha\beta}|^{2}\lesssim W^{-2}|\mathrm{Im}w_{t}|^{-1}$ and $|\mathrm{Im}w_{t}|=\mathrm{O}(1)$}. (For these latter two bounds, see Lemma \ref{lemma:thetaestimates} and note that $\mathrm{Im}w_{t}=\eta+(1-t)\mathrm{Im}m(z)\lesssim1$.) If $\delta,\delta_{\mathrm{stop}}$ are small enough, the previous display immediately gives \eqref{eq:sstop1} for $j=1$.

We now move to $j=3$. The idea again is the same. First, we compute
\begin{align}
[\mathcal{E}^{M,13}_{t}(z)_{ab}]&=\int_{0}^{\tau_{\mathrm{stop}}\wedge t}(t-s)^{2}\sum_{x,x',y,y',u,v}S^{\frac12}_{ax}S^{\frac12}_{ax'}\overline{G}_{xy}G_{x'y'}G_{xu}\overline{G}_{x'u}S_{uv}G_{vy}\overline{G}_{vy'}(\Theta_{t}S^{\frac12})_{yb}(\Theta_{t}S^{\frac12})_{y'b}ds.\label{eq:mstop5a}
\end{align}
We now consider the following matrices for any $u$, which are indexed by $y,y'$:
\begin{align*}
\Upsilon^{u}_{yy'}&:=\sum_{v}S_{uv}G_{vy}\overline{G}_{vy'}(\Theta_{t}S^{\frac12})_{yb}(\Theta_{t}S^{\frac12})_{y'b},\\
\Xi^{u}_{yy'}&:=\sum_{x,x'}\overline{G}_{xy}G_{x'y'}G_{xu}\overline{G}_{x'u}S^{\frac12}_{ax}S^{\frac12}_{ax'}.
\end{align*}
The matrix $\Upsilon^{u}$ is the same as in our analysis for $j=1$, and $\Xi^{u}$ is the same as $\Omega^{u}$ but dropping the $\Theta_{t}$ matrices therein. With this notation, as in our estimates for $[\mathcal{E}^{M,11}_{t}(z)_{ab}]$ (see \eqref{eq:mstop2a}, \eqref{eq:mstop2b}, and \eqref{eq:mstop3a}), we have the following (note that $\Xi^{u}$ is also positive-semidefinite):
\begin{align*}
[\mathcal{E}^{M,13}_{t}(z)_{ab}]&\leq\int_{0}^{\tau_{\mathrm{stop}}\wedge t}(t-s)^{2}\sum_{u}\|\Upsilon^{u}\|_{\mathrm{op}}\sum_{y}\Xi^{u}_{yy}ds\\
&\leq W^{-1}\max_{\alpha,\beta}|(\Theta_{t})_{\alpha\beta}|^{2}\int_{0}^{\tau_{\mathrm{stop}}\wedge t}(t-s)^{2}|\mathrm{Im}w_{s}|^{-2}\sum_{u,y}\Xi^{u}_{yy}ds.
\end{align*}
To control the remaining summation over $u,y$, we follow exactly the calculation from \eqref{eq:mstop3b}-\eqref{eq:mstop3g}. The only difference is that all $\Theta_{t}S^{1/2}$-matrices are replaced by $S^{1/2}$. In particular, we can copy \eqref{eq:mstop3b}-\eqref{eq:mstop3f} verbatim, and then we can use that $S^{1/2}_{ab}=\mathrm{O}(W^{-1})$ and $\sum_{b}S^{1/2}_{ab}=\mathrm{O}(1)$ to get
\begin{align}
\sum_{u,y}\Xi^{u}_{yy}&\lesssim|\mathrm{Im}w_{s}|^{-3}\max_{k}\mathrm{Im}G_{kk}\sum_{x}|S^{\frac12}_{ax}|^{2}\lesssim W^{\frac{\delta_{\mathrm{stop}}}{10}}W^{-1}|\mathrm{Im}w_{s}|^{-3}.\label{eq:mstop5}
\end{align}
Combining the last two displays and using the change-of-variables $\sigma=\mathrm{Im}w_{s}$ (as in \eqref{eq:mstop4a} and \eqref{eq:mstop4b}) gives
\begin{align}
[\mathcal{E}^{M,13}_{t}(z)_{ab}|&\lesssim W^{\frac{\delta_{\mathrm{stop}}}{10}}W^{-2}\max_{\alpha,\beta}|(\Theta_{t}S^{\frac12})_{\alpha\beta}|^{2}\int_{0}^{t}(t-s)^{2}|\mathrm{Im}w_{s}|^{-5}ds\label{eq:mstop5b}\\
&\lesssim W^{\frac{\delta_{\mathrm{stop}}}{10}}W^{-2}\max_{\alpha,\beta}|(\Theta_{t}S^{\frac12})_{\alpha\beta}|^{2}\int_{\mathrm{Im}w_{t}}^{\mathrm{Im}w_{0}}\sigma^{-3}d\sigma\nonumber\\
&\lesssim W^{\frac{\delta_{\mathrm{stop}}}{10}}W^{-2}W^{-2}|\mathrm{Im}w_{t}|^{-1}|\mathrm{Im}w_{t}|^{-2},\nonumber
\end{align}
where the last line again uses {$\max_{\alpha,\beta}|(\Theta_{t}S^{1/2})_{\alpha\beta}|^{2}\lesssim W^{-2}|\mathrm{Im}w_{t}|^{-1}$} (see Lemma \ref{lemma:thetaestimates}). If we take $\delta_{\mathrm{stop}}>0$ small enough, then the last line of the above display is $\ll W^{-3/2}|\mathrm{Im}w_{t}|^{-2}W^{-2}|\mathrm{Im}w_{t}|^{-1}$ since $\mathrm{Im}w_{t}\geq\eta\gg W^{-3/4}$ (see Remark \ref{rem:eta}). Thus, \eqref{eq:mstop1} for $j=3$ follows. The case $j=2$ follows by the exact same argument if we swap $u$ with $v$ indices, and if we swap $(x,x')$ with $(y,y')$ indices, and if we swap $a$ with $b$ indices.

Thus, we are left with $j=4$. We will get two bounds on $[\mathcal{E}^{M,14}_{t}(z)_{ab}]$ and interpolate them. For the first bound, we will follow the same strategy as we used for $j=1,2,3$. To start, we compute
\begin{align*}
[\mathcal{E}^{M,14}_{t}(z)_{ab}]&=\int_{0}^{\tau_{\mathrm{stop}}\wedge t}\sum_{x,x',y,y',u,v}S^{\frac12}_{ax}S^{\frac12}_{ax'}\overline{G}_{xy}G_{x'y'}G_{xu}\overline{G}_{x'u}S_{uv}G_{vy}\overline{G}_{vy'}S^{\frac12}_{yb}S^{\frac12}_{y'b}ds.
\end{align*}
Now, define the following matrices for any $u$, which are indexed by $y,y'$:
\begin{align*}
\Gamma^{u}_{yy'}&:=\sum_{v}S_{uv}G_{vy}\overline{G}_{vy'}S^{\frac12}_{yb}S^{\frac12}_{y'b},\\
\Xi^{u}_{yy'}&:=\sum_{x,x'}\overline{G}_{xy}G_{x'y'}G_{xu}\overline{G}_{x'u}S^{\frac12}_{ax}S^{\frac12}_{ax'}.
\end{align*}
In this notation, we have 
\begin{align*}
[\mathcal{E}^{M,14}_{t}(z)_{ab}]&=\int_{0}^{\tau_{\mathrm{stop}}\wedge t}\sum_{u,y}(\Xi^{u}\Gamma^{u,\ast})_{yy}ds.
\end{align*}
Again, we note that $\Xi^{u}$ is positive semi-definite. This implies that $\sum_{y}(\Xi^{u}\Gamma^{u,\ast})_{yy}\leq\|\Gamma^{u}\|_{\mathrm{op}}\sum_{y}\Xi^{u}_{yy}$. Next, analogous to \eqref{eq:mstop3a}, we note that $\|\Gamma^{u}\|_{\mathrm{op}}\leq \max_{\alpha,\beta}|S^{1/2}_{\alpha\beta}|^{2}\|G^{\ast}S^{u}G\|_{\mathrm{op}}\lesssim W^{-3}|\mathrm{Im}w_{s}|^{-2}$. If we combine this paragraph with the previous display and \eqref{eq:mstop5}, then we get
\begin{align}
[\mathcal{E}^{M,14}_{t}(z)_{ab}]&\lesssim\int_{0}^{t}W^{\frac{\delta_{\mathrm{stop}}}{10}}W^{-4}|\mathrm{Im}w_{s}|^{-5}ds\nonumber\\
&\lesssim W^{\frac{\delta_{\mathrm{stop}}}{10}}W^{-4}\int_{\mathrm{Im}w_{t}}^{\mathrm{Im}w_{0}}\sigma^{-5}d\sigma\lesssim W^{\frac{\delta_{\mathrm{stop}}}{10}}W^{-4}|\mathrm{Im}w_{t}|^{-4}.\label{eq:mstop6}
\end{align}
The second line follows again by the change-of-variables $\sigma=\mathrm{Im}w_{s}$ as in \eqref{eq:mstop4b}. We now get a second bound on $[\mathcal{E}^{M,14}_{t}(z)_{ab}]$. Recall $\mathcal{E}^{M,14}_{t}(z)$ from \eqref{eq:mstop0}. Because $S^{1/2}$ is an averaging operator, when we compute estimate the bracket $[\mathcal{E}^{M,14}_{t}(z)_{ab}]$, we can drop the second $S^{1/2}$-factor in $\mathcal{E}^{M,14}_{t}(z)$. (In words, the quadratic variation of an average is bounded above by the average of the quadratic variations because of Schwarz.) So,
\begin{align}
[\mathcal{E}^{M,14}_{t}(z)_{ab}]&\leq\max_{\alpha,\beta}\left[\left\{\int_{0}^{\tau_{\mathrm{stop}}\wedge t}S^{\frac12}\left[\overline{G}_{s}(z)\odot\{G_{s}(z)dH_{s}G_{s}(z)\}\right]\right\}_{\alpha\beta}\right]\nonumber\\
&=\max_{\alpha,\beta}\int_{0}^{\tau_{\mathrm{stop}}\wedge t}\sum_{x,x',u,v}S^{\frac12}_{\alpha x}S^{\frac12}_{\alpha x'}\overline{G}_{x\beta}G_{x'\beta}G_{xu}\overline{G}_{x'u}S_{uv}|G_{v\beta}|^{2}ds.\label{eq:mstop7a}
\end{align}
Before time $\tau_{\mathrm{stop}}$, we can bound $|T_{s}(z)_{xy}|\lesssim|(\Theta_{s})_{xy}|+|\mathcal{E}_{s}(z)_{xy}|\lesssim W^{-1}|\mathrm{Im}w_{s}|^{-1/2}$. Before time $\tau_{\mathrm{stop}}$, this bound on $|T_{s}(z)_{xy}|$ also gives $\sum_{v}S_{uv}|G_{v\beta}|^{2}\lesssim W^{\delta_{\mathrm{stop}}/10}S_{u\beta}+W^{\delta_{\mathrm{stop}}/10}[S^{1/2}T_{s}(z)S^{1/2}]_{u\beta}\lesssim W^{\delta_{\mathrm{stop}}/10}S_{u\beta}+W^{\delta_{\mathrm{stop}}/10}W^{-1}|\mathrm{Im}w_{s}|^{-1/2}\lesssim W^{\delta_{\mathrm{stop}}/10}W^{-1}|\mathrm{Im}w_{s}|^{-1/2}$ since $\mathrm{Im}w_{s}=\mathrm{O}(1)$. Using this and the previous display, we claim
\begin{align}
[\mathcal{E}^{M,14}_{t}(z)_{ab}]&\lesssim\max_{\alpha,\beta}\int_{0}^{\tau_{\mathrm{stop}}\wedge t}W^{\frac{\delta_{\mathrm{stop}}}{10}}W^{-1}|\mathrm{Im}w_{s}|^{-\frac12}\sum_{x,x',u}S^{\frac12}_{\alpha x}S^{\frac12}_{\alpha x'}\overline{G}_{x\beta}G_{x'\beta}G_{xu}\overline{G}_{x'u}ds\nonumber\\
&=\max_{\alpha,\beta}\int_{0}^{\tau_{\mathrm{stop}}\wedge t}W^{\frac{\delta_{\mathrm{stop}}}{10}}W^{-1}|\mathrm{Im}w_{s}|^{-\frac12}(G^{\ast}S^{\frac12,\alpha}GG^{\ast}S^{\frac12,\alpha}G)_{\beta\beta}ds,\label{eq:mstop7b}
\end{align}
where $S^{1/2,\alpha}_{ij}=\delta_{ij}S^{1/2}_{\alpha i}$. The second line is a direct matrix multiplication calculation. To show the first line, we note that for any $u$, we have 
\begin{align*}
\sum_{x,x'}S^{1/2}_{\alpha x}S^{1/2}_{\alpha x'}\overline{G}_{x\beta}G_{x'\beta}G_{xu}\overline{G}_{x'u}=(G^{\ast}S^{\frac12,\alpha}GG^{\ast}S^{\frac12,\alpha}G)_{uu}
\end{align*}
which is non-negative since it is a diagonal entry of a covariance matrix. Therefore, in \eqref{eq:mstop7a}, we can bound the $\sum_{v}S_{uv}|G_{v\beta}|^{2}$-term as described after \eqref{eq:mstop7a} without the need to introduce absolute values. Thus, \eqref{eq:mstop7b} follows. We now control the $(\ldots)_{\beta\beta}$-term in \eqref{eq:mstop7b}. By Ward and general operator bounds, we have 
\begin{align}
(G^{\ast}S^{\frac12,\alpha}GG^{\ast}S^{\frac12,\alpha}G)_{\beta\beta}&=|\mathrm{Im}w_{s}|^{-1}(G^{\ast}S^{\frac12,\alpha}\mathrm{Im}GS^{\frac12,\alpha}G)_{\beta\beta}\label{eq:mstop7c}\\
&\lesssim|\mathrm{Im}w_{s}|^{-1}\|\sqrt{S^{\frac12,\alpha}}\mathrm{Im}G\sqrt{S^{\frac12,\alpha}}\|_{\mathrm{op}}(G^{\ast}S^{\frac12,\alpha}G)_{\beta\beta}\nonumber\\
&\lesssim W^{-1}|\mathrm{Im}w_{s}|^{-2}(G^{\ast}S^{\frac12,\alpha}G)_{\beta\beta},\nonumber
\end{align}
where $\sqrt{S^{1/2,\alpha}}$ is the matrix square root of $S^{1/2,\alpha}$. The last line follows by a direct bound on the entries of the diagonal matrix $\sqrt{S^{1/2,\alpha}}$ of $\mathrm{O}(W^{-1/2})$, as well as the operator norm bound $\|G\|_{\mathrm{op}}\leq|\mathrm{Im}w_{s}|^{-1}$. Next, observe that 
\begin{align}
(G^{\ast}S^{\frac12,\alpha}G)_{\beta\beta}&=\sum_{\gamma}S^{\frac12}_{\alpha\gamma}|G_{\gamma\beta}|^{2}\lesssim W^{\frac{\delta_{\mathrm{stop}}}{10}}W^{-1}|\mathrm{Im}w_{s}|^{-\frac12},\label{eq:mstop7d}
\end{align}
where the second bound is explained immediately after \eqref{eq:mstop7a}. If we combine the previous two displays with \eqref{eq:mstop7b}, we get
\begin{align}
[\mathcal{E}^{M,14}_{t}(z)_{ab}]&\lesssim\int_{0}^{t}W^{\frac{\delta_{\mathrm{stop}}}{5}}W^{-3}|\mathrm{Im}w_{s}|^{-3}ds\nonumber\\
&\lesssim\int_{\mathrm{Im}w_{t}}^{\mathrm{Im}w_{0}}W^{\frac{\delta_{\mathrm{stop}}}{5}}W^{-3}\sigma^{-3}d\sigma\lesssim W^{\frac{\delta_{\mathrm{stop}}}{5}}W^{-3}|\mathrm{Im}w_{t}|^{-2}.\label{eq:mstop8}
\end{align}
Again, the last line is by change-of-variables $\sigma=\mathrm{Im}w_{s}$. We now interpolate the bounds \eqref{eq:mstop6} and \eqref{eq:mstop8}:
\begin{align*}
[\mathcal{E}^{M,14}_{t}(z)_{ab}]&\lesssim\sqrt{W^{\frac{\delta_{\mathrm{stop}}}{10}}W^{-4}|\mathrm{Im}w_{t}|^{-4}}\sqrt{W^{\frac{\delta_{\mathrm{stop}}}{5}}W^{-3}|\mathrm{Im}w_{t}|^{-2}}\\
&\lesssim W^{\frac{\delta_{\mathrm{stop}}}{5}}W^{-\frac72}|\mathrm{Im}w_{t}|^{-3}\\
&=W^{\frac{\delta_{\mathrm{stop}}}{5}}W^{-\frac32}|\mathrm{Im}w_{t}|^{-2}\cdot W^{-2}|\mathrm{Im}w_{t}|^{-1}
\end{align*}
This is exactly \eqref{eq:mstop1} for $j=4$, so we are done.
\end{proof}
The final ingredient is to control the drift $\mathcal{E}^{D,\mathrm{stop}}_{t}(z)$. This is more complicated, as it requires integration-by-parts expansions of the integrand. (These expansions, fortunately, are concrete, and only two expansions are needed.) We state the estimate below, and then we defer its proof to a future section.
\begin{prop}\label{prop:dstop}
We have the stochastic domination estimate
\begin{align}
\max_{t\in[0,1]}\max_{a,b}\frac{|\mathcal{E}^{D,\mathrm{stop}}_{t}(z)_{ab}|}{W^{-\frac34}|\mathrm{Im}w_{t}|^{-1}\cdot  W^{-1}|\mathrm{Im}w_{t}|^{-\frac12}}\prec 1.\label{eq:dstop}
\end{align}
\end{prop}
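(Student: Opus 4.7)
The plan is to expand the integrand of $\mathcal{E}^{D,\mathrm{stop}}_t(z)$ using Gaussian integration by parts (Stein's lemma / cumulant expansion) on the entries of $H_s$ implicit in $G_s(z)$. The naive triangle-inequality bound on $\Omega_s(z)$ only exploits one factor of the self-consistent error $\mathcal{S}[G_s]-m(z)$, which is not small enough; the IBP expansion uncovers a hidden cancellation and extracts additional factors of $S_{ij}\lesssim W^{-1}$. The two outer $\{\mathrm{Id}+(t-s)\Theta_t\}$ factors are absorbed exactly as in the proof of Lemma \ref{lemma:sstop} (using the row/column bound $1+O(|\mathrm{Im}w_t|^{-1}|\mathrm{Im}w_s|)$ from Lemma \ref{lemma:thetaestimates}), which reduces the task to controlling $(S^{1/2}\Omega_s(z)S^{1/2})_{ab}$. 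Since the two summands in $\Omega_s$ have parallel structure, it suffices to treat $\overline{G}_s\odot(G_s\Psi_s G_s)$ with $\Psi_s:=\mathcal{S}[G_s]-m(z)$ (diagonal): entry-wise, a sum over indices $i,k,\ell,j$ of products of six resolvent entries weighted by $S^{1/2}$ and $S$.

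For the first IBP step, I would use $(H_s-w_s)G_s=\mathrm{Id}$ to write $G_s=-w_s^{-1}-w_s^{-1}H_sG_s$, apply Gaussian IBP on one of the $H_{s,ij}$ factors, and use $\partial_{H_{s,ij}}G_{s,\alpha\beta}=-G_{s,\alpha i}G_{s,j\beta}$ paired with the variance $S_{ij}\lesssim W^{-1}$. Among the terms this produces is a ``main'' piece proportional to $-w_s^{-1}G_s\mathcal{S}[G_s]G_s$ which, combined with the $-m(z)G_s^2$-piece of $G_s\Psi_s G_s$ via the identities $w_0=-m(z)^{-1}$ and $w_s=w_0-sm(z)$, realizes the structural cancellation that removes the leading part of $\Psi_s$. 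A second IBP, applied to a surviving resolvent entry in the remainder, harvests another $W^{-1}$ factor and reduces the expansion to a controlled finite list of graphs.

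Each surviving graph is then bounded pathwise before $\tau_{\mathrm{stop}}$: resolvent entries are estimated via $|G_{\alpha\beta}-m(z)\delta_{\alpha\beta}|^2\leq W^{\delta_{\mathrm{stop}}/10}((S^{1/2}T_sS^{1/2})_{\alpha\beta}+S^{1/2}_{\alpha\beta}+W^{-D})$ from the definition of $\tau_{\mathrm{stop},2}$, with $T_s=\Theta_s+\mathcal{E}_s$ controlled by $\tau_{\mathrm{stop},1}$ and $\Theta_s$-entries estimated by Lemma \ref{lemma:thetaestimates}; internal $N$-sums are handled with Ward and operator-norm bounds of the form $\|G_s^{\ast}S^uG_s\|_{\mathrm{op}}\lesssim W^{-1}|\mathrm{Im}w_s|^{-2}$, in the same style as the proof of Lemma \ref{lemma:mstop}. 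After pathwise bounds I would integrate in $s$ using $\sigma=\mathrm{Im}w_s$, and upgrade to stochastic domination uniform in $t\in[0,1]$ via a $p$-th moment argument combined with a Hölder-in-$s$ grid continuity estimate (using the crude operator bound $\|G_s\|_{\mathrm{op}}\leq|\mathrm{Im}w_s|^{-1}$), exactly as at the end of the proof of Lemma \ref{lemma:mstop}.

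The main obstacle is the combinatorial bookkeeping: one must enumerate the graphs produced by two IBP rounds, identify precisely which ones cancel via the self-consistent equation, and verify that every surviving graph is bounded by $W^{-7/4}|\mathrm{Im}w_t|^{-3/2}$. The threshold $W\geq N^{8/11+\upsilon}$ enters at exactly this stage: the surviving graphs contain internal $N$-sums that must be absorbed by the two $W^{-1}$-factors collected from the two IBP rounds together with the powers of $|\mathrm{Im}w_s|$ produced by Ward and $\Theta_s$-estimates; the constraint $W^{11/8}\gtrsim N$ is the break-even point at which all surviving graphs close. A third or more refined IBP expansion would likely relax this threshold, as alluded to in Section \ref{section:conjecture}.
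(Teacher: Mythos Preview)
Your high-level plan (two rounds of Gaussian integration by parts on the integrand, pathwise graph estimation before $\tau_{\mathrm{stop}}$, then time-integration and a net argument for uniformity in $t$) is the same as the paper's. But your proposed reduction step contains a genuine gap.

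You absorb both outer factors $\{\mathrm{Id}+(t-s)\Theta_t\}$ via their row/column sums $1+\mathrm{O}(|\mathrm{Im}w_t|^{-1}|\mathrm{Im}w_s|)$ and then work only with $\max_{a,b}|(S^{1/2}\Omega_s S^{1/2})_{ab}|$. After two IBP rounds, the sharpest bound one can obtain on this reduced quantity for the leading graphs is $\lesssim W^{-2+\varepsilon}|\mathrm{Im}w_s|^{-3/2}$; multiplying by the row-sum factor $|\mathrm{Im}w_t|^{-2}|\mathrm{Im}w_s|^{2}$ and integrating in $\sigma=|\mathrm{Im}w_s|$ yields $W^{-2+\varepsilon}|\mathrm{Im}w_t|^{-2}$. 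At $|\mathrm{Im}w_t|=\eta\asymp W^{-3/4+\upsilon}$ this is $\asymp W^{-1/2+\varepsilon}$, whereas the target $W^{-7/4}|\mathrm{Im}w_t|^{-3/2}$ is $\asymp W^{-5/8}$: you are over by roughly $W^{1/8}$ and the estimate does not close. The paper instead \emph{retains} the double lines $\{\mathrm{Id}+(t-s)\Theta_t\}S^{1/2}$ as edges throughout the graphical expansion and, after the two IBP rounds, handles the two double lines asymmetrically: one is summed (contributing $\mathrm{O}(|\mathrm{Im}w_t|^{-1}|\mathrm{Im}w_s|)$), while for the other the $(t-s)\Theta_t$-piece is bounded \emph{entry-wise} by $(t-s)W^{-1}|\mathrm{Im}w_t|^{-1/2}$ and the $\mathrm{Id}$-piece pins an index. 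The freed summation is then carried out via the resolvent row-sum bound $\sum_y|G_s(z)_{xy}|\lesssim W^{1/2+\varepsilon}|\mathrm{Im}w_s|^{-3/4}$ (this is \eqref{eq:g1-g4;bulk1}); this asymmetric treatment recovers precisely the missing $|\mathrm{Im}w_t|^{1/2}$.

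A secondary point: the summation tools used in the drift analysis are the isotropic bound \eqref{eq:bulkgbound} and the row-sum \eqref{eq:g1-g4;bulk1} (the latter relies on the exponential decay of $S^{1/2}\Theta_s S^{1/2}$ from Lemma~\ref{lemma:sthetasrange} and is exactly where $N\ll W^{11/8}$ enters), not the Ward/operator-norm bounds from Lemma~\ref{lemma:mstop} that you invoke. The threshold thus arises from \eqref{eq:g1-g4;bulk1}, not from balancing a bare $N$-sum against two $W^{-1}$ factors as you suggest.
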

\subsection{Basic Green's function estimates}
We now focus on estimates to propagate $\tau_{\mathrm{stop},2}$. First, we obtain an off-diagonal estimate assuming a weak a priori on-diagonal estimate.
\begin{lemma}\label{lemma:basicgf}
Fix any $t\in[0,1]$. Suppose that $\max_{x}|G_{t}(z)_{xx}|\prec1$ and $\max_{x\neq y}|G_{t}(z)_{xy}|\prec W^{-\delta}$ for some $\delta>0$. Then 
\begin{align}
\max_{a\neq b}\frac{|G_{t}(z)_{ab}|^{2}}{(S^{1/2}T_{t}(z)S^{1/2})_{ab}+S^{1/2}_{ab}}\prec1.
\end{align}
\end{lemma}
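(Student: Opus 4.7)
The natural approach is a Schur-complement resolvent expansion in row $a$ of $H$, followed by conditional Gaussian concentration, and an iterated expansion in column $b$ to produce the $(S^{1/2}T_{t}(z)S^{1/2})_{ab}=(SF_{t}(z)S)_{ab}$ structure on the right-hand side.

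First, one uses the standard identity $G_{ab}=-G_{aa}\sum_{k\neq a}H_{ak}G^{(a)}_{kb}$, valid for $a\neq b$, where $G^{(a)}$ is the resolvent of the minor of $H_{t}-w_{t}$ obtained by deleting row and column $a$. Because $G^{(a)}$ is measurable with respect to $\{H_{kl}:k,l\neq a\}$, conditioning on $G^{(a)}$ turns the sum into a linear combination of independent complex Gaussians with variances $S_{ak}$, and the standard large-deviation estimate for such sums (e.g.\ Lemma~3.8 of \cite{EKYY}) yields $|\sum_{k\neq a}H_{ak}G^{(a)}_{kb}|^{2}\prec\sum_{k\neq a}S_{ak}|G^{(a)}_{kb}|^{2}$. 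Using the minor identity $G^{(a)}_{kb}=G_{kb}-G_{ka}G_{ab}/G_{aa}$ together with $|G_{aa}|\prec 1$ and the hypothesis $|G_{ka}|\prec W^{-\delta}$ for $k\neq a$, the correction from replacing $G^{(a)}$ by $G$ contributes at most $W^{-2\delta}|G_{ab}|^{2}$, which is absorbed into the left-hand side. This gives
\[
|G_{ab}|^{2}\prec\sum_{k\neq a}S_{ak}|G_{kb}|^{2}.
\]

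To manufacture the second $S$-factor, I would iterate the procedure on the $b$ index: for each $k\neq b$ the analogous identity $G_{kb}=-G_{bb}\sum_{l\neq b}G^{(b)}_{kl}H_{lb}$, followed by the same conditional Gaussian concentration and $G^{(b)}\to G$ correction, gives $|G_{kb}|^{2}\prec\sum_{l\neq b}S_{lb}|G_{kl}|^{2}$. Isolating the boundary contribution $k=b$ (which bounds $S_{ab}|G_{bb}|^{2}\prec S_{ab}$), one arrives at
\[
|G_{ab}|^{2}\prec S_{ab}+\sum_{k,l}S_{ak}S_{lb}|G_{kl}|^{2}=S_{ab}+(S^{1/2}T_{t}(z)S^{1/2})_{ab}.
\]

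The remaining step, which I expect to be the main technical obstacle, is the purely deterministic comparison $S_{ab}\lesssim S^{1/2}_{ab}+(S^{1/2}T_{t}(z)S^{1/2})_{ab}$. On the part of the band where $S^{1/2}_{ab}\neq 0$, both quantities are of order $W^{-1}$ and the inequality is immediate. On the complementary annular region $\supp(S)\setminus\supp(S^{1/2})$, which is nonempty because $S=S^{1/2}\cdot S^{1/2}$ convolves the support of $S^{1/2}$ with itself, one has $S^{1/2}_{ab}=0$, and one must instead lower-bound $(S^{1/2}T_{t}(z)S^{1/2})_{ab}$ by keeping only the diagonal contribution $\sum_{k}S_{ak}S_{kb}|G_{kk}|^{2}\gtrsim (S^{2})_{ab}$, using $|G_{kk}|^{2}\gtrsim 1$ (which in the bulk follows from $G_{kk}\approx m(z)$ with $|m(z)|\asymp 1$). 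One then needs the profile estimate $S_{ab}\lesssim (S^{2})_{ab}$ on that annulus, which follows from the explicit form $S_{xy}=Z_{N,W}^{-1}f(|x-y|_{N}/W)$ and the analogous form of $S^{1/2}$ assumed in the paper. The Schur expansion and Gaussian concentration steps above are quite routine; this last profile comparison is where the specific structure of $S$ and its square root is really used.
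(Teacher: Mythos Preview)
Your approach is essentially identical to the paper's. The paper also uses the Schur-complement expansion in the $x$-row (citing Lemma~3.7 of \cite{YY21} for the combined step of expansion plus large-deviation bound, yielding $|G_{xy}|^{2}\prec|G_{xx}|^{2}\sum_{w\neq x}S_{xw}|G^{(x)}_{wy}|^{2}$), replaces the minor resolvent $G^{(x)}$ by $G$ using the identity $G^{(x)}_{wy}=G_{wy}-G_{wx}G_{xy}/G_{xx}$ and the off-diagonal hypothesis, and then iterates on the $y$-index (working with $G^{\ast}$) to produce the second $S$-factor. After absorbing the $W^{-2\delta}$ error terms, the paper arrives at exactly your display $|G_{xy}|^{2}\prec S_{xy}+(S^{1/2}TS^{1/2})_{xy}$.

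One remark on what you call the ``main technical obstacle'': the paper's proof in fact stops at $\{S_{xy}+(S^{1/2}TS^{1/2})_{xy}\}^{-1}|G_{xy}|^{2}\prec1$, with $S_{xy}$ rather than $S^{1/2}_{xy}$, and does not carry out the profile comparison you describe. So either the discrepancy between $S$ and $S^{1/2}$ in the statement is a minor inconsistency, or the distinction is immaterial for the downstream applications (both are $\mathrm{O}(W^{-1})$ and satisfy the same structural assumptions). Your proposed fix via $(S^{1/2}TS^{1/2})_{ab}\geq\sum_{k}S_{ak}|G_{kk}|^{2}S_{kb}\gtrsim(S^{2})_{ab}$ would require a \emph{lower} bound $|G_{kk}|\gtrsim 1$, which is not among the stated hypotheses of the lemma (only $|G_{kk}|\prec 1$ is assumed); you would need to import $G_{kk}\approx m(z)$ from elsewhere in the argument, as you note.
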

\begin{proof}
First, some notation. For any index $\alpha$, we let $G_{t}(z)^{(\alpha)}:=(H_{t}^{(\alpha)}-w_{t})^{-1}$, where $H_{t}^{(\alpha)}$ is the minor of $H_{t}$ obtained by removing the $\alpha$-row and $\alpha$-column. Also, for convenience, we will denote $G^{(\alpha)}_{xy}:=G_{t}(z)^{(\alpha)}_{xy}$. We record the following observations from the proof of Lemma 3.7 in \cite{YY21}:
\begin{align*}
|G_{xy}|^{2}\prec|G_{xx}|^{2}\sum_{w\neq x}S_{xw}|G^{(x)}_{wy}|^{2}&\lesssim|G_{xx}|^{2}\sum_{w\neq x}S_{xy}|G_{wy}|^{2}+|G_{xy}|^{2}\sum_{w\neq x}S_{xw}|G_{wx}|^{2}\\
&\prec\sum_{w\neq x}S_{xw}|G_{wy}|^{2}+W^{-2\delta}|G_{xy}|^{2},
\end{align*}
where the last bound follows by the assumptions $|G_{xx}|\prec1$ and $\sup_{w\neq x}|G_{wx}|^{2}\prec W^{-\delta}$. (In principle, every factor of and $S$-entry above should have a factor of $t$, since the entries of $H_{t}$ have variance $t$. But for the sake of an upper bound, we can drop all factors of $t$.) By the same token, but for $G^{\ast}$ instead of $G$, for any $w\neq y$, we have 
\begin{align*}
|G_{wy}|^{2}&=|G^{\ast}_{yw}|^{2}\prec|G^{\ast}_{yy}|^{2}\sum_{u\neq y}S_{yu}|G^{\ast,(y)}_{uw}|^{2}\\
&\lesssim|G^{\ast}_{yy}|^{2}\sum_{u\neq y}S_{yu}|G^{\ast}_{uw}|^{2}+|G^{\ast}_{yw}|^{2}\sum_{u\neq y}S_{yu}|G^{\ast}_{uy}|^{2}\\
&=|G_{yy}|^{2}\sum_{u\neq y}|G_{wu}|^{2}S_{uy}+|G_{wy}|^{2}\sum_{u\neq y}|G_{yu}|^{2}S_{uy}.
\end{align*}
If we now multiply by $S_{xw}$ and sum over all indices $w$ that are not equal to $x$, then we get the following, where the first line follows by first isolating $w=y$, where the second line follows by applying the previous display and re-inserting $w=y$ (which does not violate the upper bound being true since the summands are non-negative), and where the rest uses the a priori bounds from the statement of Lemma \ref{lemma:basicgf}:
\begin{align*}
\sum_{w\neq x}S_{xw}|G_{wy}|^{2}&\prec S_{xy}|G_{yy}|^{2}+\sum_{w\neq x,y}S_{xw}|G_{wy}|^{2}\\
&\lesssim S_{xy}|G_{yy}|^{2}+\sum_{w\neq x}S_{xw}|G_{yy}|^{2}\sum_{u\neq y}|G_{wu}|^{2}S_{uy}+\sum_{w\neq x}S_{xw}|G_{wy}|^{2}\sum_{u\neq y}|G_{yu}|^{2}S_{uy}\\
&\prec S_{xy}|G_{yy}|^{2}+\sum_{w,u}S_{xw}|G_{wu}|^{2}S_{uy}+\sum_{w\neq x}S_{xw}|G_{wy}|^{2}\sum_{u\neq y}|G_{yu}|^{2}S_{uy}\\
&\prec S_{xy}+\sum_{w,u}S_{xw}|G_{wu}|^{2}S_{uy}+W^{-2\delta}\sum_{w\neq x}S_{xw}|G_{wy}|^{2}\\
&=S_{xy}+(S^{\frac12}TS^{\frac12})_{xy}+W^{-2\delta}\sum_{w\neq x}S_{xw}|G_{wy}|^{2}.
\end{align*}
In the previous display, if we move the last term in the last line to the LHS of the first line, then we deduce
\begin{align*}
\sum_{w\neq x}S_{xw}|G_{wy}|^{2}\prec S_{xy}+(S^{\frac12}TS^{\frac12})_{xy}.
\end{align*}
Combining this with the first display of this proof gives
\begin{align*}
|G_{xy}|^{2}\prec S_{xy}+(S^{\frac12}TS^{\frac12})_{xy}+W^{-2\delta}|G_{xy}|^{2},
\end{align*}
which, by moving the last term onto the LHS, implies $\{S_{xy}+(S^{\frac12}TS^{\frac12})_{xy}\}^{-1}|G_{xy}|^{2}\prec1$. We can now take a union bound over indices to get the desired high probability bound for the maximum over all $x\neq y$.
\end{proof}
Next, we obtain an on-diagonal estimate assuming off-diagonal estimates and weak on-diagonal estimates.
\begin{lemma}
Suppose that we have the following two a priori estimates for some $\delta>0$:
\begin{align*}
\max_{a,b}|G_{t}(z)_{ab}-m(z)\delta_{ab}|&\leq W^{-\delta}\\
\max_{a\neq b}|G_{t}(z)_{ab}|&\prec \max_{a,b}|(S^{\frac12}T_{t}(z)S^{\frac12})_{ab}|+\max_{a,b}|S^{\frac12}_{ab}|.
\end{align*}
Then we have 
\begin{align*}
\max_{x}|G_{t}(z)_{xx}-m(z)|^{2}\prec \max_{a,b}|(S^{\frac12}T_{t}(z)S^{\frac12})_{ab}|+\max_{a,b}|S^{\frac12}_{ab}|+W^{-1}.
\end{align*}
\end{lemma}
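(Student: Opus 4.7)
The plan is to apply the Schur complement identity to each diagonal entry of $G_t(z)$. For each index $x$, one has
\begin{align*}
G_t(z)_{xx}^{-1} = H_{xx} - w_t - Z_x, \qquad Z_x := \sum_{i, j \neq x} H_{xi}\, G_t^{(x)}(z)_{ij}\, H_{jx},
\end{align*}
where $G_t^{(x)}(z)$ denotes the resolvent of the minor obtained by deleting row and column $x$. Combining with the self-consistent equation $-w_t = m^{-1} + tm$ yields $G_t(z)_{xx}^{-1} - m^{-1} = H_{xx} + tm - Z_x$. Since $|m|, |G_t(z)_{xx}| \asymp 1$ in the bulk, the elementary identity $G_{xx} - m = -m G_{xx}(G_{xx}^{-1} - m^{-1})$ implies $|G_t(z)_{xx} - m| \asymp |H_{xx} + tm - Z_x|$ whenever the right side is $o(1)$. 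It thus suffices to control the three summands on the right.

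The first is $|H_{xx}|^2 \prec t S_{xx} \lesssim W^{-1}$ by Gaussian concentration. For the second, Hanson-Wright applied to the centered quadratic form in the independent complex Gaussian vector $(H_{xi})_{i \neq x}$ yields
\begin{align*}
|Z_x - \E Z_x|^2 \prec \sum_{i,j \neq x} t^2 S_{xi} S_{xj} |G_t^{(x)}(z)_{ij}|^2.
\end{align*}
Split this into diagonal ($i = j$) and off-diagonal ($i \neq j$) parts. The diagonal piece is $\lesssim W^{-1}$ using $|G^{(x)}_{ii}| \lesssim 1$ and $\max_i S_{xi} \lesssim W^{-1}$. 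For the off-diagonal piece, substitute the minor identity
\begin{align*}
G_t^{(x)}(z)_{ij} = G_t(z)_{ij} - \frac{G_t(z)_{ix}\, G_t(z)_{xj}}{G_t(z)_{xx}},
\end{align*}
and apply the off-diagonal a priori estimate together with $\sum_i S_{xi} = 1$ to bound it by $\max_{a,b}(S^{1/2}T_t(z)S^{1/2})_{ab} + \max_{a,b}S^{1/2}_{ab} + W^{-1}$.

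For the deterministic piece, compute $\E Z_x = t\sum_{i \neq x} S_{xi} G^{(x)}_{ii}$ using $\E[H_{xi} \overline{H_{xj}}] = tS_{xi}\delta_{ij}$, apply the minor identity, and use $\sum_i S_{xi} = 1$ to rearrange:
\begin{align*}
\E Z_x - tm = t\sum_i S_{xi}(G_t(z)_{ii} - m) - t S_{xx} G_t(z)_{xx} - \frac{t}{G_t(z)_{xx}} \sum_{i \neq x} S_{xi}|G_t(z)_{ix}|^2.
\end{align*}
The last two terms are $\lesssim W^{-1}$ and $\prec \max_{a,b}(S^{1/2}T_tS^{1/2})_{ab} + \max_{a,b}S^{1/2}_{ab}$ respectively, via $S_{xx} \lesssim W^{-1}$ and the off-diagonal hypothesis. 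The averaged diagonal deviation $\sum_i S_{xi}(G_{ii} - m)$ is bounded crudely by the pointwise a priori estimate $W^{-\delta}$. Assembling the three contributions and squaring yields $|G_{xx} - m|^2 \prec \max_{a,b}(S^{1/2}T_tS^{1/2})_{ab} + \max_{a,b}S^{1/2}_{ab} + W^{-1} + W^{-2\delta}$.

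The main obstacle is the residual $W^{-2\delta}$ from the crude bound on the averaged deviation, since $\sum_i S_{xi}(G_{ii} - m)$ is only a weighted average and no fluctuation-averaging cancellation is being exploited. This is absorbed either by direct comparison (when $\delta \geq 1/4$, since $\max_{a,b} S^{1/2}_{ab} \asymp W^{-1/2}$ dominates) or by a standard bootstrap: the conclusion furnishes a sharper pointwise a priori bound, which upon reinsertion roughly doubles the effective $\delta$ at each step, so after finitely many iterations the residual is swallowed by the target $\max_{a,b}(S^{1/2}T_tS^{1/2})_{ab} + \max_{a,b}S^{1/2}_{ab} + W^{-1}$.
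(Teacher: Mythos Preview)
Your Schur--complement setup is the right starting point, and the handling of $H_{xx}$ and the centered quadratic form $Z_x-\E Z_x$ is fine. The gap is in the ``deterministic piece'': the term $t\sum_i S_{xi}(G_{ii}-m)$ enters $G_{xx}^{-1}-m^{-1}$ at \emph{first} order, so after passing to $|G_{xx}-m|$ and squaring you obtain
\[
\max_x|G_{xx}-m|^2 \ \prec\ \Lambda+W^{-1}+\Big(\max_i|G_{ii}-m|\Big)^2,
\]
with $\Lambda:=\max_{a,b}(S^{1/2}T S^{1/2})_{ab}+\max_{a,b}S^{1/2}_{ab}$. The self-referential term on the right carries coefficient~$1$, not a small prefactor, so it cannot be moved to the left. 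Your bootstrap does not close either: one pass gives $\max_i|G_{ii}-m|^2\prec\Lambda+W^{-1}+W^{-2\delta}$, and reinsertion returns exactly the same bound, since $|\sum_i S_{xi}(G_{ii}-m)|^2\le(\max_i|G_{ii}-m|)^2$ again. No doubling of $\delta$ occurs. (Also, $\max_{a,b}S^{1/2}_{ab}\asymp W^{-1}$ here, not $W^{-1/2}$---the matrix square root has the same band profile as $S$---so the ``$\delta\ge 1/4$'' escape route fails too.)

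What the paper does, via the cited estimate (5.11) from \cite{EKYY}, is precisely to remove this first-order term: one writes $G_{xx}-m=-m^2(G_{xx}^{-1}-m^{-1})+\mathrm{O}(\Psi^2)$, so that the linear contribution $tm^2\sum_i S_{xi}(G_{ii}-m)$ is brought to the left and inverted through the operator $(I-tm^2 S)^{-1}=B_t$, whose $\ell^\infty\to\ell^\infty$ norm is $\mathrm{O}(1)$ in the bulk. What survives on the right is the fluctuation plus a residual of order $\Psi^4$. Squared, this gives $\max_x|G_{xx}-m|^2\prec\Lambda+W^{-1}+\Psi^4$, and now $\Psi^4\le W^{-2\delta}\max_x|G_{xx}-m|^2+\max_{a\neq b}|G_{ab}|^4$ \emph{does} carry a small prefactor on the diagonal part, so it can be absorbed into the left side in a single step. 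The missing ingredient in your argument is exactly this inversion of $I-tm^2S$.
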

\begin{proof}
We first record an observation from the proof of Lemma 5.3 in \cite{EKYY}. By (5.11) therein and the two displays following it, we have 
\begin{align*}
|G_{xx}-m|^{2}\prec\left|\sum_{k\neq x}\left(|H_{xk}|^{2}-S_{xk}\right)G_{kk}^{(x)}\right|^{2}+\left|\sum_{\substack{k,\ell\neq x\\k\neq\ell}}H_{xk}G_{k\ell}^{(x)}H_{\ell x}\right|^{2}+\Psi^{4}+M^{-1},
\end{align*}
where $M\gtrsim W$ according to (2.10) in \cite{EKYY}, and where $\Psi=\max_{a,b}|G_{ab}-m\delta_{ab}|$ as in the statement of Lemma 5.3 in \cite{EKYY}. Now, we use resolvent identities (see Lemma 3.3 in \cite{YY21}) to get
\begin{align*}
|G^{(x)}_{k\ell}|&\leq|G_{k\ell}|+\frac{|G_{kx}||G_{x\ell}|}{|G_{xx}|}, \quad \forall k,\ell.
\end{align*}
By the a priori assumption $\max_{a,b}|G_{ab}-m\delta_{ab}|\leq W^{-\delta}$ for some $\delta>0$, along with $|G_{xx}|\gtrsim|G_{xx}-m(z)|+|m(z)|\succ|m(z)|\asymp 1$, the previous inequality implies that $|G_{k\ell}^{(x)}|\prec1$. (For $|m(z)|\asymp1$, it suffices to prove that $\mathrm{Im}m(z)\asymp1$ in the bulk, which can be found in Lemma 6.2 in \cite{EY}.) Thus, following the proof of Lemma 5.3 in \cite{EKYY}, we have the stochastic domination bound
\begin{align*}
\left|\sum_{k\neq x}\left(|H_{xk}|^{2}-S_{xk}\right)G_{kk}^{(x)}\right|^{2}\prec W^{-1}.
\end{align*}
The proof of Lemma 5.3 in \cite{EKYY} also gives the following stochastic domination bound:
\begin{align*}
\left|\sum_{\substack{k,\ell\neq x\\k\neq\ell}}H_{xk}G_{k\ell}^{(x)}H_{\ell x}\right|^{2}&\prec\sum_{\substack{k,\ell\neq x\\k\neq\ell}}S_{xk}|G_{k\ell}^{(x)}|^{2}S_{\ell x}\\
&\lesssim\sum_{\substack{k,\ell\neq x\\k\neq\ell}}S_{xk}|G_{k\ell}|^{2}S_{\ell x}+|G_{xx}|^{-2}\sum_{\substack{k,\ell\neq x\\k\neq\ell}}S_{xk}|G_{kx}|^{2}|G_{x\ell}|^{2}S_{\ell x}\\
&\lesssim\max_{a,b}|(S^{\frac12}TS^{\frac12})_{ab}|+\max_{a,b}|S^{\frac12}_{ab}|,
\end{align*}
where the last line uses the a priori off-diagonal estimate for $G_{ab}$-entries, the lower bound $|G_{xx}|\gtrsim|m|-|G_{xx}-m|$, the estimate $|m|\asymp1$, and the a priori bound $|G_{xx}-m|\leq W^{-\delta}$ for some $\delta>0$. If we combine the previous two displays with the first display of this proof, then we arrive at 
\begin{align*}
\max_{x}|G_{xx}-m|^{2}&\prec \max_{a,b}|(S^{\frac12}TS^{\frac12})_{ab}|+\max_{a,b}|S^{\frac12}_{ab}|+\max_{a,b}|G_{ab}-m\delta_{ab}|^{4}+W^{-1}\\
&\prec\max_{a,b}|(S^{\frac12}TS^{\frac12})_{ab}|+\max_{a,b}|S^{\frac12}_{ab}|+W^{-\delta}\max_{x}|G_{xx}-m|^{2}+W^{-1},
\end{align*}
where the last bound holds by $\max_{a,b}|G_{ab}-m\delta_{ab}|^{4}\lesssim\max_{a\neq b}|G_{ab}|^{4}+W^{-\delta}\max_{x}|G_{xx}-m|^{2}$ and the a priori estimates $\max_{a\neq b}|G_{ab}|\lesssim \max_{a,b}|(S^{\frac12}TS^{\frac12})_{ab}|+\max_{a,b}|S^{\frac12}_{ab}|$ and $\max_{a,b}|G_{ab}-m\delta_{ab}|\lesssim W^{-\delta}$. Moving $W^{-\delta}\max_{x}|G_{xx}-m|^{2}$ to the LHS in the previous display gives the desired result.
\end{proof}
Notice that $|G_{t}(z)\delta_{ab}-m(z)\delta_{ab}|=0$ at $t=0$ by construction of the matrix flow. In particular, the a priori estimates needed in the previous lemmas are true at $t=0$ trivially, and thus true for short times $t=N^{-C}$ for some large but finite $C>0$ by a deterministic Lipschitz bound on the Green's function and the $w_{t}$-flow. Therefore, similar to Corollary 5.4 in \cite{EKYY}, we can use a continuity argument and the previous two lemmas to derive the following (in which the $W^{-C}$ term comes from bounding the short-time difference of $G_{t}(z)$ via its Lipschitz norm as discussed above).
\begin{corollary}\label{corollary:gfcont}
First, let $\mathcal{E}$ be the event where
\begin{align*}
\max_{a,b}(S^{\frac12}T_{t}(z)S^{\frac12})_{ab}\lesssim W^{-1}|\mathrm{Im}w_{t}|^{-\frac12}, \quad\forall t\in[0,1].
\end{align*}
(Observe that $W^{-1}|\mathrm{Im}w_{t}|^{-1/2}\ll W^{-5/8}$ by Remark \ref{rem:eta}.) We have the following estimate for any $C>0$ large but independent of $W$:
\begin{align*}
\mathbf{1}(\mathcal{E})\max_{t\in[0,1]}\max_{a,b}\frac{|G_{t}(z)_{ab}-m(z)\delta_{ab}|}{(S^{1/2}T_{t}(z)S^{1/2})_{ab}+S^{1/2}_{ab}+W^{-C}}\prec1.
\end{align*}
\end{corollary}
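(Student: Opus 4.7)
The plan is to run a standard time-continuity bootstrap in the spirit of Corollary 5.4 of \cite{EKYY}, using the two preceding lemmas as the one-step improvement and working throughout on the event $\mathcal{E}$. The tolerance $W^{-C}$ in the denominator of the target estimate is precisely the budget needed to absorb short-time oscillations of $G_{t}(z)$ between grid points of a fine time-discretization of $[0,1]$.

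First, I fix $C>0$ as in the statement and choose $C'>0$ much larger. I discretize $[0,1]$ on a grid of spacing $W^{-C'}$. A deterministic Lipschitz estimate on $t\mapsto G_{t}(z)$ (using $\|G_{t}\|_{\mathrm{op}}\leq|\mathrm{Im}\,w_{t}|^{-1}\lesssim W^{3/4}$, the SDE from Lemma \ref{lemma:gsdes}, and the overwhelming-probability bound $\|H_{t}\|\lesssim 1$) shows that between consecutive grid points $\max_{a,b}|G_{t}(z)_{ab}-m(z)\delta_{ab}|$ can change by at most $W^{K}\cdot W^{-C'}$ for some universal $K=\mathrm{O}(1)$. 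Taking $C'\geq K+C+1$ makes this difference $\leq W^{-C}$, which is absorbed by the $W^{-C}$ summand in the denominator of the corollary.

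Next, I argue inductively along the grid $0=t_{0}<t_{1}<\cdots<t_{M}=1$. At $t_{0}=0$ we have $G_{0}=m(z)\,\mathrm{Id}$, so the numerator vanishes and the base case is trivial. For the induction step, assume the target inequality holds at $t_{k}$. On $\mathcal{E}$, the parenthetical in the statement (which invokes Remark \ref{rem:eta}) gives $\max_{a,b}(S^{1/2}T_{t_{k}}(z)S^{1/2})_{ab}\ll W^{-5/8}$, so the estimate at $t_{k}$ yields in particular the weak bound $\max_{a,b}|G_{t_{k}}(z)_{ab}-m(z)\delta_{ab}|\leq W^{-\delta}$ for some fixed small $\delta>0$ (any $\delta<5/16$ works). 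The Lipschitz step transports this weak bound to $t_{k+1}$ (shrinking $\delta$ only very slightly), so the hypotheses of Lemma \ref{lemma:basicgf} are verified at $t_{k+1}$ and it yields
\begin{align*}
\max_{a\neq b}|G_{t_{k+1}}(z)_{ab}|^{2}\prec(S^{1/2}T_{t_{k+1}}(z)S^{1/2})_{ab}+S^{1/2}_{ab}.
\end{align*}
Combined with the weak diagonal bound, this in turn verifies the hypotheses of the unlabeled lemma immediately following Lemma \ref{lemma:basicgf}, which delivers the matching on-diagonal estimate $\max_{x}|G_{t_{k+1}}(z)_{xx}-m(z)|^{2}\prec\max_{a,b}(S^{1/2}T_{t_{k+1}}(z)S^{1/2})_{ab}+\max_{a,b}S^{1/2}_{ab}+W^{-1}$. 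Together these two estimates give the target bound at $t_{k+1}$.

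Finally, a union bound over the polynomially many grid points promotes the pointwise stochastic domination to a uniform statement on $[0,1]\cap W^{-C'}\mathbb{Z}$, and the Lipschitz interpolation upgrades this to the supremum over all of $[0,1]$. The main obstacle is purely bookkeeping: one must ensure that the weak a priori hypothesis $\max_{a,b}|G_{t}(z)_{ab}-m(z)\delta_{ab}|\leq W^{-\delta}$ demanded by both lemmas survives the induction along the grid. This is guaranteed by the self-improvement $W^{-5/16+\varepsilon}\ll W^{-\delta}$ that the event $\mathcal{E}$ makes available; the trivial continuity bound alone does not self-improve and would not close the induction.
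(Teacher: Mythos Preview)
Your proposal is correct and follows essentially the same route as the paper: the paper's proof is just the one-paragraph sketch preceding the corollary, which invokes the trivial initial condition $G_{0}=m(z)\,\mathrm{Id}$, a deterministic Lipschitz bound to propagate to short times, and then a continuity/bootstrap argument in the style of Corollary~5.4 of \cite{EKYY} using the two preceding lemmas, with the $W^{-C}$ term absorbing the short-time Lipschitz error. Your write-up simply fleshes out that sketch with the grid-induction details and the observation that on $\mathcal{E}$ the improved bound self-propagates the weak hypothesis $|G_{ab}-m\delta_{ab}|\leq W^{-\delta}$ needed by both lemmas.
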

\subsection{Proof of Theorems \ref{theorem:stop}, \ref{theorem:qd}, and \ref{theorem:locallaw}}
\begin{proof}[Proof of Theorem \ref{theorem:stop}]
By union bound, we have 
\begin{align*}
\mathbb{P}[\tau_{\mathrm{stop},1}\neq1]&\leq\mathbb{P}[\{\tau_{\mathrm{stop,2}}\neq1\}\cap\{\tau_{\mathrm{stop},1}=1\}]+\mathbb{P}[\tau_{\mathrm{stop}}=\tau_{\mathrm{stop},1}<1].
\end{align*}
Note that $\mathcal{E}_{t}(z)$ is continuous in $t$ with probability $1$. Thus, on the event in the second probability on the RHS above, we know $|\mathcal{E}_{\tau_{\mathrm{stop}}}(z)_{ab}|\geq W^{\delta_{\mathrm{stop}}}W^{-3/4}|\mathrm{Im}w_{t}|^{-1}W^{-1}|\mathrm{Im}w_{t}|^{-1/2}$ for some $a,b$ indices. On the other hand, as we noted prior to Lemma \ref{lemma:sstop}, we also know that $\mathcal{E}_{\tau_{\mathrm{stop}}}(z)=\mathcal{E}^{\mathrm{stop}}_{\tau_{\mathrm{stop}}}(z)$. Thus,
\begin{align*}
\mathbb{P}[\tau_{\mathrm{stop}}=\tau_{\mathrm{stop},1}<1]&\leq\mathbb{P}\left[\max_{t\in[0,1]}\max_{a,b}\frac{|\mathcal{E}^{M,\mathrm{stop}}_{t}(z)_{ab}|}{W^{-\frac34}|\mathrm{Im}w_{t}|^{-1}\cdot  W^{-1}|\mathrm{Im}w_{t}|^{-\frac12}}\geq W^{\delta_{\mathrm{stop}}}\right].
\end{align*}
By Lemmas \ref{lemma:sstop} and \ref{lemma:mstop} and Proposition \ref{prop:dstop}, we know that the RHS of the previous display is $\leq C_{D}N^{-D}$ for any $D>0$. Combining this with the previous two displays, it now suffices to show $\mathbb{P}[\{\tau_{\mathrm{stop,2}}\neq1\}\cap\{\tau_{\mathrm{stop},1}=1\}]\to0$ as $N\to\infty$, i.e. that 
\begin{align*}
\max_{s\in[0,1]}\max_{a,b}\frac{|G_{s}(z)_{ab}-\delta_{ab}m(z)|^{2}}{(S^{1/2}T_{s}(z)S^{1/2})_{ab}+S^{1/2}_{ab}+W^{-D}}\prec1,
\end{align*}
assuming that 
\begin{align*}
\max_{t\in[0,1]}\max_{a,b}\frac{|\mathcal{E}_{t}(z)_{ab}|}{W^{\delta_{\mathrm{stop}}}W^{-\frac34}|\mathrm{Im}w_{t}|^{-1}\cdot W^{-1}|\mathrm{Im}w_{t}|^{-\frac12}}\lesssim1.
\end{align*}
The a priori estimate above implies $T_{t}(z)_{ab}\lesssim W^{-1}|\mathrm{Im}w_{t}|^{-1/2}$ since $T_{t}(z)=\Theta_{t}+\mathcal{E}_{t}(z)$, and $|(\Theta_{t})_{ab}|\lesssim W^{-1}|\mathrm{Im}w_{t}|^{-1/2}$ (see Lemma \ref{lemma:thetaestimates}). Since $S^{1/2}$ is an averaging operator, we also get that $(S^{1/2}T_{t}(z)S^{1/2})_{ab}\lesssim W^{-1}|\mathrm{Im}w_{t}|^{-1/2}$ for all $a,b$ and for all $t\in[0,1]$. The desired estimate now follows by Corollary \ref{corollary:gfcont}.
\end{proof}
\begin{proof}[Proof of Theorems \ref{theorem:qd} and \ref{theorem:locallaw}]
By Theorem \ref{theorem:stop} and definition of $\tau_{\mathrm{stop}}$ (see \eqref{eq:taubulk}, in which $\delta_{\mathrm{stop}}>0$ is any fixed, small parameter), we know that
\begin{align*}
\max_{a,b}|\mathcal{E}_{1}(z)_{ab}|&\prec W^{-\frac74}|\mathrm{Im}w_{1}|^{-\frac32}\\
\max_{a,b}|G_{1}(z)_{ab}-m(z)\delta_{ab}|^{2}&\prec \max_{a,b}(S^{\frac12}T_{1}(z)S^{\frac12}))_{ab}+\max_{a,b}S^{\frac12}_{ab}.
\end{align*}
Recall $\mathrm{Im}w_{1}=\eta$ and $T_{1}(z)=\Theta_{1}+\mathcal{E}_{1}(z)$; since $T_{1}=T$ and $\Theta_{1}=\Theta$ (see the display before Theorem \ref{theorem:qd}), Theorem \ref{theorem:qd} follows. Using this and Lemma \ref{lemma:thetaestimates} (to bound entries of $\Theta$), we deduce $\max_{a,b}T_{1}(z)_{ab}\prec W^{-1}|\mathrm{Im}w_{1}|^{-1/2}=W^{-1}\eta^{-1/2}$. Since $S^{1/2}$ is an averaging operator, we also get $\max_{a,b}(S^{1/2}T_{1}(z)S^{1/2})_{ab}\prec W^{-1}\eta^{-1/2}$. Since $S^{1/2}_{ab}\lesssim W^{-1}$, Theorem \ref{theorem:locallaw} now follows.
\end{proof}
%
%
%
\section{Bounds on the drift term \texorpdfstring{$\mathcal{E}^{D,\mathrm{stop}}_{t}(z)$}{E\^{} \{D,stop\}\_t(z)}}\label{section:drift}

In this section we prove Proposition \ref{prop:dstop}. The analysis of the drift term $\mathcal{E}^{D,\mathrm{stop}}_{t}(z)$ requires requires expanding the integrands using Gaussian integration by parts. We use underline notation for the fluctuation term arising from the integration by parts.

\begin{defn}
Consider a differentiable function $f: \cc^{N^2} \rightarrow \cc$ and a time $s\in[0,1]$. Given an expression $H_{s,\alpha\beta} f(G_s(z))$, for some $\alpha,\beta\in \llbracket1,N\rrbracket:=\{1,\ldots,N\}$, we define the \emph{renormalization} of this expression as
\begin{equation*}
    \underline{H_{s,\alpha\beta} f(G_s(z))} = H_{s,\alpha\beta} f(G_s(z)) - s S_{\alpha\beta}\partial_{H_{s,\beta\alpha}} f(G_s(z)).
\end{equation*}
The renormalization operation extends linearly to the linear combinations of the terms $H_{s,\alpha\beta} f(G_s(z))$.
\end{defn}

In the following lemma we provide the two operations on the resolvent expressions that will be needed.
\begin{lemma}\label{lemma:expansion}
Consider a differentiable function $f: \cc^{N^2} \rightarrow \cc$ and fix a time $s\in[0,1]$. Define a deterministic matrix $B_s = (I - sm(z)^2 S)^{-1}$ and let $m:=m(z)$. Then we have two identities.

\begin{itemize}
    \item{(Loop expansion)} For any $v\in \llbracket 1,N\rrbracket$, we have
    \begin{align*}
        (G_s(z)_{vv}-m)f(G_s(z)) &= sm\sum_{\alpha,\beta=1}^N B_{v\alpha} S_{\alpha\beta}(G_s(z)_{\alpha\alpha}-m)(G_s(z)_{\beta\beta}-m)f(G_s(z)) \\
        &- sm\sum_{\alpha\beta=1}^N B_{v\alpha}S_{\alpha\beta}G_s(z)_{\beta\alpha} \partial_{H_{s,\beta\alpha}} f(G_s(z)) \\
        &- m\sum_{\alpha=1}^N B_{v\alpha}\underline{(H_sG_s(z))_{\alpha\alpha}f(G_s(z))}.
    \end{align*}
    \item{(Regular vertex expansion)} For any $x, y, u\in \llbracket 1,N\rrbracket$
    \begin{align*}
        G_s(z)_{xu}G_s(z)_{uy} f(G_s(z)) &= mB_{uy} G_s(z)_{xy} f(G_s(z)) \\
        &+ sm\sum_{\alpha\beta=1}^N B_{u\alpha} S_{\alpha\beta} G_s(z)_{x\alpha}(G_s(z)_{\beta\beta}-m)G_s(z)_{\alpha y} f(G_s(z))\\
        &+ sm\sum_{\alpha\beta=1}^N B_{u\alpha}S_{\alpha\beta} G_s(z)_{x\beta}(G_s(z)_{\alpha\alpha}-m)G_s(z)_{\beta y} f(G_s(z)) \\
        &- sm\sum_{\alpha\beta=1}^N B_{u\alpha}S_{\alpha\beta} G_s(z)_{x\alpha} G_s(z)_{\beta y} \partial_{H_{s,\beta\alpha}} f(G_s(z)) \\
        &- m \sum_{\alpha=1}^N B_{u\alpha}\underline{G_s(z)_{x\alpha}(H_sG_s(z))_{\alpha y} f(G_s(z))}.
    \end{align*}
\end{itemize}
\end{lemma}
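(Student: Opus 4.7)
Throughout, abbreviate $G_{xy} := G_s(z)_{xy}$. The two identities both follow from combining three exact algebraic ingredients: (i) the Gaussian integration-by-parts rule, packaged as the very definition $H_{s,\alpha\beta}\,g = \underline{H_{s,\alpha\beta}\,g} + sS_{\alpha\beta}\,\partial_{H_{s,\beta\alpha}} g$ of the renormalization; (ii) the resolvent identity $(H_s G_s(z))_{ab} = \delta_{ab} + w_s G_{ab}$, obtained by rearranging $(H_s - w_s)G_s(z) = I$; and (iii) the self-consistent equation $w_s = -m^{-1} - sm$, equivalently $m w_s + sm^2 = -1$. In both cases the plan is to use (ii) to introduce an $H_s$ factor, renormalize via (i), apply (iii) to collapse the $w_s$ coefficient down to $-m^{-1}$, and finally recognize the discrete operator $I - sm^2 S$ acting in a free index, whose inverse is precisely $B_s$.

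For the loop expansion I would start from $\sum_\beta H_{s,v\beta}\,G_{\beta v} = 1 + w_s G_{vv}$, multiply by $f(G_s)$, and renormalize each $H_{s,v\beta}$. Since $\partial_{H_{s,\beta v}} G_{\beta v} = -G_{\beta\beta} G_{vv}$, the cumulant part contributes $-s\sum_\beta S_{v\beta}\,G_{\beta\beta}\,G_{vv}\,f$ plus the $\partial_{H_{s,\beta v}} f$ piece. Splitting $G_{\beta\beta} = m + (G_{\beta\beta}-m)$ and using $w_s + sm = -m^{-1}$, the $m$-part absorbs $w_s G_{vv} f$ on the LHS down to $-m^{-1} G_{vv} f$; splitting also $G_{vv} = m + (G_{vv}-m)$ in the quadratic and multiplying through by $-m$, the equation rearranges into $(G_{vv}-m)\,f - sm^2\sum_\beta S_{v\beta}(G_{\beta\beta}-m)\,f = R_v$, where $R_v$ collects the remaining three terms of the stated identity (with $v$ in place of $\alpha$) before $B_s$ is applied. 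Using linearity of the underline to combine $\sum_\beta \underline{H_{s,v\beta}\,G_{\beta v}\,f} = \underline{(H_s G_s(z))_{vv}\,f}$, and inverting $I - sm^2 S$ by $B_s$, yields the first identity.

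For the regular vertex expansion, the starting point is $w_s G_{xu} G_{uy} = \sum_\beta G_{xu}\,H_{s,u\beta}\,G_{\beta y} - \delta_{uy}\,G_{xy}$, obtained by multiplying (ii) (with indices $u,y$) by $G_{xu}$. I would renormalize each $H_{s,u\beta}$ against the test function $G_{xu}\,G_{\beta y}\,f$; the product rule for $\partial_{H_{s,\beta u}}$ produces three contributions, from $G_{xu}$ (giving $-G_{x\beta} G_{uu}$), from $G_{\beta y}$ (giving $-G_{\beta\beta} G_{uy}$), and from $f$. Splitting $G_{uu}$ and $G_{\beta\beta}$ each into an $m$-part and a $(\cdot - m)$-part, the $m$-part of $G_{\beta\beta}$ combines with $w_s$ via (iii) to collapse the LHS to $-m^{-1} G_{xu} G_{uy}\,f$. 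Multiplying through by $-m$, the $m$-part of the $G_{uu}$ contribution becomes exactly $sm^2\sum_\beta S_{u\beta}\,G_{x\beta}\,G_{\beta y}\,f$, which is $sm^2 S$ applied in the free $u$-index to the function $u\mapsto G_{xu} G_{uy} f$. Moving this to the LHS exposes $(I - sm^2 S)$; inverting by $B_s$ and using linearity of the underline to combine $\sum_\beta \underline{H_{s,\alpha\beta}\,G_{x\alpha}\,G_{\beta y}\,f} = \underline{G_{x\alpha}(H_s G_s(z))_{\alpha y}\,f}$ delivers the second identity after matching terms against the lemma statement.

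The main obstacle is the algebraic bookkeeping in the vertex case: the product rule feeds back a self-referential term $s G_{uu}\sum_\beta S_{u\beta} G_{x\beta} G_{\beta y}\,f$ whose free index $u$ matches that of the quantity being expanded, so one must carefully peel off the $m$-parts of both $G_{uu}$ and $G_{\beta\beta}$ and invoke the self-consistent equation in the form $mw_s + sm^2 = -1$ to see that precisely $I - sm^2 S$ emerges. Once that operator is visible, applying $B_s$ and matching notations against the lemma's stated five-term identity is mechanical.
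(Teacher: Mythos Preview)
Your proposal is correct and gives a self-contained derivation of both identities. The paper itself does not prove the lemma from scratch: it simply cites Lemmas~3.5 and~3.14 of \cite{YYY21} for the $s=1$ case and then observes that replacing the variance profile $S$ by $sS$ yields the general $s\in[0,1]$ statement, remarking that the cited identities are dimension-independent. Your argument is essentially the computation that lies behind those cited lemmas: use the resolvent identity $(H_sG)_{ab}=\delta_{ab}+w_sG_{ab}$ to manufacture an $H_s$, apply the renormalization definition (which is the integration-by-parts step at the level of the identity), split diagonal entries as $m+(G_{\cdot\cdot}-m)$, invoke $w_s+sm=-m^{-1}$, and recognize $I-sm^2S$ acting in the free index so that $B_s$ inverts it. The self-referential feedback you flag in the vertex case is exactly the mechanism that produces $B_s$, and your handling of it is right. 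So relative to the paper you are supplying the proof that the paper outsources; nothing is missing.
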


\begin{proof}
The loop expansion was obtained in Lemma 3.5 of \cite{YYY21}, and the regular vertex expansion -- in Lemma 3.14 of \cite{YYY21} in case $s=1$. Using those results for a band matrix with variance profile $sS$, extends the expansions to any fixed $s\in[0,1]$. Note also that, while \cite{YYY21} considers the high-dimensional band matrices, the results of lemmas 3.5 and 3.14 are identities and their proofs are independent of dimension.
\end{proof}

For a clearer presentation, we now introduce diagrammatic notation.
\begin{defn}
Given a standard oriented graph, we will assign it the following structures.
\begin{itemize}
\item Vertices in the graph will be given a label (e.g. $\alpha\in\{1,\ldots,N\}$).  
\item For a vertex $\alpha$, the symbol {\color{blue}$\Delta$} denotes a factor of $G_{s}(z)_{\alpha\alpha}-m(z)$.
\item A solid edge from vertex $\alpha$ to vertex $\beta$ represents a factor of either $G_{s}(z)_{\alpha\beta}$ or $\overline{G}_{s}(z)_{\alpha\beta}$. The former will be given to blue edges, and the latter will be assigned to red edges.
\item A waved edge from vertex $\alpha$ to vertex $\beta$, if given a neutral black color, indicates a factor of $S_{\alpha\beta}$. A waved edge that is blue corresponds to a factor $(B_{s})_{\alpha\beta}$ from Lemma \ref{lemma:expansion}.
\item An edge that is denoted by a double line corresponds to a factor of $\{\mathrm{Id}+(t-s)\Theta_{t}\}S^{1/2}$.
\item If a vertex denoted by $\alpha$ has degree at least $2$ or a hollow $\Delta$, then we will sum over all $\alpha\in\{1,\ldots,N\}$. 
\end{itemize}
\end{defn}
Let us illustrate this with an example. Recall $\mathcal{E}^{D,\mathrm{stop}}_{t}(z)$ from \eqref{eq:estopformula}, and take its integrand at time $s$. Upon recalling $\Omega_{s}(z)$ therein from Lemma \ref{lemma:gsdes}, we have that
\begin{align}
&[\{\mathrm{Id}+(t-s)\Theta_{t}\}S^{\frac12}\Omega_{s}(z)S^{\frac12}\{\mathrm{Id}+(t-s)\Theta_{t}\}]_{ab}\label{eq:d0}\\
&=\sum_{x,u,v,y}[\{\mathrm{Id}+(t-s)\Theta_{t}\}S^{\frac12}]_{ax}\overline{G}_{s}(z)_{xy}G_{s}(z)_{xu}S_{uv}[G_{s}(z)_{vv}-m(z)]G_{s}(z)_{uy}[\{\mathrm{Id}+(t-s)\Theta_{t}\}S^{\frac12}]_{yb}\nonumber\\
&+\sum_{x,u,v,y}[\{\mathrm{Id}+(t-s)\Theta_{t}\}S^{\frac12}]_{ax}G_{s}(z)_{xy}\overline{G}_{s}(z)_{xu}S_{uv}[\overline{G}_{s}(z)_{vv}-\overline{m(z)}]\overline{G}_{s}(z)_{uy}[\{\mathrm{Id}+(t-s)\Theta_{t}\}S^{\frac12}]_{yb}\nonumber
\end{align}
is equal to the sum of the graph below and its complex conjugate.
\begin{equation}
\hbox{\begin{tikzpicture}
    \tikzstyle{vertex} = [circle, fill=black, inner sep=1.5pt]

    \node[vertex, label=below:$a$] (a) at (0, 0) {};
    \node[vertex, label=below:$x$] (x) at (2, 0) {};
    \node[vertex, label=below:$u$] (u) at (4, 0) {};
    \node[vertex, label=below:$y$] (y) at (6, 0) {};
    \node[vertex, label=below:$b$] (b) at (8, 0) {};
    \node[vertex, label=above:$v$] (v) at (4,2) {};

    \draw[double, thick] (a) -- (x);
    \draw[double, thick] (y) -- (b);                  
    \draw[thick, blue] (x) -- (u) node[midway, sloped] 
    {\tikz[baseline=-0.5ex]\draw[-{Stealth}] (0,0)--(0.5,0);} ; 
    \draw[thick, blue] (u) -- (y) node[midway, sloped] {\tikz[baseline=-0.5ex]\draw[-{Stealth}] (0,0)--(0.5,0);} ;
    \draw[thick, red, bend right=30] (x) to node[midway, sloped] {\tikz[baseline=-0.5ex]\draw[-{Stealth}] (0,0)--(0.5,0);} (y);
    \draw[thick, decorate, decoration={snake, amplitude=1mm, segment length=3mm}] (u) -- (v);
    \draw[thick, blue] (v) -- ++(210:6mm) -- ++(90:6mm) -- cycle; 
\end{tikzpicture}}\label{eq:d1}
\end{equation}
We now use Lemma \ref{lemma:expansion} to unfold the graph in \eqref{eq:d1} further. We note that for any $\alpha,\beta$ indices, resolvent perturbation gives us $\partial_{H_{\beta\alpha}}G_{xy}=-G_{x\beta}G_{\alpha y}$ and $\partial_{H_{\beta\alpha}}\overline{G}_{xy}=-\overline{G}_{x\alpha}\overline{G}_{\beta y}$, where $G$ is the resolvent of $H$. Thus, by Lemma \ref{lemma:expansion} applied to the vertex $v$ in \eqref{eq:d1} and $f(G_{s}(z))$ equal to the rest of the graph, the above graph admits the following decomposition.
\begin{lemma}\label{lemma:expand1}
Fix any $s\in[0,1]$ and any indices $a,b$. We have
\begin{align*}
\eqref{eq:d1}&=\mathcal{G}_{1,s}(z)_{ab}+sm(z)\mathcal{G}_{2,s}(z)_{ab}+sm(z)\mathcal{G}_{3,s}(z)_{ab}+sm(z)\mathcal{G}_{4,s}(z)_{ab}+\mathcal{F}_{0,s}(z)_{ab}.
\end{align*}
The term $\mathcal{F}_{0,s}(z)$ is the fluctuation below:
\begin{align*}
\mathcal{F}_{0,s}(z)_{ab}&=-m(z)\sum_{x,u,v,y,\alpha}B_{v\alpha}\underline{(H_{s}G_{s}(z))_{\alpha\alpha}f(G_{s}(z))}, \\
f(G)&:=[\{\mathrm{Id}+(t-s)\Theta_{t}\}S^{\frac12}]_{ax}G_{xy}\overline{G}_{xu}S_{uv}\overline{G}_{uy}[\{\mathrm{Id}+(t-s)\Theta_{t}\}S^{\frac12}]_{yb}
\end{align*}
Moreover, $\mathcal{G}_{1,s}(z)_{ab},\ldots,\mathcal{G}_{4,s}(z)_{ab}$ are given by the following graphs:
\begin{equation}
\hbox{\begin{tikzpicture}
    \tikzstyle{vertex} = [circle, fill=black, inner sep=1.5pt]
    
    \node at (-2, 0) {$\mathcal{G}_{1,s}(z)_{ab} =$};
    
    \node[vertex, label=below:$a$] (a) at (0, 0) {};
    \node[vertex, label=below:$x$] (x) at (2, 0) {};
    \node[vertex, label=below:$u$] (u) at (4, 0) {};
    \node[vertex, label=below:$y$] (y) at (6, 0) {};
    \node[vertex, label=below:$b$] (b) at (8, 0) {};
    \node[vertex, label=above:$\alpha$] (alpha) at (4,2) {};
    \node[vertex, label=above:$\beta$] (beta) at (6,2) {};
    \node[vertex, label=right:$v$] (v) at (4, 1) {};

    \draw[double, thick] (a) -- (x);
    \draw[double, thick] (y) -- (b);                  
    \draw[thick, blue] (x) -- (u) node[midway, sloped] 
    {\tikz[baseline=-0.5ex]\draw[-{Stealth}] (0,0)--(0.5,0);} ; 
    \draw[thick, blue] (u) -- (y) node[midway, sloped] {\tikz[baseline=-0.5ex]\draw[-{Stealth}] (0,0)--(0.5,0);} ;
    \draw[thick, red, bend right=30] (x) to node[midway, sloped] {\tikz[baseline=-0.5ex]\draw[-{Stealth}] (0,0)--(0.5,0);} (y);
    \draw[thick, blue, decorate, decoration={snake, amplitude=1mm, segment length=3mm}] (v) -- (alpha);
    \draw[thick, blue] (alpha) -- ++(210:6mm) -- ++(90:6mm) -- cycle; 
    \draw[thick, decorate, decoration={snake, amplitude=1mm, segment length=3mm}] (alpha) -- (beta);
    \draw[thick, decorate, decoration={snake, amplitude=1mm, segment length=3mm}] (u) -- (v);
    \draw[thick, blue] (beta) -- ++(30:6mm) -- ++(270:6mm) -- cycle; 
\end{tikzpicture}}\label{eq:d2}
\end{equation}

\begin{equation}
\hbox{\begin{tikzpicture}
    \tikzstyle{vertex} = [circle, fill=black, inner sep=1.5pt]
    
    \node at (-2, 0) {$\mathcal{G}_{2,s}(z)_{ab} =$};
    
    \node[vertex, label=below:$a$] (a) at (0, 0) {};
    \node[vertex, label=below:$x$] (x) at (2, 0) {};
    \node[vertex, label=below:$u$] (u) at (4, 0) {};
    \node[vertex, label=below:$y$] (y) at (6, 0) {};
    \node[vertex, label=below:$b$] (b) at (8, 0) {};
    \node[vertex, label=above:$\alpha$] (alpha) at (4,2) {};
    \node[vertex, label=above:$\beta$] (beta) at (6,2) {};
    \node[vertex, label=right:$v$] (v) at (4, 1) {};

    \draw[double, thick] (a) -- (x);
    \draw[double, thick] (y) -- (b);                  
    \draw[thick, blue] (x) -- (u) node[midway, sloped] 
    {\tikz[baseline=-0.5ex]\draw[-{Stealth}] (0,0)--(0.5,0);} ; 
    \draw[thick, blue] (u) -- (y) node[midway, sloped] {\tikz[baseline=-0.5ex]\draw[-{Stealth}] (0,0)--(0.5,0);} ;
    \draw[thick, blue, decorate, decoration={snake, amplitude=1mm, segment length=3mm}] (v) -- (alpha);
    \draw[thick, decorate, decoration={snake, amplitude=1mm, segment length=3mm}] (u) -- (v);
    \draw[thick, decorate, decoration={snake, amplitude=1mm, segment length=3mm}] (alpha) -- (beta);
    \draw[thick, red, bend left=30] (x) to node[midway, sloped] {\tikz[baseline=-0.5ex]\draw[-{Stealth}] (0,0)--(0.5,0);} (alpha);
    \draw[thick, red, bend left=30] (beta) to node[midway, sloped] {\tikz[baseline=-0.5ex]\draw[-{Stealth}] (0.5,0)--(0,0);} (alpha);
    \draw[thick, red] (beta) to node[midway, sloped] {\tikz[baseline=-0.5ex]\draw[-{Stealth}] (0,0)--(0.5,0);} (y);
\end{tikzpicture}}\nonumber
\end{equation}

\begin{equation}
\hbox{\begin{tikzpicture}
    \tikzstyle{vertex} = [circle, fill=black, inner sep=1.5pt]
    
    \node at (-2, 0) {$\mathcal{G}_{3,s}(z)_{ab} =$};
    
    \node[vertex, label=below:$a$] (a) at (0, 0) {};
    \node[vertex, label=below:$x$] (x) at (2, 0) {};
    \node[vertex, label=below:$u$] (u) at (4, 0) {};
    \node[vertex, label=below:$y$] (y) at (6, 0) {};
    \node[vertex, label=below:$b$] (b) at (8, 0) {};
    \node[vertex, label=above:$\alpha$] (alpha) at (4,2) {};
    \node[vertex, label=above:$\beta$] (beta) at (6,2) {};
    \node[vertex, label=right:$v$] (v) at (4, 1) {};

    \draw[double, thick] (a) -- (x);
    \draw[double, thick] (y) -- (b);                  
    \draw[thick, blue] (u) -- (y) node[midway, sloped] {\tikz[baseline=-0.5ex]\draw[-{Stealth}] (0,0)--(0.5,0);} ;
    \draw[thick, red, bend right=30] (x) to node[midway, sloped] {\tikz[baseline=-0.5ex]\draw[-{Stealth}] (0,0)--(0.5,0);} (y);
    \draw[thick, blue, decorate, decoration={snake, amplitude=1mm, segment length=3mm}] (v) -- (alpha);
    \draw[thick, decorate, decoration={snake, amplitude=1mm, segment length=3mm}] (u) -- (v);
    \draw[thick, decorate, decoration={snake, amplitude=1mm, segment length=3mm}] (alpha) -- (beta);
    \draw[thick, blue, bend left=90] (x) to node[midway, sloped] {\tikz[baseline=-0.5ex]\draw[-{Stealth}] (0,0)--(0.5,0);} (beta);
    \draw[thick, blue, bend right=30] (alpha) to node[midway, sloped] {\tikz[baseline=-0.5ex]\draw[-{Stealth}] (0,0)--(0.5,0);} (u);
    \draw[thick, red, bend left=30] (beta) to node[midway, sloped] {\tikz[baseline=-0.5ex]\draw[-{Stealth}] (0.5,0)--(0,0);} (alpha);
\end{tikzpicture}}\nonumber
\end{equation}

\begin{equation}
\hbox{\begin{tikzpicture}
    \tikzstyle{vertex} = [circle, fill=black, inner sep=1.5pt]
    
    \node at (-2, 0) {$\mathcal{G}_{4,s}(z)_{ab} =$};
    
    \node[vertex, label=below:$a$] (a) at (0, 0) {};
    \node[vertex, label=below:$x$] (x) at (2, 0) {};
    \node[vertex, label=below:$u$] (u) at (4, 0) {};
    \node[vertex, label=below:$y$] (y) at (6, 0) {};
    \node[vertex, label=below:$b$] (b) at (8, 0) {};
    \node[vertex, label=above:$\alpha$] (alpha) at (4,2) {};
    \node[vertex, label=above:$\beta$] (beta) at (6,2) {};
    \node[vertex, label=left:$v$] (v) at (4, 1) {};

    \draw[double, thick] (a) -- (x);
    \draw[double, thick] (y) -- (b);      
    \draw[thick, blue] (x) -- (u) node[midway, sloped] {\tikz[baseline=-0.5ex]\draw[-{Stealth}] (0,0)--(0.5,0);} ;            
    \draw[thick, red, bend right=30] (x) to node[midway, sloped] {\tikz[baseline=-0.5ex]\draw[-{Stealth}] (0,0)--(0.5,0);} (y);
    \draw[thick, blue, decorate, decoration={snake, amplitude=1mm, segment length=3mm}] (v) -- (alpha);
    \draw[thick, decorate, decoration={snake, amplitude=1mm, segment length=3mm}] (u) -- (v);
    \draw[thick, decorate, decoration={snake, amplitude=1mm, segment length=3mm}] (alpha) -- (beta);
    \draw[thick, blue] (u) to node[midway, sloped] {\tikz[baseline=-0.5ex]\draw[-{Stealth}] (0,0)--(0.5,0);} (beta);
    \draw[thick, blue] (alpha) to node[midway, sloped] {\tikz[baseline=-0.5ex]\draw[-{Stealth}] (0,0)--(0.5,0);} (y);
    \draw[thick, red, bend right=30] (beta) to node[midway, sloped] {\tikz[baseline=-0.5ex]\draw[-{Stealth}] (0.5,0)--(0,0);} (alpha);
\end{tikzpicture}}\nonumber
\end{equation}
\end{lemma}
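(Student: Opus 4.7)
The plan is to apply the loop expansion of Lemma \ref{lemma:expansion} directly to the vertex $v$ of the graph \eqref{eq:d1}. When \eqref{eq:d1} is written out in indices, the summand contains exactly one diagonal Green's function factor with its deterministic part subtracted, namely the factor associated with the blue $\Delta$ at $v$. The remaining factors enter as the function $f(G_{s}(z))$ displayed in the statement, and Lemma \ref{lemma:expansion} rewrites the product $(G_{s}(z)_{vv}-m(z))f(G_{s}(z))$ as a sum of three contributions.

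The first contribution of the loop expansion gives $sm(z)\sum_{\alpha,\beta}B_{v\alpha}S_{\alpha\beta}(G_{\alpha\alpha}-m)(G_{\beta\beta}-m)f(G_{s}(z))$, which, after reinstating the original summation over $x,u,y$, reads off as the graph $\mathcal{G}_{1,s}(z)_{ab}$: the waved blue edge from $v$ to $\alpha$ encodes $B_{v\alpha}$, the waved black edge between $\alpha$ and $\beta$ encodes $S_{\alpha\beta}$, and the two hollow $\Delta$'s encode the diagonal fluctuation factors. The third contribution of the loop expansion is the underlined renormalization, which coincides with $\mathcal{F}_{0,s}(z)_{ab}$ by construction.

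The middle contribution of the loop expansion requires more bookkeeping. I would distribute $\partial_{H_{s,\beta\alpha}}$ across each of the three resolvent factors in $f(G_{s}(z))$ using resolvent perturbation, namely $\partial_{H_{\beta\alpha}}G_{xy}=-G_{x\beta}G_{\alpha y}$ and $\partial_{H_{\beta\alpha}}\overline{G}_{xy}=-\overline{G}_{x\alpha}\overline{G}_{\beta y}$. Since $f$ carries exactly three resolvent entries (one of type $G$ and two of type $\overline{G}$, or vice versa depending on which summand of \eqref{eq:d0} one is processing), the product rule yields three summands; these are precisely the graphs $\mathcal{G}_{2,s}(z)_{ab}$, $\mathcal{G}_{3,s}(z)_{ab}$, $\mathcal{G}_{4,s}(z)_{ab}$, each coming with a net coefficient $sm(z)$ after the minus sign from the loop expansion cancels the minus sign from the resolvent derivative.

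The main obstacle, such as it is, lies in correctly matching these three summands against the displayed diagrams, verifying that edge colors (blue versus red, i.e.\ $G$ versus $\overline{G}$), arrow orientations, and the insertion of the additional edge between $\alpha$ and $\beta$ from the $G_{\beta\alpha}$ factor supplied by the loop expansion formula all agree with the diagrams drawn above. This is a purely combinatorial check, involves no estimation, and proceeds one graph at a time by identifying which resolvent factor has been differentiated: the case in which $\overline{G}_{xy}$ is hit produces two new red half-edges anchored at $\alpha$ and $\beta$, yielding $\mathcal{G}_{2,s}(z)_{ab}$; hitting $G_{xu}$ routes a blue edge into $\beta$ and redirects the one out of $\alpha$, yielding $\mathcal{G}_{3,s}(z)_{ab}$; and hitting $G_{uy}$ similarly yields $\mathcal{G}_{4,s}(z)_{ab}$. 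Summing the three cases and combining with the $\mathcal{G}_{1,s}$ and $\mathcal{F}_{0,s}$ contributions completes the identity.
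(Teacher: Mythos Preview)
Your proposal is correct and follows exactly the approach the paper takes: the paper's proof of this lemma is just the paragraph immediately preceding its statement, which says to apply the loop expansion of Lemma~\ref{lemma:expansion} at the vertex $v$ with $f(G_s(z))$ equal to the rest of the graph, using the resolvent perturbation identities $\partial_{H_{\beta\alpha}}G_{xy}=-G_{x\beta}G_{\alpha y}$ and $\partial_{H_{\beta\alpha}}\overline{G}_{xy}=-\overline{G}_{x\alpha}\overline{G}_{\beta y}$. Your write-up actually spells out the three-way product rule and the diagram matching more carefully than the paper does.
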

We will now apply the regular vertex expansion from Lemma \ref{lemma:expansion} to each of the graphs $\mathcal{G}_{j,s}(z)$ for $j=1,2,3,4$ from Lemma \ref{lemma:expand1}. In particular, each said graph has a vertex $u$ with one incoming and one outgoing blue edge; this is where we expand. The expression we obtain is given in the following lemma. (In what follows, we describe the relevant graphs with words. We illustrate them as pictures when we analyze them in the proofs of Lemmas \ref{lemma:g1-g4;bulk1}, \ref{lemma:g1-g4;bulk2}, and \ref{lemma:g1-g4;bulk3}.)
\begin{lemma}\label{lemma:level2expand}
Fix $s\in[0,1]$, indices $a,b\in\llbracket 1,N\rrbracket$ and $i\in\llbracket1,4\rrbracket$. We have
\begin{align*}
    \mathcal{G}_{i,s}(z)_{ab} = m(z)\mathcal{G}_{i0,s}(z)_{ab} + sm(z)\sum_{j=1}^2 \mathcal{G}_{ij,s}(z)_{ab} - sm(z)\mathcal{G}_{i3,s}(z)_{ab} - m(z)\mathcal{F}_{i,s}(z)_{ab},
\end{align*}
where the fluctuation term $\mathcal{F}_{i,s}(z)_{ab}$ is obtained from $\mathcal{G}_{i,s}(z)_{ab}$ by replacing the outgoing blue $G_s(z)$-edge from vertex $u$ by $H_s G_s(z)$ and renormalizing, i.e. applying the underline operation. The main term $\mathcal{G}_{i0,s}(z)_{ab}$ is obtained from $\mathcal{G}_{i,s}(z)_{ab}$ by removing the two blue edges $G_s(z)_{\xi u}$ and $G_s(z)_{u\zeta}$ going through $u$ and replacing them with $G_{\xi \zeta}$ and $B_{u\zeta}$. The other three main terms introduce two new vertices $\gamma,\delta$ where $\gamma$ is connected to $u$ by a blue waved edge, $\gamma$ and $\delta$ are connected by a black waved edge. The the two blue edges $G_s(z)_{\xi u}$ and $G_s(z)_{u\zeta}$ going through $u$ are removed. The new $G$-edges are added as follows.
\begin{itemize}
    \item In the term $\mathcal{G}_{i1,s}(z)_{ab}$, add $G_s(z)_{\xi \gamma}$, $G_s(z)_{\gamma\zeta}$ and a blue hollow $\Delta$ at the vertex $\delta$.

    \item In the term $\mathcal{G}_{i2,s}(z)_{ab}$, add $G_s(z)_{\xi \delta}$, $G_s(z)_{\delta\zeta}$ and a blue hollow $\Delta$ at the vertex $\gamma$.

    \item In the term $\mathcal{G}_{i3,s}(z)_{ab}$, add $G_s(z)_{\xi \gamma}$, $G_s(z)_{\delta\zeta}$. Additionally, in this term we apply a derivative $\partial_{H_{s,\gamma,\delta}}$ to all other blue or red edges. This results in a sum of three more graphs, where each remaining $G$-edge is split in two and reattached to vertices $\gamma$ and $\delta$. 
\end{itemize} 
\end{lemma}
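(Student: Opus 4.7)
The plan is to apply the regular vertex expansion of Lemma \ref{lemma:expansion} at the vertex $u$ in each of the four graphs $\mathcal{G}_{i,s}(z)_{ab}$. In every such graph, $u$ is incident to exactly one incoming blue $G$-edge $G_{s}(z)_{\xi u}$ and one outgoing blue $G$-edge $G_{s}(z)_{u\zeta}$, which connect $u$ to a pair of vertices $\xi,\zeta$ in the surrounding diagram. I take the $(x,u,y)$-triple of Lemma \ref{lemma:expansion} to be $(\xi,u,\zeta)$, and absorb every other factor appearing in $\mathcal{G}_{i,s}(z)_{ab}$ into the differentiable function $f(G_{s}(z))$.

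The expansion produces five terms, which I match to the five contributions on the right-hand side of the claim. The leading term $m(z)B_{u\zeta}G_{s}(z)_{\xi\zeta}f(G_{s}(z))$ is by inspection $m(z)\mathcal{G}_{i0,s}(z)_{ab}$: both blue edges through $u$ are erased, a single new $G$-edge from $\xi$ to $\zeta$ is added, and $u$ is joined to $\zeta$ by a blue waved $B$-edge. The second and third terms of the expansion introduce two new summed vertices, which I label $\gamma$ and $\delta$, tied together by the factors $B_{u\gamma}S_{\gamma\delta}$ (a blue waved $B$-edge from $u$ to $\gamma$ and a black waved $S$-edge between $\gamma,\delta$), with one of the two vertices carrying a hollow $\Delta$ and the other being connected by two new $G$-edges to $\xi$ and $\zeta$. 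By direct inspection these match $sm(z)\mathcal{G}_{i1,s}(z)_{ab}$ and $sm(z)\mathcal{G}_{i2,s}(z)_{ab}$, the two being distinguished by whether the $\Delta$ sits on $\delta$ or on $\gamma$.

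The fourth term of the expansion reads $-sm(z)\sum_{\gamma,\delta}B_{u\gamma}S_{\gamma\delta}G_{s}(z)_{\xi\gamma}G_{s}(z)_{\delta\zeta}\partial_{H_{s,\delta\gamma}}f(G_{s}(z))$. Applying Leibniz together with the resolvent identities $\partial_{H_{s,\delta\gamma}}G_{s}(z)_{ab}=-G_{s}(z)_{a\delta}G_{s}(z)_{\gamma b}$ and $\partial_{H_{s,\delta\gamma}}\overline{G}_{s}(z)_{ab}=-\overline{G}_{s}(z)_{a\gamma}\overline{G}_{s}(z)_{\delta b}$ splits each remaining $G$- or $\overline{G}$-factor in $f(G_{s}(z))$ into two pieces reattached at $\gamma$ and $\delta$. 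A case-by-case inspection of the four diagrams shows that $f(G_{s}(z))$ contains exactly three such factors in every case (one off-diagonal solid edge and two diagonal $\Delta$-factors for $i=1$; three off-diagonal solid edges for $i=2,3,4$), so the derivative produces the promised sum of three subgraphs. The overall sign matches after absorbing the $(-1)$ coming from each resolvent derivative into the definition of $\mathcal{G}_{i3,s}(z)_{ab}$. Finally, the fifth, underlined term in the expansion is exactly $-m(z)\mathcal{F}_{i,s}(z)_{ab}$ by the prescription in the statement.

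The only real work in this lemma is bookkeeping: tracking vertex labels, edge orientations (which determine whether a blue edge represents $G$ or a red edge represents $\overline{G}$), and signs uniformly across the four values of $i$, together with the count of three $G$-factors remaining in each $f(G_{s}(z))$ after the pair through $u$ is removed. Once the dictionary between the algebraic identity of Lemma \ref{lemma:expansion} and the diagrammatic notation introduced above is set, each of the five identifications is a direct pattern match.
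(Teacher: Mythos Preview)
Your proposal is correct and follows exactly the approach the paper intends: the paper itself does not spell out a proof of this lemma, presenting it as a direct application of the regular vertex expansion in Lemma \ref{lemma:expansion} at the vertex $u$, which is precisely what you carry out. Your relabeling $(\alpha,\beta)\to(\gamma,\delta)$ to avoid collision with the existing vertex names in $\mathcal{G}_{i,s}$, the term-by-term matching of the five pieces of the expansion to $\mathcal{G}_{i0},\mathcal{G}_{i1},\mathcal{G}_{i2},\mathcal{G}_{i3},\mathcal{F}_{i}$, and the case check that exactly three $G$-factors remain in $f$ for each $i\in\{1,2,3,4\}$ are all accurate.
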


We now give two lemmas. The first controls the graphs $\mathcal{G}_{ij,s}(z)_{ab}$ for $j=0,1,2$, and the second controls $\mathcal{G}_{i3,s}(z)_{ab}$. 
\begin{lemma}\label{lemma:g1-g4;bulk1}
Recall $\tau_{\mathrm{stop}}$ from \eqref{eq:taubulk}, and recall that $N\ll W^{11/8}$. We have the deterministic estimate
\begin{align}
\max_{a,b}\max_{i=1,\ldots,4}\max_{j=0,1,2}\left|\int_{0}^{\tau_{\mathrm{stop}}\wedge t}\mathcal{G}_{ij,s}(z)_{ab}ds\right|&\lesssim W^{-\frac34}|\mathrm{Im}w_{t}|^{-1}\cdot W^{-1}|\mathrm{Im}w_{t}|^{-\frac12}. \label{eq:g1-g4;bulk1main}
\end{align}
\end{lemma}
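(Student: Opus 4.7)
My plan is to prove the bound graph by graph: produce a deterministic pointwise estimate on each $\mathcal{G}_{ij,s}(z)_{ab}$ valid on $\{s\le \tau_{\mathrm{stop}}\}$, and then integrate in $s$ using the change of variables $\sigma=\mathrm{Im}w_s$ as in the proofs of Lemmas~\ref{lemma:sstop} and~\ref{lemma:mstop}.

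\textbf{Ingredients.} The deterministic inputs I will use are as follows. By Corollary~\ref{corollary:gfcont} and the definition of $\tau_{\mathrm{stop},2}$, for $s\le \tau_{\mathrm{stop}}$ one has $|G_s(z)_{xy}|^2\lesssim W^{\delta_{\mathrm{stop}}/10}\,W^{-1}|\mathrm{Im}w_s|^{-1/2}$ off-diagonally and $|G_s(z)_{xx}|\lesssim 1$; in particular $|G_s(z)_{vv}-m(z)|\lesssim W^{\delta_{\mathrm{stop}}/20}\,W^{-1/2}|\mathrm{Im}w_s|^{-1/4}$ at every $\Delta$-vertex. By Lemma~\ref{lemma:thetaestimates} together with the obvious analogy $B_s=(I-sm^2 S)^{-1}$ versus $\Theta_s=|m|^2 S(I-s|m|^2 S)^{-1}$, one has the off-diagonal max-entry bounds $\max_{a,b}(\Theta_t)_{ab},\max_{a\neq b}(B_s)_{ab}\lesssim W^{-1}|\mathrm{Im}w_s|^{-1/2}$ and the row-sum bounds $\sum_b(\Theta_t)_{ab},\sum_b(B_s)_{ab}\lesssim |\mathrm{Im}w_s|^{-1}$; combined with $|t-s|\lesssim|\mathrm{Im}w_s|$ (as proved after~\eqref{eq:sstop1}) this yields $\sum_b\{I+(t-s)\Theta_t\}_{ab}\lesssim 1$ at the double edges. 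Finally $\max_{a,b}S^{1/2}_{ab}\lesssim W^{-1}$ and $\sum_b S^{1/2}_{ab}\lesssim 1$.

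\textbf{Graph-by-graph estimate.} For each of the twelve graphs I read off the product of factors and carry out the interior summations one vertex at a time, bounding one adjacent factor by its maximum entry and summing the other via its row-sum bound. The choice is optimized so that as many row sums as possible are taken along $S$- or $S^{1/2}$-edges (which cost no $|\mathrm{Im}w|^{-1}$ power) rather than along $B$- or $\Theta$-edges. The structural reason each $\mathcal{G}_{ij,s}$ satisfies the target is that, relative to the original $\mathcal{G}_{i,s}$, the level-two expansion produces an extra source of small-ness: for $j=0$ the replacement of the two $G$-edges through $u$ by a single $G$-edge and a $B$-edge removes the $G_{uu}\approx m$ diagonal contribution and supplies an extra factor of $W^{-1}|\mathrm{Im}w_s|^{-1/2}$; for $j=1,2$ the introduced $\Delta$-vertex contributes a factor $W^{-1/2}|\mathrm{Im}w_s|^{-1/4}$, which combined with the companion off-diagonal $G$-edge (after a Schwarz on the paired $G$-edges from the loop) gives the same $W^{-1}|\mathrm{Im}w_s|^{-1/2}$ small-ness. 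In every case the pointwise bound reduces, after grouping, to a constant multiple of $W^{-7/4}|\mathrm{Im}w_s|^{-3/2}\cdot|\mathrm{Im}w_s|^{-1}\cdot W^{\mathrm{O}(\delta_{\mathrm{stop}})}$.

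\textbf{Integration and obstacle.} After substituting $\sigma=\mathrm{Im}w_s$ (Jacobian $\asymp 1$) and integrating the leftover $\sigma^{-1}$ over $[\mathrm{Im}w_t,\mathrm{Im}w_0]$, one obtains the bound $W^{-7/4}|\mathrm{Im}w_t|^{-3/2}\log|\mathrm{Im}w_t|^{-1}\cdot W^{\mathrm{O}(\delta_{\mathrm{stop}})}$; since $|\mathrm{Im}w_t|\ge\eta\ge W^{-3/4+\upsilon}$, the logarithm and the $W^{\mathrm{O}(\delta_{\mathrm{stop}})}$ slack are absorbed for $\delta_{\mathrm{stop}}$ small, yielding \eqref{eq:g1-g4;bulk1main}. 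The main obstacle is the disciplined bookkeeping: with twelve graphs one must verify case by case that enough off-diagonal $G$-edges, $\Delta$-factors, or $G\to B$-replacements are present to beat the $W^{-7/4}$ budget, and that each $|\mathrm{Im}w|^{-1}$ introduced by a row-sum is matched by a $|\mathrm{Im}w|$-factor either from a double-edge contraction ($|t-s|\lesssim|\mathrm{Im}w_s|$) or from the final $\sigma$-integration. No further conceptual ingredient beyond Lemma~\ref{lemma:thetaestimates} and the definition of $\tau_{\mathrm{stop}}$ is needed.
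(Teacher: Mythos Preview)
Your proposal has a genuine gap: it never establishes or uses the $\ell^{1}$ row-sum bound on the Green's function,
\[
\sup_{x}\sum_{y}|G_{s}(z)_{xy}|\ \lesssim\ W^{\delta_{\mathrm{stop}}/20}\,W^{1/2+\varepsilon}\,|\mathrm{Im}w_{s}|^{-3/4},
\]
which the paper derives at the very start of the proof (equation \eqref{eq:g1-g4;bulk1}) from the sub-exponential off-diagonal decay of $S^{1/2}\Theta_{s}S^{1/2}$ (Lemma~\ref{lemma:sthetasrange}) together with the hypothesis $N\ll W^{11/8}$. This is the only place in the argument where $N\ll W^{11/8}$ enters, and your proposal does not use that hypothesis at all. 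The bound is indispensable: in $\mathcal{G}_{30,s}$, $\mathcal{G}_{40,s}$, and in the reduced ``purple-line'' diagram that all four $j=0$ cases collapse to, one is left (after exhausting the waved edges) with an index connected to the rest of the graph only through a solid $G$-edge, and the summation over that index must be carried out along the $G$-edge itself. If instead you bound every $G$-edge by the isotropic estimate and perform all summations through $S$, $B_{s}$, and the two double edges, the two double-edge row sums cost $|\mathrm{Im}w_{t}|^{-2}|\mathrm{Im}w_{s}|^{2}$, and the integrated bound comes out as $W^{-2+\varepsilon}|\mathrm{Im}w_{t}|^{-2}$. This exceeds the target $W^{-7/4}|\mathrm{Im}w_{t}|^{-3/2}$ precisely when $|\mathrm{Im}w_{t}|\lesssim W^{-1/2}$, which lies well inside the range $|\mathrm{Im}w_{t}|\geq\eta\gtrsim W^{-3/4+\upsilon}$ you must cover.

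A second, related omission is the case split on the $(y,b)$ double edge. The paper writes $\{\mathrm{Id}+(t-s)\Theta_{t}\}S^{1/2}$ as the sum of its $\mathrm{Id}$ and $(t-s)\Theta_{t}$ parts and treats them differently: on the $\mathrm{Id}$ part the $y$-summation disappears, while on the $(t-s)\Theta_{t}$ part one bounds by the max entry $(t-s)W^{-1}|\mathrm{Im}w_{t}|^{-1/2}$ and then sums $y$ via the $\ell^{1}$ bound on $G$. This two-case mechanism is what converts one of the two $|\mathrm{Im}w_{t}|^{-1}$ factors into $|\mathrm{Im}w_{t}|^{-1/2}$, and it is absent from your outline. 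Two smaller issues: the row sum of $B_{s}$ is $\mathrm{O}(1)$ uniformly in $s$ (Lemma~\ref{lemma:Bestimates}), not $|\mathrm{Im}w_{s}|^{-1}$ as you write; and your claimed pointwise bound $W^{-7/4}|\mathrm{Im}w_{s}|^{-3/2}\cdot|\mathrm{Im}w_{s}|^{-1}$ is not consistent with ``integrating the leftover $\sigma^{-1}$ to get a logarithm,'' since the integrand would be $\sigma^{-5/2}$.
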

\begin{proof}
We start with a few preliminaries. Recall that $T_{s}(z)=\Theta_{s}+\mathcal{E}_{s}(z)$, and recall $\tau_{\mathrm{stop}}$ from \eqref{eq:taubulk}. With this, for any $s\in[0,\tau_{\mathrm{stop}}]$, we have the following, where the last line below is a consequence of the fact that $S^{1/2}$ is an averaging operator:
\begin{align*}
&\sup_{x}\sum_{y}|G_{s}(z)_{xy}|\lesssim W^{\frac{\delta_{\mathrm{stop}}}{20}}\sup_{x}\sum_{y}|(S^{\frac12}T_{s}(z)S^{\frac12})_{xy}|^{\frac12}+W^{\frac{\delta_{\mathrm{stop}}}{20}}\sup_{x}\sum_{y}|S^{\frac12}_{xy}|^{\frac12}\\
&\lesssim W^{\frac{\delta_{\mathrm{stop}}}{20}}\sup_{x}\sum_{y}\left\{\sum_{\gamma,\delta}S^{\frac12}_{x\gamma}(\Theta_{s})_{\gamma\delta}S^{\frac12}_{\delta y}\right\}^{\frac12}+W^{\frac{\delta_{\mathrm{stop}}}{20}}\sup_{x}\sum_{y}\left\{\sum_{\gamma,\delta}S^{\frac12}_{x\gamma}|\mathcal{E}_{s}(z)_{\gamma\delta}|S^{\frac12}_{\delta y}\right\}^{\frac12}+W^{\frac{\delta_{\mathrm{stop}}}{20}}W^{\frac12}\\
&\lesssim W^{\frac{\delta_{\mathrm{stop}}}{20}}\sup_{x}\sum_{y}\left\{\sum_{\gamma,\delta}S^{\frac12}_{x\gamma}(\Theta_{s})_{\gamma\delta}S^{\frac12}_{\delta y}\right\}^{\frac12}+W^{\frac{\delta_{\mathrm{stop}}}{20}}N\max_{\gamma,\delta}|\mathcal{E}_{s}(z)_{\gamma\delta}|^{\frac12}+W^{\frac{\delta_{\mathrm{stop}}}{20}}W^{\frac12}
\end{align*}
We now use {$(S^{1/2}\Theta_{s}S^{1/2})_{xy}\lesssim W^{-1+\delta}|\mathrm{Im}w_{s}|^{-1/2}\exp\{-{K}|\mathrm{Im}w_{s}|^{1/2}W^{-1-\varepsilon}|x-y|_{N}\}+\exp\{-W^{\varepsilon}\}$}, where we recall that $|\cdot|_{N}$ is the periodic distance on $\Z/N\Z$. This holds for any $\varepsilon,\delta,K>0$ fixed (see Lemma \ref{lemma:sthetasrange}). We also use that for any $s\leq\tau_{\mathrm{stop}}$, we have $|\mathcal{E}_{s}(z)_{\gamma\delta}|\lesssim W^{\delta_{\mathrm{stop}}/10}W^{-3/4}|\mathrm{Im}w_{s}|^{-1}W^{-1}|\mathrm{Im}w_{s}|^{-1/2}$. Plugging these into the above implies the following (for a possibly different but still small $\varepsilon>0$):
\begin{align}
\sup_{x}\sum_{y}|G_{s}(z)_{xy}|&\lesssim W^{\frac{\delta_{\mathrm{stop}}}{20}}W^{\frac12+\varepsilon}|\mathrm{Im}w_{s}|^{-\frac34}+W^{\frac{\delta_{\mathrm{stop}}}{10}}NW^{-\frac78}|\mathrm{Im}w_{s}|^{-\frac34}\nonumber\\
&\lesssim W^{\frac{\delta_{\mathrm{stop}}}{20}}W^{\frac12+\varepsilon}|\mathrm{Im}w_{s}|^{-\frac34},\label{eq:g1-g4;bulk1}
\end{align}
where the last bound follows from $N\ll W^{11/8}$. Next, we observe that for any $s\leq\tau_{\mathrm{stop}}$ (see \eqref{eq:taubulk}), we have
\begin{align}
\max_{x,y}|G_{s}(z)_{xy}-\delta_{xy}m(z)|&\lesssim \left\{W^{\frac{\delta_{\mathrm{stop}}}{10}}\max_{x,y}(S^{\frac12}T_{s}(z)S^{\frac12})_{xy}+W^{\frac{\delta_{\mathrm{stop}}}{10}}\max_{x,y}S^{\frac12}_{xy}+W^{-D}\right\}^{\frac12}\nonumber\\
&\lesssim\left\{W^{\frac{\delta_{\mathrm{stop}}}{10}}\max_{x,y}(S^{\frac12}\Theta_{s}S^{\frac12})_{xy}+W^{\frac{\delta_{\mathrm{stop}}}{10}}\max_{x,y}|\mathcal{E}_{s}(z)_{xy}|+W^{\frac{\delta_{\mathrm{stop}}}{10}}W^{-1}\right\}^{\frac12}\nonumber\\
&\lesssim W^{\frac{\delta_{\mathrm{stop}}}{20}}W^{-\frac12}|\mathrm{Im}w_{s}|^{-\frac14},\label{eq:bulkgbound}
\end{align}
where the last line follows by Lemma \ref{lemma:thetaestimates} and a priori bounds for $\mathcal{E}_{s}(z)$ before $\tau_{\mathrm{stop}}$. We now prove \eqref{eq:g1-g4;bulk1main} for $j=0$. The graphs at hand are
\begin{equation}
\hbox{\begin{tikzpicture}
    \tikzstyle{vertex} = [circle, fill=black, inner sep=1.5pt]
    
    \node at (-2, 0) {$\mathcal{G}_{10,s}(z)_{ab} =$};
    
    \node[vertex, label=below:$a$] (a) at (0, 0) {};
    \node[vertex, label=below:$x$] (x) at (2, 0) {};
    \node[vertex, label=below:$u$] (u) at (4, 0) {};
    \node[vertex, label=below:$y$] (y) at (6, 0) {};
    \node[vertex, label=below:$b$] (b) at (8, 0) {};
    \node[vertex, label=above:$\alpha$] (alpha) at (4,2) {};
    \node[vertex, label=above:$\beta$] (beta) at (6,2) {};
    \node[vertex, label=right:$v$] (v) at (4, 1) {};

    \draw[double, thick] (a) -- (x);
    \draw[double, thick] (y) -- (b);                  
    \draw[thick, red, bend right=30] (x) to node[midway, sloped] {\tikz[baseline=-0.5ex]\draw[-{Stealth}] (0,0)--(0.5,0);} (y);
    \draw[thick, blue, bend right=60] (x) to node[midway, sloped] {\tikz[baseline=-0.5ex]\draw[-{Stealth}] (0,0)--(0.5,0);} (y);
    \draw[thick, blue, decorate, decoration={snake, amplitude=1mm, segment length=3mm}] (u) -- (y);
    \draw[thick, blue, decorate, decoration={snake, amplitude=1mm, segment length=3mm}] (v) -- (alpha);
    \draw[thick, decorate, decoration={snake, amplitude=1mm, segment length=3mm}] (u) -- (v);
    \draw[thick, blue] (alpha) -- ++(210:6mm) -- ++(90:6mm) -- cycle; 
    \draw[thick, decorate, decoration={snake, amplitude=1mm, segment length=3mm}] (alpha) -- (beta);
    \draw[thick, blue] (beta) -- ++(30:6mm) -- ++(270:6mm) -- cycle; 
\end{tikzpicture}}\nonumber
\end{equation}

\begin{equation}
\hbox{\begin{tikzpicture}
    \tikzstyle{vertex} = [circle, fill=black, inner sep=1.5pt]
    
    \node at (-2, 0) {$\mathcal{G}_{20,s}(z)_{ab} =$};
    
    \node[vertex, label=below:$a$] (a) at (0, 0) {};
    \node[vertex, label=below:$x$] (x) at (2, 0) {};
    \node[vertex, label=below:$u$] (u) at (4, 0) {};
    \node[vertex, label=below:$y$] (y) at (6, 0) {};
    \node[vertex, label=below:$b$] (b) at (8, 0) {};
    \node[vertex, label=above:$\alpha$] (alpha) at (4,2) {};
    \node[vertex, label=above:$\beta$] (beta) at (6,2) {};
    \node[vertex, label=right:$v$] (v) at (4, 1) {};

    \draw[double, thick] (a) -- (x);
    \draw[double, thick] (y) -- (b);                  
    \draw[thick, blue, decorate, decoration={snake, amplitude=1mm, segment length=3mm}] (u) -- (y);
    \draw[thick, blue, decorate, decoration={snake, amplitude=1mm, segment length=3mm}] (v) -- (alpha);
    \draw[thick, decorate, decoration={snake, amplitude=1mm, segment length=3mm}] (u) -- (v);
    \draw[thick, decorate, decoration={snake, amplitude=1mm, segment length=3mm}] (alpha) -- (beta);
    \draw[thick, red, bend left=30] (x) to node[midway, sloped] {\tikz[baseline=-0.5ex]\draw[-{Stealth}] (0,0)--(0.5,0);} (alpha);
    \draw[thick, blue, bend right=60] (x) to node[midway, sloped] {\tikz[baseline=-0.5ex]\draw[-{Stealth}] (0,0)--(0.5,0);} (y);
    \draw[thick, red, bend left=30] (beta) to node[midway, sloped] {\tikz[baseline=-0.5ex]\draw[-{Stealth}] (0.5,0)--(0,0);} (alpha);
    \draw[thick, red] (beta) to node[midway, sloped] {\tikz[baseline=-0.5ex]\draw[-{Stealth}] (0,0)--(0.5,0);} (y);
\end{tikzpicture}}\nonumber
\end{equation}

\begin{equation}
\hbox{\begin{tikzpicture}
    \tikzstyle{vertex} = [circle, fill=black, inner sep=1.5pt]
    
    \node at (-2, 0) {$\mathcal{G}_{30,s}(z)_{ab} =$};
    
    \node[vertex, label=below:$a$] (a) at (0, 0) {};
    \node[vertex, label=below:$x$] (x) at (2, 0) {};
    \node[vertex, label=below:$u$] (u) at (4, 0) {};
    \node[vertex, label=below:$y$] (y) at (6, 0) {};
    \node[vertex, label=below:$b$] (b) at (8, 0) {};
    \node[vertex, label=above:$\alpha$] (alpha) at (4,2) {};
    \node[vertex, label=above:$\beta$] (beta) at (6,2) {};
    \node[vertex, label=left:$v$] (v) at (4, 1) {};

    \draw[double, thick] (a) -- (x);
    \draw[double, thick] (y) -- (b);                  
    \draw[thick, blue] (alpha) -- (y) node[midway, sloped] {\tikz[baseline=-0.5ex]\draw[-{Stealth}] (0,0)--(0.5,0);} ;
    \draw[thick, red, bend right=30] (x) to node[midway, sloped] {\tikz[baseline=-0.5ex]\draw[-{Stealth}] (0,0)--(0.5,0);} (y);
    \draw[thick, blue, decorate, decoration={snake, amplitude=1mm, segment length=3mm}] (v) -- (alpha);
    \draw[thick, decorate, decoration={snake, amplitude=1mm, segment length=3mm}] (u) -- (v);
    \draw[thick, blue, decorate, decoration={snake, amplitude=1mm, segment length=3mm}] (u) -- (y);
    \draw[thick, decorate, decoration={snake, amplitude=1mm, segment length=3mm}] (alpha) -- (beta);
    \draw[thick, blue, bend left=90] (x) to node[midway, sloped] {\tikz[baseline=-0.5ex]\draw[-{Stealth}] (0,0)--(0.5,0);} (beta);
    \draw[thick, red, bend left=30] (beta) to node[midway, sloped] {\tikz[baseline=-0.5ex]\draw[-{Stealth}] (0.5,0)--(0,0);} (alpha);
\end{tikzpicture}}\nonumber
\end{equation}

\begin{equation}
\hbox{\begin{tikzpicture}
    \tikzstyle{vertex} = [circle, fill=black, inner sep=1.5pt]
    
    \node at (-2, 0) {$\mathcal{G}_{40,s}(z)_{ab} =$};
    
    \node[vertex, label=below:$a$] (a) at (0, 0) {};
    \node[vertex, label=below:$x$] (x) at (2, 0) {};
    \node[vertex, label=below:$u$] (u) at (4, 0) {};
    \node[vertex, label=below:$y$] (y) at (6, 0) {};
    \node[vertex, label=below:$b$] (b) at (8, 0) {};
    \node[vertex, label=above:$\alpha$] (alpha) at (4,2) {};
    \node[vertex, label=above:$\beta$] (beta) at (6,2) {};
    \node[vertex, label=left:$v$] (v) at (4, 1) {};

    \draw[double, thick] (a) -- (x);
    \draw[double, thick] (y) -- (b);      
    \draw[thick, blue, bend left=90] (x) to node[midway, sloped] {\tikz[baseline=-0.5ex]\draw[-{Stealth}] (0,0)--(0.5,0);} (beta);
    \draw[thick, red, bend right=30] (x) to node[midway, sloped] {\tikz[baseline=-0.5ex]\draw[-{Stealth}] (0,0)--(0.5,0);} (y);
    \draw[thick, blue, decorate, decoration={snake, amplitude=1mm, segment length=3mm}] (v) -- (alpha);
    \draw[thick, decorate, decoration={snake, amplitude=1mm, segment length=3mm}] (u) -- (v);
    \draw[thick, decorate, decoration={snake, amplitude=1mm, segment length=3mm}] (alpha) -- (beta);
    \draw[thick, blue, decorate, decoration={snake, amplitude=1mm, segment length=3mm}] (u) -- (beta);
    \draw[thick, blue] (alpha) to node[midway, sloped] {\tikz[baseline=-0.5ex]\draw[-{Stealth}] (0,0)--(0.5,0);} (y);
    \draw[thick, red, bend right=30] (beta) to node[midway, sloped] {\tikz[baseline=-0.5ex]\draw[-{Stealth}] (0.5,0)--(0,0);} (alpha);
\end{tikzpicture}}\nonumber
\end{equation}
In what follows, we will always bound solid lines by \eqref{eq:bulkgbound}; this bound fails when the solid line has matching indices, but this implies that one less solid line has to be summed out, which saves us a factor of \eqref{eq:g1-g4;bulk1}. So, in this exceptional case of matching indices, we lose a saving factor of $W^{-1}|\mathrm{Im}w_{s}|^{-1/2}$, but we gain the better factor $W^{-1/2+\varepsilon}|\mathrm{Im}w_{s}|^{3/4}$; since $\varepsilon>0$ is small, our bounds remain in tact.

Now, for $\mathcal{G}_{10,s}(z)_{ab}$, we bound each {\color{blue}$\Delta$} and the red line by $\mathrm{O}(W^{\delta_{\mathrm{stop}}/20}W^{-1/2}|\mathrm{Im}w_{s}|^{-1/4})$, and then we sum out $\beta,\alpha,v,u$ in that order, with each sum contributing only $\mathrm{O}(1)$ (see Lemma \ref{lemma:Bestimates}). For $\mathcal{G}_{20,s}(z)_{ab}$, we bound all three red lines by $\mathrm{O}(W^{\delta_{\mathrm{stop}}/20}W^{-1/2}|\mathrm{Im}w_{s}|^{-1/4})$, and then we sum out $\beta,\alpha,v,u$ in that order. For $\mathcal{G}_{30,s}(z)_{ab}$, we bound the $\beta{\color{red}\to}\alpha$ line and $\alpha{\color{blue}\to}y$ line by $\mathrm{O}(W^{\delta_{\mathrm{stop}}/20}W^{-1/2}|\mathrm{Im}w_{s}|^{-1/4})$. Then, we bound the $\alpha\leftrightsquigarrow\beta$ arrow by $W^{-1}$. Next, we sum out the $\beta$-index via the $x{\color{blue}\to}\beta$ line, which, according to \eqref{eq:g1-g4;bulk1}, gives a factor of $\mathrm{O}(W^{\delta_{\mathrm{stop}}/20}W^{1/2+\varepsilon}|\mathrm{Im}w_{s}|^{-3/4})$. Finally, we sum out the remaining wavy lines. For $\mathcal{G}_{40,s}(z)_{ab}$, we bound the $\alpha{\color{blue}\to}y$ and $\beta{\color{red}\to}\alpha$ lines by $\mathrm{O}(W^{\delta_{\mathrm{stop}}/20}W^{-1/2}|\mathrm{Im}w_{s}|^{-1/4})$ each as well. Then, we bound the $\alpha\leftrightsquigarrow\beta$ line by $\mathrm{O}(W^{-1})$. Next, we sum out the remaining wavy lines, and then we sum out the $x{\color{blue}\to}\beta$ line to get a factor of $\mathrm{O}(W^{\delta_{\mathrm{stop}}/20}W^{1/2+\varepsilon}|\mathrm{Im}w_{s}|^{-3/4})$ by \eqref{eq:g1-g4;bulk1}. Ultimately, we deduce 
\begin{center}
\begin{tikzpicture}
    \tikzstyle{vertex} = [circle, fill=black, inner sep=1.5pt]
    
    \node at (-4.2, 0) {$\max_{i=1,\ldots,4}|\mathcal{G}_{i0,s}(z)_{ab}|\lesssim W^{\frac{3\delta_{\mathrm{edge}}}{20}}W^{-\frac32+\varepsilon}|\mathrm{Im}w_{s}|^{-\frac54}\times$};
    
    \node[vertex, label=below:$a$] (a) at (0, 0) {};
    \node[vertex, label=below:$x$] (x) at (1, 0) {};
    \node[vertex, label=below:$y$] (y) at (2, 0) {};
    \node[vertex, label=below:$b$] (b) at (3, 0) {};

    \draw[double, thick] (a) -- (x);
    \draw[double, thick] (y) -- (b);                  
    \draw[thick, purple] (x) -- (y) node[midway, sloped] 
    {\tikz[baseline=-0.5ex]\draw[-{Stealth}] (0,0)--(0.5,0);} ; 
\end{tikzpicture}
\end{center}
where the purple line means the absolute value $|G_{s}(z)_{xy}|$. Next, we consider two sub-cases.
\begin{itemize}
\item Split the $(y,b)$-double line into $(t-s)(\Theta_{t})_{yb}$ and $\mathrm{Id}_{yb}$, and consider first the term with $(t-s)(\Theta_{t})_{yb}$. To bound the diagram above, we bound {$(t-s)(\Theta_{t})_{yb}\lesssim (t-s)W^{-1}|\mathrm{Im}w_{t}|^{-1/2}$} (see Lemma \ref{lemma:thetaestimates}). Then, we sum over $y$ using \eqref{eq:g1-g4;bulk1} to get a factor of $\mathrm{O}(W^{\delta_{\mathrm{stop}}/20}W^{1/2+\varepsilon}|\mathrm{Im}w_{s}|^{-3/4})$. Finally, we sum over $x$ using \eqref{eq:sstop1} to get a factor of $\mathrm{O}(|\mathrm{Im}w_{t}|^{-1}|\mathrm{Im}w_{s}|)$. Ultimately, we get a bound on the diagram of $\mathrm{O}((t-s)W^{\delta_{\mathrm{stop}}/20}W^{-1/2}|\mathrm{Im}w_{t}|^{-3/2}|\mathrm{Im}w_{s}|^{1/4})$.
\item Take the term with $\mathrm{Id}_{yb}$. In this case, we bound the $x{\color{blue}\to}y$ line by $\mathrm{O}(W^{\delta_{\mathrm{stop}}/20}W^{-1/2}|\mathrm{Im}w_{s}|^{-1/4})$. Now, we only need to sum over $x$ using \eqref{eq:sstop1} to get a factor of $\mathrm{O}(|\mathrm{Im}w_{t}|^{-1}|\mathrm{Im}w_{s}|)$. Thus, we get a bound on the diagram of $\mathrm{O}(W^{\delta_{\mathrm{stop}}/20}W^{-1/2}|\mathrm{Im}w_{t}|^{-1}|\mathrm{Im}w_{s}|^{3/4})$.
\end{itemize}
Thus, the diagram above is bounded above by
\begin{align}
&\lesssim (t-s)W^{\varepsilon}W^{\frac{\delta_{\mathrm{stop}}}{20}}W^{-\frac12}|\mathrm{Im}w_{t}|^{-\frac32}|\mathrm{Im}w_{s}|^{\frac14}+W^{\frac{\delta_{\mathrm{stop}}}{20}}W^{-\frac12}|\mathrm{Im}w_{t}|^{-1}|\mathrm{Im}w_{s}|^{\frac34}\nonumber\\
&\lesssim W^{\varepsilon}W^{\frac{\delta_{\mathrm{stop}}}{20}}W^{-\frac12}|\mathrm{Im}w_{t}|^{-\frac32}|\mathrm{Im}w_{s}|^{\frac54}+W^{\frac{\delta_{\mathrm{stop}}}{20}}W^{-\frac12}|\mathrm{Im}w_{t}|^{-1}|\mathrm{Im}w_{s}|^{\frac34}. \label{eq:g1-g4;bulk0}
\end{align}
Indeed, recall that $(t-s)\lesssim|\mathrm{Im}w_{s}|$. We now plug the previous two points into our $|\mathcal{G}_{i0,s}(z)_{ab}|$ estimate, integrate over $s\in[0,t]$, and use the change of variables $\sigma=|\mathrm{Im}w_{s}|$ to get
\begin{align}
&\max_{i=1,2,3,4}\int_{0}^{\tau_{\mathrm{stop}}\wedge t}|\mathcal{G}_{i0,s}(z)_{ab}|ds\nonumber\\
&\lesssim W^{\frac{\delta_{\mathrm{stop}}}{5}+2\varepsilon}W^{-2}|\mathrm{Im}w_{t}|^{-\frac32}\int_{|\mathrm{Im}w_{t}|}^{|\mathrm{Im}w_{0}|}d\sigma+W^{\frac{\delta_{\mathrm{stop}}}{5}+2\varepsilon}W^{-2}|\mathrm{Im}w_{t}|^{-1}\int_{|\mathrm{Im}w_{t}|}^{|\mathrm{Im}w_{0}|}\sigma^{-\frac12}d\sigma\nonumber\\
&\lesssim W^{\frac{\delta_{\mathrm{stop}}}{5}+2\varepsilon}W^{-2}|\mathrm{Im}w_{t}|^{-\frac32}.\label{eq:g1-g4;bulk2}
\end{align}
Thus, we are left with $j=1,2$ in \eqref{eq:g1-g4;bulk1main}. We give the details for $k=1$; for $i=2$, the exact same argument works. (Indeed, the only difference between $j=1$ and $j=2$ is the location of a waved edge to a vertex that has attached to it one copy of {\color{blue}$\Delta$}. We will always bound the triangle by its size and sum out the waved edge, which contributes a factor of $\mathrm{O}(1)$ regardless of the location of this waved edge.) In this case, the diagrams at hand are
\begin{equation}
\hbox{\begin{tikzpicture}
    \tikzstyle{vertex} = [circle, fill=black, inner sep=1.5pt]
    
    \node at (-2, 0) {$\mathcal{G}_{11,s}(z)_{ab} =$};
    
    \node[vertex, label=below:$a$] (a) at (0, 0) {};
    \node[vertex, label=below:$x$] (x) at (2, 0) {};
    \node[vertex, label=below:$u$] (u) at (5, 1) {};
    \node[vertex, label=below:$y$] (y) at (6, 0) {};
    \node[vertex, label=below:$b$] (b) at (8, 0) {};
    \node[vertex, label=above:$\alpha$] (alpha) at (4,2) {};
    \node[vertex, label=above:$\beta$] (beta) at (6,2) {};
    \node[vertex, label=below:$\gamma$] (gamma) at (4,0) {};
    \node[vertex, label=above:$\delta$] (delta) at (2,2) {};
    \node[vertex, label=right:$v$] (v) at (4.5, 1.5) {};

    \draw[double, thick] (a) -- (x);
    \draw[double, thick] (y) -- (b);                  
    \draw[thick, blue] (x) -- (gamma) node[midway, sloped] 
    {\tikz[baseline=-0.5ex]\draw[-{Stealth}] (0,0)--(0.5,0);} ; 
    \draw[thick, blue] (gamma) -- (y) node[midway, sloped] {\tikz[baseline=-0.5ex]\draw[-{Stealth}] (0,0)--(0.5,0);} ;
    \draw[thick, red, bend right=30] (x) to node[midway, sloped] {\tikz[baseline=-0.5ex]\draw[-{Stealth}] (0,0)--(0.5,0);} (y);
    \draw[thick, blue, decorate, decoration={snake, amplitude=1mm, segment length=3mm}] (v) -- (alpha);
    \draw[thick, decorate, decoration={snake, amplitude=1mm, segment length=3mm}] (u) -- (v);
    \draw[thick, blue, decorate, decoration={snake, amplitude=1mm, segment length=3mm}] (gamma) -- (u);
    \draw[thick, decorate, decoration={snake, amplitude=1mm, segment length=3mm}] (gamma) -- (delta);
    \draw[thick, blue] (alpha) -- ++(210:6mm) -- ++(90:6mm) -- cycle; 
    \draw[thick, decorate, decoration={snake, amplitude=1mm, segment length=3mm}] (alpha) -- (beta);
    \draw[thick, blue] (beta) -- ++(30:6mm) -- ++(270:6mm) -- cycle; 
    \draw[thick, blue] (delta) -- ++(210:6mm) -- ++(90:6mm) -- cycle; 
\end{tikzpicture}}\nonumber
\end{equation}

\begin{equation}
\hbox{\begin{tikzpicture}
    \tikzstyle{vertex} = [circle, fill=black, inner sep=1.5pt]
    
    \node at (-2, 0) {$\mathcal{G}_{21,s}(z)_{ab} =$};
    
    \node[vertex, label=below:$a$] (a) at (0, 0) {};
    \node[vertex, label=below:$x$] (x) at (2, 0) {};
    \node[vertex, label=below:$u$] (u) at (5, 1) {};
    \node[vertex, label=below:$y$] (y) at (6, 0) {};
    \node[vertex, label=below:$b$] (b) at (8, 0) {};
    \node[vertex, label=above:$\alpha$] (alpha) at (4,2) {};
    \node[vertex, label=above:$\beta$] (beta) at (6,2) {};
    \node[vertex, label=below:$\gamma$] (gamma) at (4,0) {};
    \node[vertex, label=above:$\delta$] (delta) at (3,1) {};
    \node[vertex, label=left:$v$] (v) at (4.5, 1.5) {};

    \draw[double, thick] (a) -- (x);
    \draw[double, thick] (y) -- (b);                  
    \draw[thick, blue] (x) -- (gamma) node[midway, sloped] 
    {\tikz[baseline=-0.5ex]\draw[-{Stealth}] (0,0)--(0.5,0);} ; 
    \draw[thick, blue] (gamma) -- (y) node[midway, sloped] {\tikz[baseline=-0.5ex]\draw[-{Stealth}] (0,0)--(0.5,0);} ;
    \draw[thick, blue, decorate, decoration={snake, amplitude=1mm, segment length=3mm}] (v) -- (alpha);
    \draw[thick, decorate, decoration={snake, amplitude=1mm, segment length=3mm}] (u) -- (v);
    \draw[thick, decorate, decoration={snake, amplitude=1mm, segment length=3mm}] (alpha) -- (beta);
    \draw[thick, blue, decorate, decoration={snake, amplitude=1mm, segment length=3mm}] (gamma) -- (u);
    \draw[thick, decorate, decoration={snake, amplitude=1mm, segment length=3mm}] (gamma) -- (delta);
    \draw[thick, red, bend left=60] (x) to node[midway, sloped] {\tikz[baseline=-0.5ex]\draw[-{Stealth}] (0,0)--(0.5,0);} (alpha);
    \draw[thick, red, bend left=30] (beta) to node[midway, sloped] {\tikz[baseline=-0.5ex]\draw[-{Stealth}] (0.5,0)--(0,0);} (alpha);
    \draw[thick, red] (beta) to node[midway, sloped] {\tikz[baseline=-0.5ex]\draw[-{Stealth}] (0,0)--(0.5,0);} (y);
    \draw[thick, blue] (delta) -- ++(210:6mm) -- ++(90:6mm) -- cycle;
\end{tikzpicture}}\nonumber
\end{equation}

\begin{equation}
\hbox{\begin{tikzpicture}
    \tikzstyle{vertex} = [circle, fill=black, inner sep=1.5pt]
    
    \node at (-2, 0) {$\mathcal{G}_{31,s}(z)_{ab} =$};
    
    \node[vertex, label=below:$a$] (a) at (0, 0) {};
    \node[vertex, label=below:$x$] (x) at (2, 0) {};
    \node[vertex, label=below:$\gamma$] (gamma) at (4, 0) {};
    \node[vertex, label=below:$y$] (y) at (6, 0) {};
    \node[vertex, label=below:$b$] (b) at (8, 0) {};
    \node[vertex, label=above:$\alpha$] (alpha) at (4,2) {};
    \node[vertex, label=above:$\beta$] (beta) at (6,2) {};
    \node[vertex, label=below:$\delta$] (delta) at (5,1) {};
    \node[vertex, label=below:$u$] (u) at (3,1)  {};
    \node[vertex, label=left:$v$] (v) at (3.5, 1.5) {};

    \draw[double, thick] (a) -- (x);
    \draw[double, thick] (y) -- (b);                  
    \draw[thick, blue] (gamma) -- (y) node[midway, sloped] {\tikz[baseline=-0.5ex]\draw[-{Stealth}] (0,0)--(0.5,0);} ;
    \draw[thick, red, bend right=30] (x) to node[midway, sloped] {\tikz[baseline=-0.5ex]\draw[-{Stealth}] (0,0)--(0.5,0);} (y);
    \draw[thick, blue, decorate, decoration={snake, amplitude=1mm, segment length=3mm}] (v) -- (alpha);
    \draw[thick, decorate, decoration={snake, amplitude=1mm, segment length=3mm}] (u) -- (v);
    \draw[thick, blue, decorate, decoration={snake, amplitude=1mm, segment length=3mm}] (u) -- (gamma);
    \draw[thick, blue, decorate, decoration={snake, amplitude=1mm, segment length=3mm}] (gamma) -- (delta);
    \draw[thick, decorate, decoration={snake, amplitude=1mm, segment length=3mm}] (alpha) -- (beta);
    \draw[thick, blue, bend left=90] (x) to node[midway, sloped] {\tikz[baseline=-0.5ex]\draw[-{Stealth}] (0,0)--(0.5,0);} (beta);
    \draw[thick, blue] (alpha) -- (gamma) node[midway, sloped] {\tikz[baseline=-0.5ex]\draw[-{Stealth}] (0,0)--(0.5,0);} ;
    \draw[thick, red, bend left=30] (beta) to node[midway, sloped] {\tikz[baseline=-0.5ex]\draw[-{Stealth}] (0.5,0)--(0,0);} (alpha);
    \draw[thick, blue] (delta) -- ++(30:6mm) -- ++(270:6mm) -- cycle;
\end{tikzpicture}}\nonumber
\end{equation}

\begin{equation}
\hbox{\begin{tikzpicture}
    \tikzstyle{vertex} = [circle, fill=black, inner sep=1.5pt]
    
    \node at (-2, 0) {$\mathcal{G}_{41,s}(z)_{ab} =$};
    
    \node[vertex, label=below:$a$] (a) at (0, 0) {};
    \node[vertex, label=below:$x$] (x) at (2, 0) {};
    \node[vertex, label=below:$\gamma$] (gamma) at (4, 0) {};
    \node[vertex, label=below:$y$] (y) at (6, 0) {};
    \node[vertex, label=below:$b$] (b) at (8, 0) {};
    \node[vertex, label=above:$\alpha$] (alpha) at (4,2) {};
    \node[vertex, label=above:$\beta$] (beta) at (6,2) {};
    \node[vertex, label=above:$\delta$] (delta) at (2,1) {};
    \node[vertex, label=above:$u$] (u) at (2,2) {};
    \node[vertex, label=above:$v$] (v) at (3, 2) {};

    \draw[double, thick] (a) -- (x);
    \draw[double, thick] (y) -- (b);      
    \draw[thick, blue] (x) -- (gamma) node[midway, sloped] {\tikz[baseline=-0.5ex]\draw[-{Stealth}] (0,0)--(0.5,0);} ;            
    \draw[thick, red, bend right=30] (x) to node[midway, sloped] {\tikz[baseline=-0.5ex]\draw[-{Stealth}] (0,0)--(0.5,0);} (y);
    \draw[thick, blue, decorate, decoration={snake, amplitude=1mm, segment length=3mm}] (v) -- (alpha);
    \draw[thick, decorate, decoration={snake, amplitude=1mm, segment length=3mm}] (u) -- (v);
    \draw[thick, decorate, decoration={snake, amplitude=1mm, segment length=3mm}] (alpha) -- (beta);
    \draw[thick, blue] (gamma) to node[midway, sloped] {\tikz[baseline=-0.5ex]\draw[-{Stealth}] (0,0)--(0.5,0);} (beta);
    \draw[thick, blue] (alpha) to node[midway, sloped] {\tikz[baseline=-0.5ex]\draw[-{Stealth}] (0,0)--(0.5,0);} (y);
    \draw[thick, red, bend right=30] (beta) to node[midway, sloped] {\tikz[baseline=-0.5ex]\draw[-{Stealth}] (0.5,0)--(0,0);} (alpha);
    \draw[thick, blue, decorate, decoration={snake, amplitude=1mm, segment length=3mm}] (u) -- (gamma);
    \draw[thick, blue, decorate, decoration={snake, amplitude=1mm, segment length=3mm}] (gamma) -- (delta);
    \draw[thick, blue] (delta) -- ++(210:6mm) -- ++(90:6mm) -- cycle;
\end{tikzpicture}}\nonumber
\end{equation}
For $\mathcal{G}_{11,s}(z)_{ab}$, we bound each {\color{blue}$\Delta$} and red line by $\mathrm{O}(W^{\delta_{\mathrm{stop}}/20}W^{-1/2}|\mathrm{Im}w_{s}|^{-1/4})$, and then we sum out $\beta,\alpha,u,\delta$ in that order. For $\mathcal{G}_{21,s}(z)_{ab}$, we do the same. For $\mathcal{G}_{31,s}(z)_{ab}$, we bound each red line and {\color{blue}$\Delta$} and the $\alpha{\color{blue}\to}\gamma$ line by $\mathrm{O}(W^{\delta_{\mathrm{stop}}/20}W^{-1/2}|\mathrm{Im}w_{s}|^{-1/4})$ each. Then, we sum out $\delta$. For $\mathcal{G}_{41,s}(z)_{ab}$, we bound each {\color{blue}$\Delta$} and red line and the $\gamma{\color{blue}\to}\beta$ line by $\mathrm{O}(W^{\delta_{\mathrm{stop}}/20}W^{-1/2}|\mathrm{Im}w_{s}|^{-1/4})$ each. Then, we sum out $\delta,\beta$. In each case, we have a total of four factors of $\mathrm{O}(W^{\delta_{\mathrm{stop}}/20}W^{-1/2}|\mathrm{Im}w_{s}|^{-1/4})$, and the resulting bound looks like (upon relabeling indices)
\begin{center}
\begin{tikzpicture}
    \tikzstyle{vertex} = [circle, fill=black, inner sep=1.5pt]
    
    \node at (-2, 0) {$W^{\frac{\delta_{\mathrm{stop}}}{5}}W^{-2}|\mathrm{Im}w_{s}|^{-1}\times$};
    
    \node[vertex, label=below:$a$] (a) at (0, 0) {};
    \node[vertex, label=below:$x$] (x) at (2, 0) {};
    \node[vertex, label=below:$u$] (u) at (4, 0) {};
    \node[vertex, label=below:$\alpha$] (alpha) at (6, 0) {};
    \node[vertex, label=below:$y$] (y) at (8, 0) {};
    \node[vertex, label=below:$b$] (b) at (10, 0) {};

    \draw[double, thick] (a) -- (x);
    \draw[double, thick] (y) -- (b);                  
    \draw[thick, blue] (x) -- (u) node[midway, sloped] 
    {\tikz[baseline=-0.5ex]\draw[-{Stealth}] (0,0)--(0.5,0);} ; 
    \draw[thick, blue] (alpha) -- (y) node[midway, sloped] {\tikz[baseline=-0.5ex]\draw[-{Stealth}] (0,0)--(0.5,0);} ;
    \draw[thick, gray, decorate, decoration={snake, amplitude=1mm, segment length=3mm}] (u) -- (alpha);
\end{tikzpicture}
\end{center}
Above, the gray line indicates a matrix whose entries are non-negative and whose row and column sums are $\mathrm{O}(1)$, and each line is interpreted to mean its absolute value. (For example, it can indicate a black wavy line, a blue wavy line, or a composition of such wavy lines.) Now, we note that the diagram above is the same as the diagram we obtained in our bounds for $|\mathcal{G}_{i0,s}(z)_{ab}|$, except we need another factor of \eqref{eq:g1-g4;bulk1} to sum out another solid line. In particular, the diagram above is bounded by \eqref{eq:g1-g4;bulk0} times $W^{1/2+\varepsilon}|\mathrm{Im}w_{s}|^{-3/4}$. Thus,
\begin{align}
\max_{i=1,2,3,4}|\mathcal{G}_{i1,s}(z)_{ab}|&\lesssim W^{2\varepsilon}W^{\frac{\delta_{\mathrm{stop}}}{4}}W^{-2}|\mathrm{Im}w_{t}|^{-\frac32}|\mathrm{Im}w_{s}|^{-\frac12}+W^{\varepsilon}W^{\frac{\delta_{\mathrm{stop}}}{4}}W^{-2}|\mathrm{Im}w_{t}|^{-1}|\mathrm{Im}w_{s}|^{-1}.\label{eq:g1-g4;bulk3}
\end{align}
Integrating the previous bound over $s\in[0,t]$ and making the change of variables $\sigma=|\mathrm{Im}w_{s}|$ proves \eqref{eq:g1-g4;bulk1main} for $j=1$. As mentioned before, the proof for $j=2$ is identical, so the proof is complete.
\end{proof}
Before we present the next estimate, recall $\mathcal{G}_{i3,s}(z)_{ab}$ from Lemma \ref{lemma:level2expand}.
\begin{lemma}\label{lemma:g1-g4;bulk2}
Recall $\tau_{\mathrm{stop}}$ from \eqref{eq:taubulk}, and recall that $N\ll W^{11/8}$. We have the deterministic estimate
\begin{align}
\max_{a,b}\max_{i=1,\ldots,4}\left|\int_{0}^{\tau_{\mathrm{stop}}\wedge t}\mathcal{G}_{i3,s}(z)_{ab}ds\right|\lesssim W^{-\frac34}|\mathrm{Im}w_{t}|^{-1}\cdot W^{-1}|\mathrm{Im}w_{t}|^{-\frac12}.\label{eq:g1-g4;bulk2main}
\end{align}
\end{lemma}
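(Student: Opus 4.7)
The plan is to follow the same strategy as in Lemma \ref{lemma:g1-g4;bulk1}. By Lemma \ref{lemma:level2expand}, $\mathcal{G}_{i3,s}(z)_{ab}$ is a sum of three graphs, one for each remaining $G$-edge in $\mathcal{G}_{i,s}(z)_{ab}$ on which the derivative $\partial_{H_{s,\gamma\delta}}$ can act: the red $\overline{G}_{xy}$-edge and each of the two $\Delta$-triangles at the original vertices $\alpha$ and $\beta$. Each such derivative splits the targeted edge into a pair of $G$-edges anchored at the new vertices $\gamma$ and $\delta$, which are themselves linked to the rest of the graph by a blue wavy edge $B_{u\gamma}$ and a black wavy edge $S_{\gamma\delta}$.

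For each of the resulting twelve graphs (four choices of $i$ times three derivative targets), I first draw the diagram and then apply the bounding toolkit from the proof of Lemma \ref{lemma:g1-g4;bulk1}. Each solid edge $G_s(z)_{\cdot\cdot}$ and each $\Delta$-triangle is bounded pointwise by $\mathrm{O}(W^{\delta_{\mathrm{stop}}/20}W^{-1/2}|\mathrm{Im}w_s|^{-1/4})$ via \eqref{eq:bulkgbound}; each wavy edge (a factor of $S$, $S^{1/2}$, or $B_s$) contributes $\mathrm{O}(1)$ when summed over one of its endpoints, using Lemma \ref{lemma:Bestimates}; and summing a vertex through a solid $G$-edge uses \eqref{eq:g1-g4;bulk1}, yielding $\mathrm{O}(W^{\delta_{\mathrm{stop}}/20}W^{1/2+\varepsilon}|\mathrm{Im}w_s|^{-3/4})$. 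The combinatorial structure of $\mathcal{G}_{i3,s}$ is essentially that of $\mathcal{G}_{i1,s}$ from Lemma \ref{lemma:g1-g4;bulk1}: two new vertices $\gamma, \delta$ and the pair of wavy edges $B_{u\gamma}, S_{\gamma\delta}$, plus roughly the same number of solid edges. In each of the twelve cases, one can choose the summation order so that exactly one extra summation is performed through a solid edge (using \eqref{eq:g1-g4;bulk1}), while all remaining solid edges are bounded pointwise and the remaining vertex sums are carried out via wavy edges at $\mathrm{O}(1)$ cost.

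After this combinatorial bookkeeping, we obtain a pointwise bound of the same form as \eqref{eq:g1-g4;bulk3}, namely
\begin{equation*}
    \max_{i=1,\ldots,4}|\mathcal{G}_{i3,s}(z)_{ab}| \lesssim W^{2\varepsilon + \delta_{\mathrm{stop}}/4}W^{-2}\bigl(|\mathrm{Im}w_t|^{-3/2}|\mathrm{Im}w_s|^{-1/2} + |\mathrm{Im}w_t|^{-1}|\mathrm{Im}w_s|^{-1}\bigr).
\end{equation*}
Integrating over $s \in [0, \tau_{\mathrm{stop}} \wedge t]$ via the change of variables $\sigma = |\mathrm{Im}w_s|$, using $|\mathrm{Im}w_s| \geq |\mathrm{Im}w_t| \geq \eta \gg W^{-3/4}$ exactly as in \eqref{eq:g1-g4;bulk2}, then yields \eqref{eq:g1-g4;bulk2main} for $\varepsilon, \delta_{\mathrm{stop}}$ small enough. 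The main obstacle is the case-by-case verification that each of the twelve graphs admits a summation order realizing the weight count just described; the trickiest cases are those where the derivative splits a $\Delta$-triangle, since then the original $\Delta$-vertex $\alpha$ or $\beta$ becomes attached to two solid $G$-edges, and one must verify that it can still be summed at $\mathrm{O}(1)$ cost via its remaining wavy edges rather than incurring a spurious factor of $W^{1/2+\varepsilon}|\mathrm{Im}w_s|^{-3/4}$.
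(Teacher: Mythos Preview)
Your overall strategy is sound and leads to the correct bound \eqref{eq:g1-g4;bulk3}; the paper's proof follows the same philosophy of reducing each $\mathcal{G}_{i3,s}$ graph to four isotropic factors times a horizontal piece with two solid edges. However, your description of the graph structure is inaccurate: the claim that the three derivative targets are ``the red $\overline{G}_{xy}$-edge and each of the two $\Delta$-triangles'' is correct only for $i=1$. For $i=2$ there are no triangles at all (the three targets are $\overline{G}_{x\alpha}$, $\overline{G}_{\alpha\beta}$, $\overline{G}_{\beta y}$), and for $i=3,4$ the targets are one blue edge, the red $\overline{G}_{xy}$, and the red $\overline{G}_{\alpha\beta}$. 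Consequently, your identification of the ``trickiest case'' as a $\Delta$-splitting is off target; that case is actually benign, since the split vertex still hangs off the wavy tree and is summed at $O(1)$.

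The paper organizes the argument differently. It groups $i=1,2$ together (where the two blue edges through $u$ are $x\to u\to y$, so after the expansion the horizontal $x\to\gamma$, $\delta\to y$ backbone arises directly and one simply bounds the differentiated subgraph $\partial_{H_{s,\delta\gamma}}\mathcal{G}$ by its four isotropic factors) and treats $i=3,4$ separately via a Type~I/Type~II dichotomy. The genuinely delicate point is Type~II for $i=3,4$, where the derivative hits $\overline{G}_{xy}$: the red backbone splits into $x\to\gamma$ and $\delta\to y$, so an \emph{extra} solid-edge summation is needed. The paper observes that the new blue wavy edge $\gamma\leftrightsquigarrow u$ lets one sum the $u$-vertex at $O(1)$ cost instead of via the solid $x\to u$ edge, recovering exactly the factor $W^{1/2+\varepsilon}|\mathrm{Im}w_s|^{-3/4}$ needed to pay for that extra summation, while the freed $x\to u$ edge supplies an isotropic factor compensating for $\mathcal{G}$ having one fewer solid line than $\widetilde{\mathcal{G}}$. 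Your brute-force twelve-case check would also uncover this rerouting, but the paper's abstraction makes the mechanism transparent without drawing every diagram.
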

\begin{proof}
We first consider $i=1,2$. For these indices, we note that $\mathcal{G}_{i,s}(z)_{ab}$ has the following form (which we explain afterwards):
\begin{equation}
\hbox{\begin{tikzpicture}
    \tikzstyle{vertex} = [circle, fill=black, inner sep=1.5pt]
    
    \node[vertex, label=below:$a$] (a) at (0, 0) {};
    \node[vertex, label=below:$x$] (x) at (2, 0) {};
    \node[vertex, label=below:$u$] (u) at (4, 0) {};
    \node[vertex, label=below:$y$] (y) at (6, 0) {};
    \node[vertex, label=below:$b$] (b) at (8, 0) {};
    \node[vertex, label=above:$\mathcal{G}$] (G) at (4,2) {};

    \draw[double, thick] (a) -- (x);
    \draw[double, thick] (y) -- (b);                  
    \draw[thick, blue] (x) -- (u) node[midway, sloped] 
    {\tikz[baseline=-0.5ex]\draw[-{Stealth}] (0,0)--(0.5,0);} ; 
    \draw[thick, blue] (u) -- (y) node[midway, sloped] {\tikz[baseline=-0.5ex]\draw[-{Stealth}] (0,0)--(0.5,0);};
    \draw[thick, blue, decorate, decoration={snake, amplitude=1mm, segment length=3mm}] (u) -- (G);
\end{tikzpicture}}\nonumber
\end{equation}

Here, $\mathcal{G}$ is a graph whose vertices are all connected by wavy lines, and in which the number of solid lines plus the number of triangles is equal to $3$. (We allow $\mathcal{G}$ to ``interact with" the vertices $x,y$; i.e. there can be an edge connecting $\mathcal{G}$ to $x$ and/or $y$.) In particular, for $i=1,2$, the graph $\mathcal{G}_{i3,s}(z)_{ab}$ has the form
\begin{equation}
\hbox{\begin{tikzpicture}
    \tikzstyle{vertex} = [circle, fill=black, inner sep=1.5pt]
    
    \node[vertex, label=below:$a$] (a) at (0, 0) {};
    \node[vertex, label=below:$x$] (x) at (2, 0) {};
    \node[vertex, label=below:$\gamma$] (gamma) at (4, 0) {};
    \node[vertex, label=below:$\delta$] (delta) at (6, 0) {};
    \node[vertex, label=below:$y$] (y) at (8, 0) {};
    \node[vertex, label=below:$y$] (b) at (10, 0) {};
    \node[vertex, label=above:$u$] (u) at (4,2) {};
    \node[vertex, label=above:$\partial_{H_{s,\delta\gamma}}\mathcal{G}$] (G) at (6,2) {};

    \draw[double, thick] (a) -- (x);
    \draw[double, thick] (y) -- (b);                  
    \draw[thick, blue] (x) -- (gamma) node[midway, sloped] 
    {\tikz[baseline=-0.5ex]\draw[-{Stealth}] (0,0)--(0.5,0);} ; 
    \draw[thick, blue] (delta) -- (y) node[midway, sloped] {\tikz[baseline=-0.5ex]\draw[-{Stealth}] (0,0)--(0.5,0);};
    \draw[thick, blue, decorate, decoration={snake, amplitude=1mm, segment length=3mm}] (u) -- (G);
    \draw[thick, decorate, decoration={snake, amplitude=1mm, segment length=3mm}] (gamma) -- (delta);
    \draw[thick, blue, decorate, decoration={snake, amplitude=1mm, segment length=3mm}] (u) -- (gamma);
\end{tikzpicture}}\nonumber
\end{equation}
where $\partial_{H_{s,\delta\gamma}}\mathcal{G}$ means the following. Take the product of all $G_{s}(z)_{\alpha\beta}$ factors appearing in $\mathcal{G}$, and replace this entire product by its derivative with respect to $H_{s,\delta\gamma}$. Now, recall from resolvent perturbation that $\partial_{H_{s,\delta\gamma}}G_{s}(z)_{\alpha\beta}=-G_{s}(z)_{\alpha\delta}G_{s}(z)_{\gamma\beta}$ and $\partial_{H_{s,\delta\gamma}}\overline{G}_{s}(z)_{\alpha\beta}=-\overline{G}_{s}(z)_{\alpha\gamma}\overline{G}_{s}(z)_{\delta\beta}$. Thus, $\partial_{H_{s,\delta\gamma}}\mathcal{G}$ is a finite linear combination of graphs in which the number of solid lines plus the number of triangles is equal to $4$. Thus, we can bound the ${\color{blue}\leftrightsquigarrow}\partial_{H_{s,\delta\gamma}}\mathcal{G}$-part of the graph by $\mathrm{O}(W^{\delta_{\mathrm{stop}}/5}W^{-2}|\mathrm{Im}w_{s}|^{-1})$; we recall that all the vertices in each graph in $\partial_{H_{s,\delta\gamma}}\mathcal{G}$ are connected by wavy lines, so we can bound each solid line and triangle in $\partial_{H_{s,\delta\gamma}}\mathcal{G}$ and sum out the wavy lines. Then, we sum out $u$ to get a factor of $\mathrm{O}(1)$. The remaining horizontal part of the graph was controlled at the end of the proof of Lemma \ref{lemma:g1-g4;bulk1}; see \eqref{eq:g1-g4;bulk3}. In particular, the bound \eqref{eq:g1-g4;bulk3} holds also for $\max_{i=1,2}|\mathcal{G}_{i3,s}(z)_{ab}|$; after integrating it over $s\in[0,t]$ and making the change-of-variables $\sigma=|\mathrm{Im}w_{s}|$ as we did after \eqref{eq:g1-g4;bulk3}, we ultimately deduce the desired bound \eqref{eq:g1-g4;bulk2main} for $i=1,2$.

We are left to control the LHS of \eqref{eq:g1-g4;bulk2main} for $i=3,4$. For these indices, the graph $\mathcal{G}_{i,s}(z)_{ab}$ has the following form (which we explain afterwards):
\begin{equation}
\hbox{\begin{tikzpicture}
    \tikzstyle{vertex} = [circle, fill=black, inner sep=1.5pt]
    
    \node[vertex, label=below:$a$] (a) at (0, 0) {};
    \node[vertex, label=below:$x$] (x) at (2, 0) {};
    \node[vertex, label=above:$u$] (u) at (2, 2) {};
    \node[vertex, label=above:$v$] (v) at (6, 2) {};
    \node[vertex, label=below:$y$] (y) at (6, 0) {};
    \node[vertex, label=below:$b$] (b) at (8, 0) {};
    \node[vertex, label=above:$\mathcal{G}$] (G) at (4,2) {};

    \draw[double, thick] (a) -- (x);
    \draw[double, thick] (y) -- (b);  
    \draw[thick, red] (x) -- (y) node[midway, sloped] 
    {\tikz[baseline=-0.5ex]\draw[-{Stealth}] (0,0)--(0.5,0);} ; 
    \draw[thick, blue] (x) -- (u) node[midway, sloped] 
    {\tikz[baseline=-0.5ex]\draw[-{Stealth}] (0,0)--(0.5,0);} ; 
    \draw[thick, blue] (v) -- (y) node[midway, sloped] {\tikz[baseline=-0.5ex]\draw[-{Stealth}] (0,0)--(0.5,0);};
    \draw[thick, magenta, decorate, decoration={snake, amplitude=1mm, segment length=3mm}] (u) -- (G);
    \draw[thick, magenta, decorate, decoration={snake, amplitude=1mm, segment length=3mm}] (v) -- (G);
\end{tikzpicture}}\nonumber
\end{equation}
Above, the pink wavy lines indicate that the wavy lines can be blue or black; the color will not be important for us. The graph $\mathcal{G}$ is a graph whose vertices are all connected by wavy lines, and the number of solid lines in $u{\color{magenta}\leftrightsquigarrow}\mathcal{G}{\color{magenta}\leftrightsquigarrow}v$ is $2$ (we note that $\mathcal{G}$ is allowed to interact with $u,v$, in that there may be lines connecting $\mathcal{G}$ to $u$ and/or $v$ that are not depicted by the pink wavy lines; it is after counting these possible lines that the number of solid lines in $u{\color{magenta}\leftrightsquigarrow}\mathcal{G}{\color{magenta}\leftrightsquigarrow}v$ is $2$). When we do the regular vertex expansion in Lemma \ref{lemma:expansion}, it either happens at an incoming and outgoing arrow at $u$ or an incoming and outgoing arrow at $v$. By symmetry, it is enough to only handle the case where it happens at $u$. In this case, the integration-by-parts term in the regular vertex expansion in Lemma \ref{lemma:expansion} produces $\mathcal{G}_{i3,s}(z)_{ab}$, which turns out to be a linear combination of two different types of graphs. The first type is the following, in which the derivative hits a solid line in $\mathcal{G}$:
\begin{equation}
\hbox{\begin{tikzpicture}
    \tikzstyle{vertex} = [circle, fill=black, inner sep=1.5pt]

    \node at (-2, 0) {$\mathrm{Type} \ \mathrm{I}:$};
    
    \node[vertex, label=below:$a$] (a) at (0, 0) {};
    \node[vertex, label=below:$x$] (x) at (2, 0) {};
    \node[vertex, label=above:$\gamma$] (u) at (2, 2) {};
    \node[vertex, label=above:$\delta$] (v) at (6, 2) {};
    \node[vertex, label=below:$y$] (y) at (6, 0) {};
    \node[vertex, label=below:$b$] (b) at (8, 0) {};
    \node[vertex, label=above:$\widetilde{\mathcal{G}}$] (G) at (4,2) {};

    \draw[double, thick] (a) -- (x);
    \draw[double, thick] (y) -- (b);  
    \draw[thick, red] (x) -- (y) node[midway, sloped] 
    {\tikz[baseline=-0.5ex]\draw[-{Stealth}] (0,0)--(0.5,0);} ; 
    \draw[thick, blue] (x) -- (u) node[midway, sloped] 
    {\tikz[baseline=-0.5ex]\draw[-{Stealth}] (0,0)--(0.5,0);} ; 
    \draw[thick, blue] (v) -- (y) node[midway, sloped] {\tikz[baseline=-0.5ex]\draw[-{Stealth}] (0,0)--(0.5,0);};
    \draw[thick, magenta, decorate, decoration={snake, amplitude=1mm, segment length=3mm}] (u) -- (G);
    \draw[thick, magenta, decorate, decoration={snake, amplitude=1mm, segment length=3mm}] (v) -- (G);
\end{tikzpicture}}\nonumber
\end{equation}
Above, $u{\color{magenta}\leftrightsquigarrow}\widetilde{\mathcal{G}}{\color{magenta}\leftrightsquigarrow}v$ has $3$ solid lines, since differentiating a solid line in $u{\color{magenta}\leftrightsquigarrow}\mathcal{G}{\color{magenta}\leftrightsquigarrow}v$ has the effect of producing one more solid line. (Note that $u{\color{magenta}\leftrightsquigarrow}\widetilde{\mathcal{G}}{\color{magenta}\leftrightsquigarrow}v$ still has all of its vertices connected by wavy lines.) The second type of graph which appears comes from differentiating the $\overline{G}_{s}(z)_{xy}$ line; it has the form
\begin{equation}
\hbox{\begin{tikzpicture}
    \tikzstyle{vertex} = [circle, fill=black, inner sep=1.5pt]

    \node at (-2, 0) {$\mathrm{Type} \ \mathrm{II}:$};
    
    \node[vertex, label=below:$a$] (a) at (0, 0) {};
    \node[vertex, label=below:$x$] (x) at (2, 0) {};
    \node[vertex, label=above:$u$] (u) at (2, 2) {};
    \node[vertex, label=above:$v$] (v) at (6, 2) {};
    \node[vertex, label=below:$y$] (y) at (6, 0) {};    \node[vertex, label=below:$\gamma$] (gamma) at (3, 0) {};    
    \node[vertex, label=below:$\delta$] (delta) at (5, 0) {};
    \node[vertex, label=below:$b$] (b) at (8, 0) {};
    \node[vertex, label=above:$\mathcal{G}$] (G) at (4,2) {};

    \draw[double, thick] (a) -- (x);
    \draw[double, thick] (y) -- (b);  
    \draw[thick, red] (x) -- (gamma) node[midway, sloped] {\tikz[baseline=-0.5ex]\draw[-{Stealth}] (0,0)--(0.5,0);} ; 
    \draw[thick, red] (delta) -- (y) node[midway, sloped] {\tikz[baseline=-0.5ex]\draw[-{Stealth}] (0,0)--(0.5,0);};
    \draw[thick, blue] (x) -- (u) node[midway, sloped] {\tikz[baseline=-0.5ex]\draw[-{Stealth}] (0,0)--(0.5,0);} ; 
    \draw[thick, blue] (v) -- (y) node[midway, sloped] {\tikz[baseline=-0.5ex]\draw[-{Stealth}] (0,0)--(0.5,0);} ; 
    \draw[thick, magenta, decorate, decoration={snake, amplitude=1mm, segment length=3mm}] (u) -- (G);
    \draw[thick, magenta, decorate, decoration={snake, amplitude=1mm, segment length=3mm}] (v) -- (G);
    \draw[thick, decorate, decoration={snake, amplitude=1mm, segment length=3mm}] (gamma) -- (delta);
    \draw[thick, blue, decorate, decoration={snake, amplitude=1mm, segment length=3mm}] (gamma) -- (u);
\end{tikzpicture}}\nonumber
\end{equation}
For the Type $\mathrm{I}$ graph, we bound the $\delta{\color{blue}\to}y$ line by $\mathrm{O}(W^{\delta_{\mathrm{stop}}/20}W^{-1/2}|\mathrm{Im}w_{s}|^{-1/4})$. Then, we bound the ${\color{magenta}\leftrightsquigarrow}\widetilde{\mathcal{G}}{\color{magenta}\leftrightsquigarrow}\delta$ piece by $\mathrm{O}(W^{3\delta_{\mathrm{stop}}/20}W^{-3/2}|\mathrm{Im}w_{s}|^{-3/4})$ using its three solid lines and the fact that all of its vertices are connected by wavy lines (so we get $\mathrm{O}(1)$ when we sum them out). Finally, we use \eqref{eq:g1-g4;bulk1} to sum out the $u$-vertex and get $\mathrm{O}(W^{\delta_{\mathrm{stop}}/20}W^{1/2+\varepsilon}|\mathrm{Im}w_{s}|^{-3/4})$. We are then left with the horizontal diagram in $\mathrm{Type} \ \mathrm{I}$; we controlled this earlier by \eqref{eq:g1-g4;bulk0}. Ultimately, we deduce that the $\mathrm{Type} \ \mathrm{I}$ graph admits a bound of 
\begin{align}
&\lesssim W^{\varepsilon}W^{\frac{\delta_{\mathrm{stop}}}{4}}W^{-\frac32}|\mathrm{Im}w_{s}|^{-\frac74}\times\eqref{eq:g1-g4;bulk0}\nonumber\\
&= W^{3\varepsilon}W^{\frac{3\delta_{\mathrm{stop}}}{10}}W^{-2}\left\{|\mathrm{Im}w_{t}|^{-\frac32}|\mathrm{Im}w_{s}|^{-\frac12}+|\mathrm{Im}w_{t}|^{-1}|\mathrm{Im}w_{s}|^{-1}\right\}.\label{eq:g1-g4;bulk4}
\end{align}
After integrating this over $s\in[0,t]$ and using the change-of-variables $\sigma=|\mathrm{Im}w_{s}|$, we deduce that the $\mathrm{Type} \ \mathrm{I}$ graph admits a bound of $\lesssim W^{3\delta_{\mathrm{stop}}/10}W^{-2}|\mathrm{Im}w_{t}|^{-3/2}$, which is controlled by the RHS of the desired estimate \eqref{eq:g1-g4;bulk2main}. Thus, it remains to control the $\mathrm{Type} \ \mathrm{II}$ graph. To this end, we note that compared to the $\mathrm{Type} \ \mathrm{I}$ graph, the $\mathrm{Type} \ \mathrm{II}$ graph has an additional red line that we must sum over. This gives a factor of $\mathrm{O}(W^{\delta_{\mathrm{stop}}/20}W^{1/2+\varepsilon}|\mathrm{Im}w_{s}|^{-3/4})$ by \eqref{eq:g1-g4;bulk1}. However, we recover this factor once we sum over $u$ through the $\gamma{\color{blue}\leftrightsquigarrow}u$ line instead of the $x{\color{blue}\to}u$ line. Finally, the extra $x{\color{blue}\to}u$ line makes up for the one-less solid line in $u{\color{magenta}\leftrightsquigarrow}\mathcal{G}{\color{magenta}\leftrightsquigarrow}v$ as compared to $u{\color{magenta}\leftrightsquigarrow}\widetilde{\mathcal{G}}{\color{magenta}\leftrightsquigarrow}v$. In particular, $\mathrm{Type} \ \mathrm{II}$ admits the same bound as $\mathrm{Type} \ \mathrm{I}$.
\end{proof}

Finally, we control the fluctuations arising in Lemma \ref{lemma:level2expand}.
\begin{lemma}\label{lemma:g1-g4;bulk3}
Recall that $N\ll W^{11/8}$. Then, the fluctuation terms coming from the second expansion are bounded by
\begin{align}\label{eq-second-fluct-bulk}
    \max_{a,b} \max_{i\in\llbracket1,4\rrbracket} \left|\int_0^{\tau_{\mathrm{stop}}\wedge t} \mathcal{F}_{i,s}(z)_{ab} ds\right| \prec W^{\frac{3\delta_{\mathrm{stop}}}{10}} W^{-1+2\varepsilon}|\mathrm{Im}w_{t}|^{-1}\cdot W^{-1}|\mathrm{Im}w_{t}|^{-\frac12}.
\end{align}
The fluctuation term coming from the first expansion is bounded by
\begin{align}\label{eq-first-fluct-bulk}
\max_{a,b}\left|\int_0^{\tau_{\mathrm{stop}}\wedge t} \mathcal{F}_{0,s}(z)_{ab} ds\right| \prec W^{\frac{3\delta_{\mathrm{stop}}}{10}} W^{-1+2\varepsilon}|\mathrm{Im}w_{t}|^{-1}\cdot W^{-1}|\mathrm{Im}w_{t}|^{-\frac12}.
\end{align}
\end{lemma}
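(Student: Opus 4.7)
The plan is to control the integrated fluctuations $\int_0^{\tau_{\mathrm{stop}}\wedge t}\mathcal{F}_{i,s}(z)_{ab}\,ds$ by a high-moment method that exploits the mean-zero property enforced by the underline renormalization. The defining Gaussian integration-by-parts identity $\E[H_{s,\alpha\beta}\Phi(H_s)]=sS_{\alpha\beta}\E[\partial_{H_{s,\beta\alpha}}\Phi(H_s)]$ together with the definition of $\underline{\,\cdot\,}$ shows that $\E[\underline{H_{s,\alpha\beta}f(G_s(z))}]=0$ pointwise in $s$, so every summand inside $\mathcal{F}_{i,s}(z)_{ab}$ and $\mathcal{F}_{0,s}(z)_{ab}$ has vanishing mean. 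I will compute a large moment $\E|\mathcal{F}_{i,s}(z)_{ab}|^{2p}$ pointwise in $s\leq\tau_{\mathrm{stop}}$, then pass to an integrated moment bound, and finally invoke Markov together with a union bound over $(a,b)\in\llbracket 1,N\rrbracket^2$ and a polynomially fine time grid, closing up continuity in $s$ via operator-norm bounds on $G_s(z)$ and crude bounds on the entries of $H_s$ (as was done for $\mathcal{E}^{M,\mathrm{stop}}$ in Lemma \ref{lemma:mstop}).

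The first step is to expand $\E|\mathcal{F}_{i,s}(z)_{ab}|^{2p}$ by iterated Gaussian integration by parts, pairing the distinguished $H$-factor from each of the $2p$ copies of $\mathcal{F}$ against either an $H$-factor from another copy or an $H$-factor produced by differentiating a resolvent entry. Because the underline cancels the ``self-contraction'' of a copy of $\mathcal{F}$ with itself, every surviving pairing introduces one extra waved edge (carrying the factor $sS_{\alpha\beta}$) plus two extra solid $G$-edges, arising from $\partial_{H_{s,\beta\alpha}}G_s(z)_{\gamma\delta}=-G_s(z)_{\gamma\beta}G_s(z)_{\alpha\delta}$ and its conjugate. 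Bounding each extra solid edge by \eqref{eq:bulkgbound} and summing the extra waved edge by the row-sum bound \eqref{eq:g1-g4;bulk1} produces, per copy of $\mathcal{F}$, an additional saving of order $W^{-1+\varepsilon}|\mathrm{Im}w_s|^{-1/2}$ compared to the deterministic bound on $\mathcal{G}_{i,s}(z)_{ab}$ obtained in Lemma \ref{lemma:g1-g4;bulk1}. This gain is exactly what is needed to upgrade the bound from $\mathcal{G}_{i,s}(z)_{ab}$-sized to the smaller right-hand side of \eqref{eq-second-fluct-bulk}.

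The second step is to assemble the pointwise moment bounds into an integrated one via Minkowski's inequality:
\[
\biggl(\E\Bigl|\int_0^{\tau_{\mathrm{stop}}\wedge t}\mathcal{F}_{i,s}(z)_{ab}\,ds\Bigr|^{2p}\biggr)^{1/2p} \leq \int_0^{t}\bigl(\E\bigl|\mathcal{F}_{i,s}(z)_{ab}\mathbf{1}_{s\leq\tau_{\mathrm{stop}}}\bigr|^{2p}\bigr)^{1/2p}\,ds.
\]
Plugging in the pointwise moment bound and performing the change of variables $\sigma=|\mathrm{Im}w_s|$ exactly as in the proofs of Lemmas \ref{lemma:g1-g4;bulk1} and \ref{lemma:g1-g4;bulk2} yields the claimed size $W^{3\delta_{\mathrm{stop}}/10}W^{-1+2\varepsilon}|\mathrm{Im}w_t|^{-1}\cdot W^{-1}|\mathrm{Im}w_t|^{-1/2}$. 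Markov with $p$ arbitrarily large and a union bound over $(a,b)$ and over a time grid of mesh $W^{-D}$ then produces the bound at grid points, and crude $s$-Lipschitz estimates upgrade it to the uniform-in-$t$ stochastic-domination bound \eqref{eq-second-fluct-bulk}. The argument for $\mathcal{F}_{0,s}(z)_{ab}$ is structurally identical, the only change being the underlying graph on which the underline acts; the same counting of extra waved and solid edges produces \eqref{eq-first-fluct-bulk}.

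The main obstacle is the combinatorics of the $2p$-fold cumulant expansion: one must verify that after the underline cancellation, \emph{every} surviving diagram carries at least one extra waved edge and two extra solid edges per copy of $\mathcal{F}$, uniformly in the pairing structure, and that when further derivatives land on resolvent entries the extra solid edges they produce only improve the bound. Once this combinatorial statement is in place, the per-diagram analysis is a direct extension of the bookkeeping already established in Lemmas \ref{lemma:g1-g4;bulk1} and \ref{lemma:g1-g4;bulk2}, and the remaining estimates reduce to the same change-of-variables integrals carried out there.
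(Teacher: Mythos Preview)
Your overall strategy for $i\in\llbracket1,4\rrbracket$ is essentially the paper's: take $2p$-th moments, expand by Gaussian integration by parts so that the underline forces each distinguished $H$-factor to contract into a different copy (or into a $G$-edge of a different copy), count edges in the resulting graphs, and bound each diagram with the isotropic estimate \eqref{eq:bulkgbound} and the summation estimate \eqref{eq:g1-g4;bulk1}. The paper organizes the counting through a spanning-forest argument and the parameter $k$ (the number of $H$--$H$ contractions), but the outcome is the pointwise bound $|\mathcal{F}_{i,s}(z)_{ab}|\prec W^{3\delta_{\mathrm{stop}}/10}W^{-2+2\varepsilon}|\mathrm{Im}w_s|^{-1}|\mathrm{Im}w_t|^{-3/2}$, which one then integrates; Minkowski as you wrote it gives the same thing.

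There is, however, a genuine gap in your treatment of $\mathcal{F}_{0,s}$. You assert that ``the argument for $\mathcal{F}_{0,s}(z)_{ab}$ is structurally identical,'' but it is not. The piece \eqref{eq:fluct-piece} underlying $\mathcal{F}_{0,s}$ carries only four solid $G$-edges, whereas each $\mathcal{G}_{i,s}$ (and hence the corresponding $\mathcal{F}_{i,s}$ for $i\geq1$) carries five solid-or-triangle factors. Running the same moment bookkeeping therefore produces $2p$ fewer isotropic factors of $W^{-1/2}|\mathrm{Im}w_s|^{-1/4}$, and the resulting pointwise bound is only
\[
\bigl(W^{\delta_{\mathrm{stop}}/4}W^{-3/2+2\varepsilon}|\mathrm{Im}w_s|^{-3/4}|\mathrm{Im}w_t|^{-3/2}\bigr)^{2p},
\]
which after integration in $s$ falls short of \eqref{eq-first-fluct-bulk} by a factor $W^{1/2}|\mathrm{Im}w_t|^{1/4}$ per copy. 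The paper recovers this missing factor by a second, nontrivial step: in every diagram of the moment expansion, each of the $2p$ pieces retains a ``regular'' vertex $u$ (one incoming and one outgoing $G$-edge of the same color, plus a single black waved edge). One performs an additional regular vertex expansion at each such $u$, and a case analysis of how the new waved edge interacts with the existing waved-edge components shows that each of these $2p$ expansions improves the bound by at least $W^{\delta_{\mathrm{stop}}/20}W^{-1/2}|\mathrm{Im}w_s|^{-1/4}$, exactly closing the gap. Without this extra round of expansions, \eqref{eq-first-fluct-bulk} does not follow from the moment method you describe.

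A minor point: the ``row-sum bound \eqref{eq:g1-g4;bulk1}'' you cite is the bound on $\sum_y|G_{s}(z)_{xy}|$, not on waved edges; waved ($S$, $B_s$) edges are summed via Lemma \ref{lemma:Bestimates} and contribute $\mathrm{O}(1)$. This does not affect your argument for $i\geq1$ but is worth stating correctly.
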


\begin{proof}
In order to bound the fluctuations $\mathcal{F}_{i,s}(z)_{ab}$ for $i\in\llbracket1,4\rrbracket$ in the sense of stochastic domination, we consider their even moments $\mathfrak{M}_{i,s,ab}^{2p}(z)$ with $p\in \Z_+$. We integrate by parts with respect to every $H_s$ appearing in the moments. Then $\mathfrak{M}_{i,s,ab}^{2p}(z)$ decomposes into a sum of several terms, each represented by the expectation of a graph. We introduce the parameter $k$ -- the number of times the integration by parts with respect to one of the $H_s$ hit another $H_s$. It is straightforward to verify that each term in the expansions satisfies the following properties.
\begin{itemize}
    \item The graph has $2p$ double edges connected to $a$ and $2p$ double edges connected to $b$. For convenience, we will treat these edges as not adjacent to each other by treating $a$ and $b$ as $2p$ distinct copies of themselves. We will refer to these vertices as outer vertices and all other vertices -- as inner vertices. Then the $4p$ double edges are connected to $4p$ distinct inner vertices.

    \item All of the vertices that are not incidental to a double edge form $2p - k$ connected components with respect to the waved edges. Each of the components is a tree with respect to the waved edges.

    \item The graph has $12p - 2k$ oriented $G$-edges, all of which split into disjoint loops.

    \item Each vertex is connected by a path to at least one $a$-vertex.
\end{itemize}
Now we consider $s\in[0.\tau_{\mathrm{edge}}]$ and explain how the can bound the size of each term in the expansion of $\mathfrak{M}_{i,s,ab}^{2p}(z)$. The strategy is similar to the proof of the bound on the main terms in Lemma \ref{lemma:g1-g4;bulk1}. Some of the edges will be bounded by the isotropic bound \eqref{eq:bulkgbound}. And the other edges will be summed out in the appropriate order using the bound \eqref{eq:g1-g4;bulk1} and the bound $\sum_y|B_s(z)_{xy}|+\sum_y|S_{xy}| = O(1)$ from Lemma \ref{lemma:Bestimates}. First, since every connected component consisting of waved edges is a tree, we will be able to sum out every waved edge. Then to pick out the remaining summation edges, we collapse each waved connected component into a single vertex and cover the resulting graph with $2p$ disjoint trees rooted at $2p$ $a$-vertices. This selection of the summation edges ensures that we take advantage of the waved edges to the full extent. 

Since there are $2p-k$ connected components of waved edges and $4p$ inner vertices incidental to a double edge, the selected trees have $4p - k$ oriented $G$-edges. Then the number of unselected $G$-edges is $8p-k$. Bounding each of them with an isotropic bound introduces a multiplicative factor $\left(W^{\delta_{\mathrm{edge}}/20}W^{-1/2}|\im w_s|^{-1/8}\right)^{8p-k}$. We bound every double edge incidental to $b$ by a sub-optimal isotropic bound 
\[
|S^{1/2}_{\alpha b}+(t-s)(\Theta_{t}S^{1/2})_{\alpha b}|\lesssim W^{-1}|\im w_t|^{-1/2}|\im w_s|^{1/2}.
\]
Then we sum out all inner vertices not incidental to a double edge one by one, taking one of the remaining leaves of the trees at a time. This introduces a factor $\left(W^{\delta_{\mathrm{stop}}/20}W^{1/2+\varepsilon}|\im w_s|^{-3/4}\right)^{4p - k}$ coming from the summation of the $G$-edges, and a factor $O(1)$ from the summation of the waved edges. Finally, summing out each double edge connected to $a$ introduces a factor $|\im w_t|^{-1}|\im w_s|$. In total, we get
\begin{align*}
\mathfrak{M}_{s,ab}^{2p}(z) &\lesssim \max_{k\in\llbracket0,p\rrbracket}\Big[\left(W^{\delta_{\mathrm{stop}}/20}W^{-1/2}|\im w_s|^{-1/4}\right)^{8p-k}\left(W^{-1}|\im w_t|^{-1/2}|\im w_s|^{1/2}\right)^{2p}\\
&\times\left(W^{\delta_{\mathrm{stop}}/20}W^{1/2+\varepsilon}|\im w_s|^{-3/4}\right)^{4p - k}\left(|\im w_t|^{-1}|\im w_s|\right)^{2p}\Big] \\
&=\max_{k\in\llbracket0,p\rrbracket}\left[\left(W^{6\delta_{\mathrm{stop}}/20}W^{-2+2\varepsilon}|\im w_s|^{-1} |\im w_t|^{-3/2} \right)^{2p} \left(W^{\delta_{\mathrm{stop}}/10+\varepsilon}|\im w_s|^{-1}\right)^{-k}\right]\\
&=\left(W^{3\delta_{\mathrm{stop}}/10}W^{-2+2\varepsilon}|\im w_s|^{-1} |\im w_t|^{-3/2} \right)^{2p}.
\end{align*}
Using Chebyshev inequality, this gives us for $i\in\llbracket1,4\rrbracket$
\begin{equation*}
\left|\mathcal{F}_{i,s}(z)_{ab}\right| \prec W^{3\delta_{\mathrm{edge}}/10+2\varepsilon} W^{-2} |\im w_t|^{-3/2} |\im w_s|^{-1}.
\end{equation*}
By applying a continuity argument similar to Lemma \ref{lemma:mstop}, we see that this bound holds uniformly in $a, b\in\llbracket1,N\rrbracket$ and $s, t\in[0,\tau_{\mathrm{edge}}]$ with $s\le t$. Finally, integrating this bound in $s$ completes the proof of \eqref{eq-second-fluct-bulk}.

Proving \eqref{eq-first-fluct-bulk} requires additional expansions. Again, we consider the $2p$-th moments of the fluctuation $\mathcal{F}_{0,s}(z)_{ab}$ when $s\in[0,\tau_{\mathrm{stop}}]$ -- $\mathfrak{M}_{0,s,ab}^{2p}(z)$ -- and expand it using integration by parts. Similarly to the computation for $\mathfrak{M}_{i,s,ab}^{2p}(z)$ above, we can apply the same summation strategy. The resulting bound on the size of the terms will be worse due to the number of $G$ edges that we bound by the isotropic bound being reduced by $2p$. More precisely, the terms of $\mathfrak{M}_{0,s,ab}^{2p}(z)$ with $k$ integrations by parts hitting $H_s$ are bounded by 
\begin{align*}
\left(W^{5\delta_{\mathrm{stop}}/20}W^{-3/2+2\varepsilon}|\im w_s|^{-3/4} |\im w_t|^{-3/2} \right)^{2p} \left(W^{\delta_{\mathrm{stop}}/10+\varepsilon}|\im w_s|^{-1}\right)^{-k}.
\end{align*}
This bound is insufficient. We show now how this bound can be improved by expanding the terms of $\mathfrak{M}_{0,s,ab}^{2p}(z)$ further. This requires a more precise understanding of the graph structure of the terms. The graph representation of each term consists of $2p$ pieces with the following structure. The baseline piece is given by the graph \eqref{eq:fluct-piece} or by its complex conjugate. 
\begin{equation}
\hbox{\begin{tikzpicture}
    \tikzstyle{vertex} = [circle, fill=black, inner sep=1.5pt]

    \node[vertex, label=below:$a$] (a) at (0, 0) {};
    \node[vertex, label=below:$x$] (x) at (2, 0) {};
    \node[vertex, label=below:$u$] (u) at (4, 0) {};
    \node[vertex, label=below:$y$] (y) at (6, 0) {};
    \node[vertex, label=below:$b$] (b) at (8, 0) {};
    \node[vertex, label=above:$v$] (v) at (4, 2) {};
    \node[vertex, label=above:$\alpha$] (alpha) at (6, 2) {};
    \node[vertex, label=above:$\beta$] (beta) at (8, 2) {};

    \draw[double, thick] (a) -- (x);
    \draw[double, thick] (y) -- (b);                  
    \draw[thick, blue] (x) -- (u) node[midway, sloped] 
    {\tikz[baseline=-0.5ex]\draw[-{Stealth}] (0,0)--(0.5,0);} ; 
    \draw[thick, blue] (u) -- (y) node[midway, sloped] {\tikz[baseline=-0.5ex]\draw[-{Stealth}] (0,0)--(0.5,0);} ;
    \draw[thick, red, bend right=30] (x) to node[midway, sloped] {\tikz[baseline=-0.5ex]\draw[-{Stealth}] (0,0)--(0.5,0);} (y);
    \draw[thick, decorate, decoration={snake, amplitude=1mm, segment length=3mm}] (u) -- (v);
    \draw[thick, decorate, decoration={snake, amplitude=1mm, segment length=3mm}, blue] (v) -- (alpha);
    \draw[thick, decorate, decoration={snake, amplitude=1mm, segment length=3mm}] (alpha) -- (beta);
    \draw[thick, blue, bend left=30] (beta) to node[midway, sloped] {\tikz[baseline=-0.5ex]\draw[-{Stealth}] (0.5,0)--(0,0);} (alpha);
\end{tikzpicture}}\label{eq:fluct-piece}
\end{equation}
The different pieces can be attached by identifying their $(\alpha, \beta)$ waved edges with each other. This corresponds to the case where the integration by parts hits an $H_s$. Each remaining integration by parts modifies the graph by replacing any $(\gamma, \delta)$ $G$-edge by splitting it in half and attaching the two new inner vertices to the $\alpha, \beta$ vertices of another piece. The key property of this operation is that the direction and color of the $G$-edges attached to the $x,y,u$ vertices of each piece is preserved. Then each piece has its $u$ vertex that has one incoming and one outgoing $G$-edge of the same color, one black waved edge corresponding to an $S$-factor and no other edges. We will keep track of this set of vertices $U_{reg}$ satisfying these conditions as we continue to modify the graphs. Initially it contains $2p$ vertices. As long as a vertex is in this set, we can use it to perform a regular vertex expansion from Lemma \ref{lemma:expansion}. We perform $2p$ of the expansions with respect to a vertex $u\in U_{reg}$ consecutively. Initially, every waved-edge connected component is a tree. Each step the graph is modified in one of the following ways. 
\begin{itemize}
    \item The second and third terms of the expansion of Lemma \ref{lemma:expansion} introduce two new vertices $\gamma, \delta$ connected by two waved edges to an existing connected component, one of the  new vertices has a $G$-loop attached to it, $u$ becomes disconnected from $G$-edges and instead $\gamma$ or $\delta$ is connected to the former $G$-neighbors of $u$ by the same edges. Now we apply the previously describes summation procedure to this term. This expansion causes the bound on the term to improve by a factor $W^{\delta_{\mathrm{stop}}/20}W^{-1/2}|\im w_s|^{-1/4}$ due to one additional $G$-edge and no change to the number of $G$-edge summations. Additionally, we exclude the vertex $u$ from $U_{reg}$. Other vertices in $U_{reg}$ maintain their regular property.

    \item The third term of the regular vertex expansion of Lemma \ref{lemma:expansion} introduces two new vertices $\gamma, \delta$ connected by two waved edges to an existing connected component, the two $G$-edges formerly attached to $u$ become attached to $\gamma$ and $\delta$ respectively. Due the differentiation operation $\partial_{H_{\delta\gamma}}$, one of the other edges of the graph gets split into two edges attached to $\gamma$ and $\delta$ respectively. Here, again, there is no change to the number of waved-edge-connected components and there is one additional $G$-edge, thus the bound is improved by a factor $W^{\delta_{\mathrm{stop}}/20}W^{-1/2}|\im w_s|^{-1/4}$. The set $U_{reg}$ maintains all of its vertices other than $u$.

    \item Finally, the first term of the regular vertex expansion of Lemma \ref{lemma:expansion} introduces no new vertices, replaces the two $G$-edges incidental to $u$ by one $G$-edge connecting the $G$-neighbors of $u$. The expansion vertex $u$ instead gets connected to one of its former neighbors by a blue or red waved edge. In case this new waved edge connects two waved-edge-components, the bound gets improved by a factor of $\left(W^{\delta_{\mathrm{stop}}/20}W^{1/2+\varepsilon}|\im w_s|^{-3/4}\right)^{-1}$ due to one fewer $G$-edge summations and no change in the number of $G$s bounded isotropically. In case the new waved edge connects the vertices in the same waved-edge-connected component, we designate the black waved edge incidental to $u$ to be bounded isotropically by $W^{-1}$. The remaining edges of the component form a tree and will be used for summation. This way the bound gets an additional factor of $\left(W^{\delta_{\mathrm{stop}}/20}W^{-1/2}|\im w_s|^{-1/4}\right)^{-1}$ due to one fewer $G$-edges available to bound isotropically and a $W^{-1}$ factor from $S$-edge. Thus the bound is improved by $W^{-\delta_{\mathrm{stop}}/20}W^{-1/2}|\im w_s|^{1/4}$. This procedure also maintains the rest of the set $U_{reg}$ excluding $u$ and the waved-edge-components can be considered a tree as we will continue to use the isotropic bound on the $S$-edge in the following steps.
\end{itemize}

In conclusion, this shows that we are able to carry out $2p$ expansions and the total improvement to the bound is at least $\left(W^{\delta_{\mathrm{stop}}/20}W^{-1/2}|\im w_s|^{-1/4}\right)^{2p}$. Thus, we get
\begin{align*}
\mathfrak{M}_{0,s,ab}^{2p}(z) &\lesssim \left(W^{\delta_{\mathrm{stop}}/20}W^{-3/2+2\varepsilon}|\im w_s|^{-3/4} |\im w_t|^{-3/2} \right)^{2p} \left(W^{\delta_{\mathrm{stop}}/20}W^{-1/2}|\im w_s|^{-1/4}\right)^{2p}\\
&= \left(W^{6\delta_{\mathrm{stop}}/20}W^{-2+2\varepsilon}|\im w_s|^{-1} |\im w_t|^{-3/2} \right)^{2p}.
\end{align*}
Then by Chebyshev inequality,
\begin{equation*}
\left|\mathcal{F}_{0,s}(z)_{ab}\right| \prec W^{6\delta_{\mathrm{stop}}/20}W^{-2+2\varepsilon}|\im w_s|^{-1} |\im w_t|^{-3/2}.
\end{equation*}
Using a union bound, continuity argument and integrating, we conclude the proof.
\end{proof}
\subsection{Proof of Proposition \ref{prop:dstop}}
We combine Lemmas \ref{lemma:expand1}, \ref{lemma:level2expand}, \ref{lemma:g1-g4;bulk1}, \ref{lemma:g1-g4;bulk2}, and \ref{lemma:g1-g4;bulk3}. This shows that the graph in \eqref{eq:d1}, when evaluated at time $s$ and integrated over $s\in[0,\tau_{\mathrm{stop}}\wedge t]$, is $\prec W^{-3/4}|\mathrm{Im}w_{t}|^{-1}\cdot W^{-1}|\mathrm{Im}w_{t}|^{-1/2}$. Its complex conjugate admits the same bound as well. Now, we recall from \eqref{eq:d0} that $[\{\mathrm{Id}+(t-s)\Theta_{t}\}S^{\frac12}\Omega_{s}(z)S^{\frac12}\{\mathrm{Id}+(t-s)\Theta_{t}\}]_{ab}$ is the sum of the graph in \eqref{eq:d1} and its complex conjugate. Therefore, this paragraph implies that %
\begin{align*}
\max_{a,b}\left|\int_{0}^{\tau_{\mathrm{stop}}\wedge t}[\{\mathrm{Id}+(t-s)\Theta_{t}\}S^{\frac12}\Omega_{s}(z)S^{\frac12}\{\mathrm{Id}+(t-s)\Theta_{t}\}]_{ab}ds\right|\prec W^{-\frac34}|\mathrm{Im}w_{t}|^{-1}\cdot W^{-1}|\mathrm{Im}w_{t}|^{-\frac12}.
\end{align*}
The integrand on the LHS of the above display is $\mathrm{O}(W^{C})$ for $C=\mathrm{O}(1)$; this follows by naive polynomial-in-$W$ bounds for all matrix entries appearing on the LHS above. Thus, a standard net argument gives us
\begin{align*}
\sup_{t\in[0,1]}\max_{a,b}\frac{\left|\int_{0}^{\tau_{\mathrm{stop}}\wedge t}[\{\mathrm{Id}+(t-s)\Theta_{t}\}S^{\frac12}\Omega_{s}(z)S^{\frac12}\{\mathrm{Id}+(t-s)\Theta_{t}\}]_{ab}ds\right|}{W^{-\frac34}|\mathrm{Im}w_{t}|^{-1}\cdot W^{-1}|\mathrm{Im}w_{t}|^{-\frac12}}\prec 1.
\end{align*}
The integral on the LHS is the term $\mathcal{E}^{D,\mathrm{stop}}_{t}(z)_{ab}$ that we want to bound in Proposition \ref{prop:dstop}. In particular, the proof of Proposition \ref{prop:dstop} follows immediately from the previous display, so we are done. \qed
%
%
%
\section{Some refinements of the analysis}\label{section:conjecture}
For any $u\in\{1,\ldots,N\}$, we re-introduce the matrices $S^{u}_{xy}=\delta_{xy}S_{ux}$ and $S^{1/2,u}_{xy}=\delta_{xy}S^{1/2}_{ux}$. We claim that the following estimate is true for any $s\leq\tau_{\mathrm{stop}}$ (see \eqref{eq:taubulk}):
\begin{align}
\|G_{s}(z)S^{u}G_{s}(z)^{\ast}\|_{\mathrm{op}}+|\mathrm{Im}w_{s}|^{-1}\|\sqrt{S^{\frac12,u}}\mathrm{Im}G_{s}(z)\sqrt{S^{\frac12,u}}\|_{\mathrm{op}}\lesssim W^{-\frac12}|\mathrm{Im}w_{s}|^{-\frac54}.\label{eq:conj}
\end{align}
We briefly explain where this comes from, assuming that $T_{s}(z)\approx\Theta_{s}$ at the level of entries before time $\tau_{\mathrm{stop}}$. (This approximation will be justified in the following.) We use Ward and a trace bound for the operator norm in the following fashion:
\begin{align*}
\|G_{s}(z)S^{u}G_{s}(z)^{\ast}\|_{\mathrm{op}}&\lesssim|\mathrm{Im}w_{s}|^{-1}\|\sqrt{S^{u}}\mathrm{Im}G_{s}(z)\sqrt{S^{u}}\|_{\mathrm{op}}\lesssim|\mathrm{Im}w_{s}|^{-1}|\left\{\mathrm{Tr}S^{u}\mathrm{Im}G_{s}(z)S^{u}\mathrm{Im}G_{s}(z)^{\ast}\right\}^{\frac12}\\
&\lesssim|\mathrm{Im}w_{s}|^{-1}\left\{\sum_{a,x,y,v} S^{u}_{ax}\mathrm{Im}G_{s}(z)_{xy}S^{u}_{yv}\mathrm{Im}G_{z}(s)^{\ast}_{va}\right\}^{\frac12}\\
&\lesssim|\mathrm{Im}w_{s}|^{-1}\left\{\sum_{a,y} S_{ua}|G_{s}(z)_{ay}|^{2}S_{yu}\right\}^{\frac12}\\
&\lesssim |\mathrm{Im}w_{s}|^{-1}\max_{a,b}\{T_{s}(z)_{ab}\}^{\frac12}\lesssim W^{-\frac12}|\mathrm{Im}w_{s}|^{-\frac54}.
\end{align*}
where the last line above follows because $S^{1/2}$ is an averaging operator, and before time $\tau_{\mathrm{stop}}$, the entries of $T_{s}(z)$ are $\mathrm{O}(W^{-1}|\mathrm{Im}w_{s}|^{-1/2})$ (since they are controlled by entries of $\Theta_{s}$ before time $\tau_{\mathrm{stop}}$; now use Lemma \ref{lemma:thetaestimates} to control the entries of $\Theta_{s}$). This controls the first term on the LHS of \eqref{eq:conj}. The second term is treated the same, since $S$ and its square root have the same bounds by assumption. We also conjecture that 
\begin{align}
\|G_{s}(z)S^{u}G_{s}(z)^{\ast}\|_{\mathrm{op}}+|\mathrm{Im}w_{s}|^{-1}\|\sqrt{S^{\frac12,u}}\mathrm{Im}G_{s}(z)\sqrt{S^{\frac12,u}}\|_{\mathrm{op}}\lesssim W^{-1}|\mathrm{Im}w_{s}|^{-\frac32}.\label{eq:conj2}
\end{align}
The relevance of these estimates are from the proof of Lemma \ref{lemma:mstop}. More precisely, the previous estimate is an improvement over the bounds \eqref{eq:mstop3a} and \eqref{eq:mstop7c} by a factor of $|\mathrm{Im}w_{s}|^{-1/2}$. (We also note that the two operator norms, except for the difference in $S$ versus $S^{1/2}$, are equivalent by  Ward, i.e. $GG^{\ast}=|\mathrm{Im}w_{s}|^{-1}\mathrm{Im}G$.) The reason why we believe that \eqref{eq:conj2} is true is because our estimates \eqref{eq:mstop3a} and \eqref{eq:mstop7c} for the LHS of \eqref{eq:conj} did not use the fact that $S^{u}$ and $S^{1/2,u}$ restrict to a block of $G_{s}(z)$ of size $\mathrm{O}(W)\times\mathrm{O}(W)$. Since we essentially extend this block to the entire support of $G_{s}(z)$, which has size $\mathrm{O}(W|\mathrm{Im}w_{s}|^{-1/2})\times\mathrm{O}(W|\mathrm{Im}w_{s}|^{-1/2}$), we are morally losing a factor of $|\mathrm{Im}w_{s}|^{-1/2}$. (Note that $G_{s}(z)$ has the same support length as $T_{s}(z)$, and that $T_{s}(z)\approx\Theta_{s}$; now use Lemma \ref{lemma:sthetasrange} to get the claimed estimate on the support length.) 
%
%
%
\appendix
\section{Auxiliary diffusion estimates}\label{section:aux}
We collect below estimates for the diffusion profile matrix $\Theta_{t}=|m(z)|^{2}S(1-t|m(z)|^{2}S)^{-1}$ for $t\in[0,1]$ and $z$ in the bulk, i.e. $|E|<2$ fixed. First, we recall $w_{t}=-m(z)^{-1}-tm(z)$.
\begin{lemma}\label{lemma:thetaestimates}
Suppose that $|E|<2$ is fixed. If $\eta\gtrsim W^{2}/N^{2}$, then 
\begin{align*}
(\Theta_{t})_{ab}+|(\Theta_{t}S^{\frac12})_{ab}|+|(S^{\frac12}\Theta_{t}S^{\frac12})_{ab}|&\lesssim W^{-1}|\mathrm{Im}w_{t}|^{-\frac12},\\
\max_{a}\sum_{b}|(\Theta_{t})_{ab}|+\max_{a}\sum_{b}|(\Theta_{t}S^{\frac12})_{ab}|&\lesssim|\mathrm{Im}w_{t}|^{-1}.
\end{align*}
\end{lemma}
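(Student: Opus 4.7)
The plan is to reduce all six estimates to a single entry-wise bound on $\Theta_t$ and a row-sum identity, both obtained from the Neumann expansion
\[
\Theta_t = \sum_{n\geq 1}(t|m(z)|^2)^{n-1}|m(z)|^2 S^n,
\]
which converges since $t|m(z)|^2 < 1$ in the bulk. A preliminary scalar identity that will be used throughout is $1 - t|m(z)|^2 \asymp |\mathrm{Im}\, w_t|$. Taking imaginary parts of the self-consistent equation $z = -m(z)^{-1} - m(z)$ gives $\eta = \mathrm{Im}(m)(1-|m|^2)/|m|^2$, and combining this with $\mathrm{Im}\, w_t = \eta + (1-t)\mathrm{Im}(m)$ yields $1 - t|m|^2 = |m|^2 \mathrm{Im}\, w_t / \mathrm{Im}(m)$, which is $\asymp \mathrm{Im}\, w_t$ since $|m| \asymp \mathrm{Im}(m) \asymp 1$ in the bulk (Lemma 6.2 in \cite{EY}).

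Next I would handle the row sums and reduce the entry bounds for $\Theta_t S^{1/2}$ and $S^{1/2}\Theta_t S^{1/2}$ to the entry bound for $\Theta_t$ itself. Each $S^n$ has non-negative entries, so $\Theta_t$ does too, and hence $|(\Theta_t)_{ab}| = (\Theta_t)_{ab}$. Because the all-ones vector is a $1$-eigenvector of the doubly stochastic matrix $S$, evaluating the Neumann series on it gives $\sum_b (\Theta_t)_{ab} = |m|^2/(1-t|m|^2) \asymp |\mathrm{Im}\, w_t|^{-1}$. Since $S^{1/2}$ is itself doubly stochastic with non-negative entries (by the standing assumption on $S$), the row sums of $\Theta_t S^{1/2}$ agree with those of $\Theta_t$, and the maximum entry of $\Theta_t S^{1/2}$ or $S^{1/2}\Theta_t S^{1/2}$ is bounded by $\max_{a,b}(\Theta_t)_{ab}$ because averaging against a doubly stochastic matrix cannot increase the supremum of entries.

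The main step is then the entry bound $(\Theta_t)_{ab} \lesssim W^{-1}|\mathrm{Im}\, w_t|^{-1/2}$. I would combine the Neumann expansion with the heat-kernel estimate
\[
\sup_{x,y}(S^n)_{xy} \lesssim \frac{1}{W\sqrt{n}} + \frac{1}{N}, \qquad n \geq 1,
\]
which holds for transition matrices with step distribution of variance $\asymp W^2$ on $\mathbb{T}_N$ (standard Nash-inequality or Fourier argument). Using $(t|m|^2)^{n-1} \lesssim e^{-cn|\mathrm{Im}\, w_t|}$ from the first step,
\[
(\Theta_t)_{ab} \lesssim \sum_{n\geq 1} e^{-cn|\mathrm{Im}\, w_t|}\left(\frac{1}{W\sqrt n} + \frac{1}{N}\right) \lesssim \frac{1}{W\sqrt{|\mathrm{Im}\, w_t|}} + \frac{1}{N|\mathrm{Im}\, w_t|},
\]
and the assumption $|\mathrm{Im}\, w_t| \geq \eta \gtrsim W^2/N^2$ absorbs the second term into the first.

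The main obstacle is supplying the heat-kernel estimate cleanly, since that is the only genuinely non-trivial input. An alternative that bypasses random-walk machinery is to diagonalize the circulant matrix $S$ via the discrete Fourier transform on $\mathbb{T}_N$, use the spectral bound $1 - \hat S(k) \gtrsim \min(1, (W|k|_N/N)^2)$ (which follows from the Taylor expansion of $\hat S$ at $k = 0$ together with $\sum_z z^2 S_{0z} \asymp W^2$), and split the Fourier sum at $|k|_N \sim N/W$: the low-frequency part gives $\asymp W^{-1}|\mathrm{Im}\, w_t|^{-1/2}$ via a Riemann-sum approximation of $\int d\xi/(|\mathrm{Im}\, w_t| + \xi^2)$, while the high-frequency tail is controlled by the decay of $\hat S(k)$ and contributes at most $O(W^{-1})$, leading to the same conclusion.
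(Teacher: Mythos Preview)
Your proof is correct and follows essentially the same route as the paper: both reduce everything to an entry bound on $\Theta_t$ via a resolvent expansion (you use the discrete Neumann series $\sum_{n\ge1}(t|m|^2)^{n-1}|m|^2 S^n$, the paper uses the continuous Laplace-transform formula $|m|^2\int_0^\infty e^{-(1-t|m|^2)r}Se^{rt|m|^2(S-\mathrm{Id})}\,dr$), feed in a heat-kernel bound of the shape $W^{-1}(\text{time})^{-1/2}+N^{-1}$, absorb the $N^{-1}$ term using $\eta\gtrsim W^2/N^2$, and handle the row sums and the $S^{1/2}$-factors via the identical averaging/stochasticity observation; the scalar identity $1-t|m|^2\asymp\mathrm{Im}\,w_t$ is also derived the same way. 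Your Fourier/circulant alternative for the heat-kernel input is a legitimate and arguably cleaner substitute for the paper's periodization-plus-subexponential-tail argument.
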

\begin{proof}
We start with the pointwise estimates. Note that it suffices to control only $(\Theta_{t})_{ab}$, since $S^{1/2}$ is an averaging operator, i.e. it has non-negative entries and row and column sums equal to $1$. We use the formula $(\zeta-A)^{-1}=\int_{0}^{\infty}\mathrm{e}^{-r\zeta}\mathrm{e}^{rA}dr$ for any $\zeta>0$ and $A\geq0$; this gives

\begin{align}
\Theta_{t}&=\frac{|m(z)|^{2}S}{1-t|m(z)|^{2}-t|m(z)|^{2}(S-\mathrm{Id})}\nonumber\\
&=|m(z)|^{2}\int_{0}^{\infty}\mathrm{e}^{-(1-t|m(z)|^{2})r}S\mathrm{e}^{rt|m(z)|^{2}(S-\mathrm{Id})}dr.\label{eq:thetaformula}
\end{align}
We first consider the case where $t\in[0,1/2]$. In this case, we know that $1-t|m(z)|^{2}\geq1-\frac12|m(z)|^{2}\asymp1$ and $|m(z)|^{2}\asymp1$ (see Lemma 6.2 in \cite{EY}, which gives $|\mathrm{Im}m(z)|\asymp1$ in the bulk). Now, note that $\exp\{rt|m(z)|^{2}(S-\mathrm{Id})\}$ is the semigroup for a random walk with transition matrix $S$. Thus, its row and column sums are equal to $1$, so that $|(S\mathrm{e}^{rt|m(z)|^{2}(S-\mathrm{Id})})_{ab}|\lesssim\max_{\beta}S_{a\beta}\lesssim W^{-1}$. In particular, we have 
\begin{align*}
\max_{a,b}|(\Theta_{t})_{ab}|&\lesssim W^{-1}|m(z)|^{2}\int_{0}^{\infty}\mathrm{e}^{-(1-t|m(z)|^{2})r}dr\lesssim W^{-1}\frac{|m(z)|^{2}}{1-t|m(z)|^{2}}\lesssim W^{-1}.
\end{align*}
This proves the desired pointwise estimate for $t\in[0,1/2]$, since the far RHS is $\lesssim W^{-1}|\mathrm{Im}w_{t}|^{-\alpha}$ for any $\alpha\geq0$ (recall that $\mathrm{Im}w_{t}=\eta+(1-t)\mathrm{Im}m(z)=\mathrm{O}(1)$). We now focus on $t\in[1/2,1]$. We start with the same formula for $\Theta_{t}$. However, for this, we will use the following random walk heat kernel bound:
\begin{align}
|\exp\{rt|m(z)|^{2}(S-\mathrm{Id})\}_{ab}|\lesssim W^{-1}r^{-\frac12}t^{-\frac12}+N^{-1}.\label{eq:hkbound}
\end{align}
(Intuitively, the LHS is the heat kernel at time $\asymp rt$ for a random walk on $\mathbb{T}_{N}=\Z/N\Z$ whose step distribution has variance proportional to $W^{2}$; this explains the familiar $W^{-1}t^{-1/2}r^{-1/2}$ factor on the RHS. The term $N^{-1}$ comes from the fact that the law of the random walk converges to the uniform measure on $\Z/N\Z$ in the long-time limit. The estimate itself is standard, but we could not find a reference, so we give a brief but complete argument for it later.) Assuming that \eqref{eq:hkbound} is true, then since $t^{-1/2}\lesssim1$ for $t\in[1/2,1]$, we get
\begin{align*}
\max_{a,b}|(\Theta_{t})_{ab}|&\lesssim \int_{0}^{\infty}\mathrm{e}^{-(1-t|m(z)|^{2})r}\{W^{-1}r^{-\frac12}+N^{-1}\}dr\\
&\lesssim W^{-1}|1-t|m(z)|^{2}|^{-\frac12}+N^{-1}|1-t|m(z)|^{2}|^{-1},
\end{align*}
where the last bound follows by change-of-variables $(1-t|m(z)|^{2})r\mapsto r$ and $\int_{0}^{\infty}\mathrm{e}^{-r}r^{-1/2}dr\lesssim1$. Moreover, the second term in the last line is controlled by the first. Indeed, we have $1-t|m(z)|^{2}\geq1-|m(z)|^{2}\gtrsim\eta$ by Lemma 3.5 in \cite{EKYY}, and $N^{-1}\eta^{-1/2}\lesssim W^{-1}$ by assumption. Now, write $1-t|m(z)|^{2}=1-|m(z)|^{2}+(1-t)|m(z)|^{2}$ and $\mathrm{Im}w_{t}=\eta+(1-t)\mathrm{Im}m(z)$. Combining these two gives
\begin{align*}
1-t|m(z)|^{2}&=1-|m(z)|^{2}+|m(z)|^{2}\frac{(1-t)\mathrm{Im}m(z)}{\mathrm{Im}m(z)}\\
&=1-|m(z)|^{2}+|m(z)|^{2}\frac{\mathrm{Im}w_{t}}{\mathrm{Im}m(z)}-\frac{|m(z)|^{2}\eta}{\mathrm{Im}m(z)}=|m(z)|^{2}\frac{\mathrm{Im}w_{t}}{\mathrm{Im}m(z)},
\end{align*}
where the last identity follows by (3.3) in \cite{EKYY}. Now, we note that $|m(z)|^{2},|\mathrm{Im}m(z)|\asymp1$ (see Lemma 3.5 in \cite{EKYY}), so that $1-t|m(z)|^{2}\asymp\mathrm{Im}w_{t}$. In particular, the previous two displays yield the desired pointwise estimate on $|(\Theta_{t})_{ab}|$. Finally, we have 
\begin{align*}
\sum_{b}|(\Theta_{t})_{ab}|&\lesssim|m(z)|^{2}\int_{0}^{\infty}\mathrm{e}^{-(1-t|m(z)|^{2})r}\sum_{b}(S\mathrm{e}^{rt|m(z)|^{2}(S-\mathrm{Id})})_{ab}dr\\
&\lesssim|m(z)|^{2}\int_{0}^{\infty}\mathrm{e}^{-(1-t|m(z)|^{2})r}dr\lesssim\frac{|m(z)|^{2}}{1-t|m(z)|^{2}},
\end{align*}
since the row and column sums of both $S$ and $\exp\{rt|m(z)|^{2}(S-\mathrm{Id})\}$ are equal to $1$. We showed earlier that $1-t|m(z)|^{2}\gtrsim|\mathrm{Im}w_{t}|$ for $|E|<2$. Since $|m(z)|^{2}\lesssim1$, the desired summed estimate for $(\Theta_{t})_{ab}$ (over the $b$ index) follows. The summed estimate for $\Theta_{t}S^{1/2}$ follows as well since $S^{1/2}$ is an averaging operator (its entries are non-negative, and its row and column sums are equal to $1$). 

Ultimately, it suffices to now show the heat kernel bound \eqref{eq:hkbound}. First, note that $\exp\{rt|m(z)|^{2}(S-\mathrm{Id})\}$ is the transition operator at time $rt|m(z)|^{2}$ for a random walk on the torus $\mathbb{T}_{N}\simeq\{1,\ldots,N\}$ whose step distribution has jump-range $\lesssim W$ and variance $\asymp W^{2}$. Next, we let $\mathcal{L}$ be the generator for the same random walk but on $\Z$ (i.e. without periodic boundary conditions), and let $\exp\{rt|m(z)|^{2}\mathcal{L}\}$ be its transition operator at time $rt|m(z)|^{2}$. Because the underlying random walks are space-time homogeneous, we have 
\begin{align*}
\exp\{rt|m(z)|^{2}(S-\mathrm{Id})\}_{ab}=\sum_{k\in\Z}\exp\{rt|m(z)|^{2}\mathcal{L}\}_{a,b+kN}.
\end{align*}
(Indeed, the random walk on the torus is the same as the random walk on the line and then projecting $\Z\to\Z/N\Z\simeq\mathbb{T}_{N}$.) We now claim that for any ${K}>0$, we have the following sub-exponential tail estimate for a random walk heat kernel whose step distribution has variance proportional to $W^{2}$:
\begin{align*}
\exp\{rt|m(z)|^{2}\mathcal{L}\}_{a,b+kN}\lesssim_{{K}} W^{-1}r^{-\frac12}t^{-\frac12}|m(z)|^{-1}\mathrm{e}^{-{K}[1\wedge(W^{2}rt|m(z)|^{2})^{-1/2}]|a-b-kN|}.
\end{align*}
This follows by (A.12) in \cite{DT16}. (Technically, (A.12) in \cite{DT16} makes the assumption that $W\asymp1$; however, an immediate inspection of its proof, namely (A.20) in \cite{DT16}, implies that for general $W$, the variance of the underlying random walk hits each factor of $t$ in the bound (A.12) in \cite{DT16}. Since the variance of our random walk is $\asymp W^{2}$, this is where the $W^{-1}$ and $W^{2}$ factors in the previous display come from. We do not reproduce the elementary calculation verbatim.) Now, we note that $|m(z)|\asymp1$ as well as the geometric series bound
\begin{align*}
\sum_{k\in\Z}\mathrm{e}^{-{K}[1\wedge(W^{2}rt|m(z)|^{2})^{-1/2}]|a-b-kN|}\lesssim\left\{1-\mathrm{e}^{-{K}[1\wedge(W^{2}rt|m(z)|^{2})^{-1/2}]N}\right\}^{-1}.
\end{align*}
By combining the previous three displays, we have 
\begin{align*}
\exp\{rt|m(z)|^{2}(S-\mathrm{Id})\}_{ab}&\lesssim_{{K}}W^{-1}r^{-\frac12}t^{-\frac12}|m(z)|^{-1}\left\{1-\mathrm{e}^{-{K}[1\wedge(W^{2}rt|m(z)|^{2})^{-1/2}]N}\right\}^{-1}.
\end{align*}
If the exponent $[1\wedge(W^{2}rt|m(z)|^{2})^{-1/2}]N$ is smaller than some universal constant $\varepsilon>0$, then the last factor on the RHS of the previous display is $\lesssim Wr^{1/2}t^{1/2}|m(z)|N^{-1}$. (Indeed, this follows by $1-\mathrm{e}^{-\gamma}\gtrsim\gamma$ for $\gamma$ small enough.) At this point, we deduce \eqref{eq:hkbound}. If the exponent $[1\wedge(W^{2}rt|m(z)|^{2})^{-1/2}]N$ is larger than $\varepsilon>0$ (and thus uniformly bounded away from $0$), then the last factor on the RHS of the previous display is $\mathrm{O}(1)$; at this point, we also get \eqref{eq:hkbound} (since $|m(z)|\asymp1$ as noted earlier in this proof).
\end{proof}
We now give a purely technical estimate giving off-diagonal behavior of $S^{1/2}\Theta_{t}S^{1/2}$. It is sub-optimal by $W^{\varepsilon}$ factors, but this is enough for our purposes.
\begin{lemma}\label{lemma:sthetasrange}
Assume that $\eta\gtrsim W^{2}/N^{2}$ and that $|E|<2$ is fixed. Assume also $\eta\gg W^{C}$ for some $C=\mathrm{O}(1)$, and that $W\gg1$. For any $\varepsilon,{K}>0$, there exists $\delta>0$ such that for any indices $a,b$, we have
\begin{align*}
(\Theta_{t})_{ab}+(S^{\frac12}\Theta_{t}S^{\frac12})_{ab}&\lesssim_{{K}} W^{-1+\varepsilon}|\mathrm{Im}w_{t}|^{-\frac12}\exp\left\{-\frac{{K}|\mathrm{Im}w_{t}|^{1/2}|a-b|_{N}}{W^{1+\varepsilon}}\right\}+\mathrm{O}(\mathrm{e}^{-W^{\delta}}).
\end{align*}
\end{lemma}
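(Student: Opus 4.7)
The proof extracts off-diagonal decay from the same integral representation used in the proof of Lemma \ref{lemma:thetaestimates}, namely
\[
\Theta_t = |m(z)|^2 \int_0^\infty e^{-(1-t|m(z)|^2)r}\, S\, \exp\{rt|m(z)|^2(S-I)\}\, dr,
\]
combined with the image-point decomposition $\exp\{\tau(S-I)\}_{ab} = \sum_{k\in\Z}\exp\{\tau\mathcal{L}\}_{a,b+kN}$ and the $\Z$-heat-kernel bound
\[
\exp\{rt|m|^2\mathcal{L}\}_{a,b+kN} \lesssim_{K_0} W^{-1}(rt)^{-1/2}|m|^{-1}\exp\{-K_0[1\wedge(W^2rt|m|^2)^{-1/2}]|a-b-kN|\}
\]
quoted from (A.12) of \cite{DT16}, where $K_0>0$ is a free parameter that I will tune in terms of $K$. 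Since $S$ is short-range (width $\lesssim W$), the $S$-factor in the integrand only shifts the decay argument by $\mathrm{O}(W)$ and plays no role at leading order.

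The main computation splits the $r$-integration at $r_\star := W^{2\varepsilon}/|\mathrm{Im}w_t|$, recalling $1-t|m(z)|^2 \asymp |\mathrm{Im}w_t|$ from the proof of Lemma \ref{lemma:thetaestimates}. On the tail $[r_\star,\infty)$, the factor $e^{-c r|\mathrm{Im}w_t|}\le e^{-cW^{2\varepsilon}}$ together with the polynomial hypothesis $\eta\geq W^{-C}$ contributes at most $\mathrm{O}(e^{-W^\delta})$, which becomes the error term. On the bulk piece $[0, r_\star]$, for each fixed $k$ apply Laplace's method to the phase
\[
\phi(r):= c\, r|\mathrm{Im}w_t| + K_0\, W^{-1} r^{-1/2}|m|^{-1}|a-b-kN|,
\]
whose minimum is attained at $r\sim (|a-b-kN|/(W|\mathrm{Im}w_t|))^{2/3}$ with minimum value of order $K_0^{2/3}|\mathrm{Im}w_t|^{1/3}(|a-b-kN|/W)^{2/3}$. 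Summing over $k$ is geometric with tail rate bounded below by $N|\mathrm{Im}w_t|^{1/2}/W \gtrsim 1$ (a consequence of $\eta \gtrsim W^2/N^2$), so only $k=0$ contributes at leading order.

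The main technical obstacle is converting this sublinear $2/3$-power exponent into the claimed linear decay $K|\mathrm{Im}w_t|^{1/2}|a-b|_N/W^{1+\varepsilon}$; this is where the $W^\varepsilon$ slack in the statement is consumed. Set $E_T := K|\mathrm{Im}w_t|^{1/2}|a-b|_N/W^{1+\varepsilon}$ and split into two cases. If $E_T \le C_1(K,K_0)\,W^{2\varepsilon}$, elementary algebra shows the Laplace exponent dominates $E_T$ provided $K_0$ is chosen sufficiently large in terms of $K$. If instead $E_T \ge C_1 W^{2\varepsilon}$, the Laplace exponent is itself $\ge C_2 W^{4\varepsilon/3}$, and the main term $W^{-1}|\mathrm{Im}w_t|^{-1/2} e^{-C_2 W^{4\varepsilon/3}}$ is absorbed into the error $\mathrm{O}(e^{-W^\delta})$ for any $\delta < 4\varepsilon/3$. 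Finally, the bound on $(S^{1/2}\Theta_t S^{1/2})_{ab}$ follows from the $\Theta_t$-bound via the short-range support of $S^{1/2}$: the replacement $|x-y|_N \geq |a-b|_N - 2CW$ costs only an extra factor $\exp\{2CK|\mathrm{Im}w_t|^{1/2}/W^\varepsilon\}=\mathrm{O}(1)$, absorbed into the implicit constants.
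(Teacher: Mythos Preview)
Your approach is valid but differs from the paper's. Both start from the same integral representation \eqref{eq:thetaformula}, and both spend the $W^{\varepsilon}$ slack to pass from a nonlinear exponent to the claimed linear one, but the nonlinear exponents are obtained differently.

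The paper first reduces (via interpolation with Lemma~\ref{lemma:thetaestimates}) to a cruder estimate with prefactor $W^{C}$, and then proves that by a probabilistic argument: it writes $\mathcal{Q}S\,e^{rt|m|^{2}(S-\mathrm{Id})}\mathcal{Q}$ as a Poisson mixture over the number of steps of a random walk on $\mathbb{T}_{N}$, bounds $\mathbb{P}[a\to_{k+3} b]$ by Azuma's inequality $\lesssim\exp\{-c|a-b|_{N}^{2}/((k+3)W^{2})\}$, and splits at $k\sim W^{\varepsilon/2}(1+r)$. After the $r$-integration this produces a genuinely sub-Gaussian exponent $\exp\{-c|a-b|_{N}^{2}|\mathrm{Im}w_{t}|/W^{2+\varepsilon}\}$, which is then downgraded to sub-exponential in one line via $e^{-\gamma^{2}}\lesssim_{K}e^{-K\gamma}$.

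You instead recycle the sub-exponential $\Z$-heat-kernel bound from (A.12) of \cite{DT16} already quoted in Lemma~\ref{lemma:thetaestimates}, and optimize the resulting phase $\phi(r)=cr|\mathrm{Im}w_{t}|+K_{0}W^{-1}r^{-1/2}|a-b|_{N}$ directly; this yields only a $2/3$-power exponent, which forces your two-case argument with the $W^{\varepsilon}$ buffer. A few small corrections: in Case~2 the Laplace exponent is $\gtrsim W^{2\varepsilon}$ rather than $W^{4\varepsilon/3}$; the $k$-sum is controlled by $(N/W)^{2/3}|\mathrm{Im}w_{t}|^{1/3}\gtrsim1$ (the $2/3$-power rate), not the linear quantity you wrote; and you should state explicitly that the prefactor comes from the crude bound $\int_{0}^{r_{\star}}W^{-1}r^{-1/2}e^{-\phi}\,dr\le 2W^{-1}r_{\star}^{1/2}e^{-\inf\phi}=2W^{-1+\varepsilon}|\mathrm{Im}w_{t}|^{-1/2}e^{-\inf\phi}$. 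The paper's Azuma route is cleaner because Gaussian-to-exponential is trivial, while your route avoids both Azuma and the separate interpolation step for the prefactor.
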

\begin{proof}
We claim that it is enough to show that there exists $C=\mathrm{O}(1)$ such that
\begin{align}
(\Theta_{t})_{ab}+(S^{\frac12}\Theta_{t}S^{\frac12})_{ab}&\lesssim_{{K}} W^{C}\exp\left\{-\frac{{K}|\mathrm{Im}w_{t}|^{1/2}|a-b|_{N}}{W^{1+\varepsilon}}\right\}+\mathrm{O}(\mathrm{e}^{-W^{\delta}}).\label{eq:sthetarangemain}
\end{align}
Indeed, if this is true, then we can interpolate it with Lemma \ref{lemma:thetaestimates} to get the following for any $c\in(0,1)$:
\begin{align*}
(\Theta_{t})_{ab}+(S^{\frac12}\Theta_{t}S^{\frac12})_{ab}\lesssim (W^{-1}|\mathrm{Im}w_{t}|^{-\frac12})^{1-c}\left\{W^{C}\exp\left\{-\frac{{K}|\mathrm{Im}w_{t}|^{1/2}|a-b|_{N}}{W^{1+\varepsilon}}\right\}+\mathrm{O}(\mathrm{e}^{-W^{\delta}}\right\}^{c}.
\end{align*}
Choosing $c>0$ small depending on $C$ gives the desired estimate. We are now left to show \eqref{eq:sthetarangemain}.

We consider the matrix $\mathcal{Q}S\mathrm{e}^{rt|m(z)|^{2}(S-\mathrm{Id})}\mathcal{Q}$, where either $\mathcal{Q}=\mathrm{Id}$ or $\mathcal{Q}=S^{1/2}$. This is the transition operator for the following Markov process. First, take a step according to the transition matrix $\mathcal{Q}$. Then, for time $rt|m(z)|^{2}$, do a continuous-time simple random walk with transition matrix $S$. Next, take one step according to the transition matrix $\mathcal{Q}S$. The entry $(\mathcal{Q}S\mathrm{e}^{rt|m(z)|^{2}(S-\mathrm{Id})}\mathcal{Q})_{ab}$ is then the probability $\mathbb{P}[a\to_{k+3}b]$ of going $a\to b$ under such a dynamic. Now, note that the number of steps taken in this ``patched" random walk dynamic is $3$ plus the number of steps in the continuous-time part (corresponding to $\exp\{rt|m(z)|^{2}(S-\mathrm{Id})\}$). The speed of taking a jump in this dynamic is $1$ (since the rows and columns of $S$ sum to $1$). Thus, the probability of taking $k+3$-steps according to the random walk defined by the operator $\mathcal{Q}S\mathrm{e}^{rt|m(z)|^{2}(S-\mathrm{Id})}\mathcal{Q}$ is the probability of equaling $k$ for a Poisson random variable of speed parameter $rt|m(z)|^{2}\lesssim1$. Therefore,
\begin{align*}
(\mathcal{Q}S\mathrm{e}^{rt|m(z)|^{2}(S-\mathrm{Id})}\mathcal{Q})_{ab}&=\sum_{k=0}^{\infty}\frac{r^{k}t^{k}|m(z)|^{2k}\mathrm{e}^{-rt|m(z)|^{2}k}}{k!}\mathbb{P}[a\to_{k+3}b].
\end{align*}
Note, now that $\mathbb{P}[a\to_{k+3}b]$ is bounded above by the probability that a martingale travels distance greater than $|a-b|_{N}$ after $k+3$ steps. This martingale has steps of length $\mathrm{O}(W)$. Thus, by the Azuma martingale inequality, we deduce that for some universal constant $c>0$, we have
\begin{align*}
\mathbb{P}[a\to_{k+3}b]&\lesssim\exp\left\{-c\frac{|a-b|_{N}^{2}}{(k+3)W^{2}}\right\}.
\end{align*}
We combine the last two displays and decompose the sum over $k$ into $k\leq W^{\varepsilon/2}(1+r)$ and $k>W^{\varepsilon/2}(1+r)$. In the former summation, we bound the previous display by replacing $k\mapsto W^{\varepsilon/2}(1+r)$ and then bounding the remaining sum over $k$ by $1$. In the latter summation, we bound $\mathbb{P}[a\to_{k+3}b]\leq1$. We deduce, for some universal constant $c>0$, that
\begin{align*}
(\mathcal{Q}S\mathrm{e}^{rt|m(z)|^{2}(S-\mathrm{Id})}\mathcal{Q})_{ab}&\lesssim \mathrm{e}^{-c\frac{|a-b|_{N}^{2}}{W^{2+\varepsilon/2}(1+r)}}+\sum_{k=W^{\varepsilon/2}(1+r)}^{\infty}\frac{r^{k}t^{k}|m(z)|^{2k}\mathrm{e}^{-rt|m(z)|^{2}k}}{k!}\\
&\lesssim \mathrm{e}^{-c\frac{|a-b|_{N}^{2}}{W^{2+\varepsilon/2}(1+r)}}+\mathrm{e}^{-cW^{\varepsilon/2}}.
\end{align*}
(The last line follows by the Stirling bound $k!\gtrsim k^{k}e^{-2k}$ so for $k\geq W^{\varepsilon/2}(1+r)$, we have $r^{k}/k!\lesssim \varepsilon^{k}$ for any small but fixed $\varepsilon>0$ if $W$ is large. This implies that the last term in the first line is controlled by $\sum_{k\geq W^{\varepsilon/2}}\varepsilon^{k}$, leading to the second line above.) We now return to \eqref{eq:thetaformula} and plug in the previous display:
\begin{align*}
(\mathcal{Q}\Theta_{t}\mathcal{Q})_{ab}&\lesssim|m(z)|^{2}\int_{0}^{\infty}\mathrm{e}^{-(1-t|m(z)|^{2})r}(\mathcal{Q}S\mathrm{e}^{rt|m(z)|^{2}(S-\mathrm{Id})}\mathcal{Q})_{ab}dr\\
&\lesssim|m(z)|^{2}\int_{0}^{\infty}\mathrm{e}^{-(1-t|m(z)|^{2})r}\left\{\mathrm{e}^{-c\frac{|a-b|_{N}^{2}}{W^{2+\varepsilon/2}(1+r)}}+\mathrm{e}^{-cW^{\varepsilon/2}}\right\}dr\\
&\lesssim|m(z)|^{2}\int_{0}^{\infty}\mathrm{e}^{-(1-t|m(z)|^{2})r}\mathrm{e}^{-c\frac{|a-b|_{N}^{2}}{W^{2+\varepsilon/2}(1+r)}}dr+\frac{|m(z)|^{2}}{1-t|m(z)|^{2}}\mathrm{e}^{-cW^{\varepsilon/2}}.
\end{align*}
We decompose the remaining integral into integrals on the intervals $[0,W^{\varepsilon/2}(1-t|m(z)|^{2})^{-1}]$ and $[W^{\varepsilon/2}(1-t|m(z)|^{2})^{-1},\infty)$. On the former integration domain, we can bound the sub-Gaussian factor by replacing $r\mapsto W^{\varepsilon/2}(1-t|m(z)|^{2})^{-1}$. On the latter, we can bound the sub-Gaussian factor by $1$. This gives
\begin{align*}
|m(z)|^{2}\int_{0}^{\infty}\mathrm{e}^{-(1-t|m(z)|^{2})r}\mathrm{e}^{-c\frac{|a-b|_{N}^{2}}{W^{2+\varepsilon/2}(1+r)}}dr&\lesssim |m(z)|^{2}\mathrm{e}^{-c\frac{|a-b|_{N}^{2}(1-t|m(z)|^{2})}{2W^{2+\varepsilon}}}\int_{0}^{W^{\varepsilon/2}(1-t|m(z)|^{2})^{-1}}\mathrm{e}^{-(1-t|m(z)|^{2})r}dr\\
&+|m(z)|^{2}\int_{W^{\varepsilon/2}(1-t|m(z)|^{2})^{-1}}^{\infty}\mathrm{e}^{-(1-t|m(z)|^{2})r}dr\\
&\lesssim \frac{|m(z)|^{2}}{1-t|m(z)|^{2}}\mathrm{e}^{-c\frac{|a-b|_{N}^{2}(1-t|m(z)|^{2})}{2W^{2+\varepsilon}}}+\frac{|m(z)|^{2}}{1-t|m(z)|^{2}}\mathrm{e}^{-W^{\varepsilon/2}}.
\end{align*}
We showed in the proof of Lemma \ref{lemma:thetaestimates} that $1-t|m(z)|^{2}\gtrsim\mathrm{Im}w_{t}\geq\eta\geq W^{-C}$ for some $C>0$. (The last inequality is an assumption, and the inequality $\mathrm{Im}w_{t}\geq\eta$ follows by $\mathrm{Im}w_{t}=\eta+(1-t)\mathrm{Im}m(z)$.) We also recall that $|m(z)|\asymp1$. Thus, the previous two displays give, for some $C=\mathrm{O}(1)$ and $c>0$ universal, that
\begin{align*}
|(\mathcal{Q}\Theta_{t}\mathcal{Q})_{ab}|&\lesssim W^{C}\exp\left\{-c\frac{|a-b|_{N}^{2}|\mathrm{Im}w_{t}|}{W^{2+\varepsilon}}\right\}+\mathrm{e}^{-W^{\varepsilon/2}}.
\end{align*}
Now apply $\exp\{-\gamma^{2}\}\lesssim_{{K}}\exp\{-{K}\gamma\}$, which holds for all ${K},\gamma>0$, to the first term on the RHS.
\end{proof}
Next, we present an estimate for the $B_{t}$ matrix from Lemma \ref{lemma:expansion}.
\begin{lemma}\label{lemma:Bestimates}
Assume that $|E|<2$ is fixed. For any $a,b$, we have
\begin{align*}
\sum_{\alpha}|(B_{t})_{\alpha b}|+\sum_{\beta}|(B_{t})_{a\beta}|&\lesssim1.
\end{align*}
\end{lemma}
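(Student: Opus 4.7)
The plan is to diagonalize $B_{t}=(I-tm(z)^{2}S)^{-1}$ in Fourier, using that $S$ is a real symmetric circulant convolution on $\mathbb{T}_{N}$. Since $B_{t}$ inherits these properties, the two sums in the statement are equal and independent of the index (by transposition and translation invariance), so it suffices to prove $\sum_{u\in\mathbb{T}_{N}}|(B_{t})_{0u}|\lesssim 1$ uniformly for $t\in[0,1]$ and $z$ with $|E|<2$ fixed. Setting $\hat S(k)=\sum_{w}S_{0w}e^{-2\pi ikw/N}\in\R$ and $\hat B_{t}(k)=(1-tm(z)^{2}\hat S(k))^{-1}$, split
\begin{equation*}
\hat B_{t}(k)=1+\phi(k),\qquad \phi(k):=\frac{tm(z)^{2}\hat S(k)}{1-tm(z)^{2}\hat S(k)},
\end{equation*}
so that $(B_{t})_{0u}=\delta_{u,0}+H_{t}(u)$ with $H_{t}(u):=N^{-1}\sum_{k}\phi(k)e^{-2\pi iku/N}$. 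The Kronecker delta already contributes $1$ to the $\ell^{1}$ norm, leaving the task of showing $\sum_{u}|H_{t}(u)|\lesssim 1$.

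The crucial input, and the main obstacle, is the uniform lower bound $|1-tm(z)^{2}\hat S(k)|\gtrsim 1$. Because $\hat S(k)$ is real and lies in $[-c_{\ast},1]$ for some $c_{\ast}<1$ (a consequence of $f$ being a smooth compactly supported probability density, so that $|\hat f(\xi)|<1$ strictly for $\xi\ne 0$), directly minimizing the quadratic $|1-tm^{2}\rho|^{2}=(1-t\rho\,\re m^{2})^{2}+(t\rho\,\im m^{2})^{2}$ over $\rho\in[-c_{\ast},1]$ produces a positive lower bound depending only on $c_{\ast}$ and on the gap between $|E|$ and $2$. Concretely, either the unconstrained minimizer $\rho^{\ast}=\re m^{2}/(t|m|^{4})$ lies in the interval, in which case the minimum is $1-(\re m^{2}/|m|^{2})^{2}$ (uniformly positive for $|E|$ bounded away from $2$), or $\rho^{\ast}$ lies outside (which occurs near $E=0$, where $\rho^{\ast}\approx -1/t\le -1<-c_{\ast}$), in which case the boundary minimum at $\rho=-c_{\ast}$ is $\gtrsim(1-c_{\ast})^{2}>0$. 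The phase of $m^{2}$ enters decisively here: the naive estimate $|1-tm^{2}\hat S(k)|\ge 1-t|m|^{2}\asymp\eta$ is useless and would only recover the trivial $\ell^{1}$ bound $\lesssim\eta^{-1}$ coming from the Neumann series $\sum_{k}(t|m|^{2})^{k}S^{k}$.

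Given the lower bound, the remainder is routine Fourier analysis combined with summation by parts. The approximation $\hat S(k)\approx\hat f(2\pi kW/N)$ (a Riemann sum, since $S_{0w}\approx Z_{N,W}^{-1}f(w/W)$) with $\hat f$ Schwartz gives $|\partial_{k}^{j}\phi(k)|\lesssim(W/N)^{j}$ and effective support of $\phi$ of size $\lesssim N/W$, whence $\sum_{k}|\phi(k)|\lesssim N/W$ and $\sum_{k}|\Delta^{2}\phi(k)|\lesssim W/N$. Splitting $\sum_{u}=\sum_{|u|_{N}\le W}+\sum_{|u|_{N}>W}$, the short-distance piece is controlled via $|H_{t}(u)|\le N^{-1}\sum_{k}|\phi(k)|\lesssim W^{-1}$, giving $\sum_{|u|_{N}\le W}|H_{t}(u)|\lesssim 1$. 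The long-distance piece follows from two discrete summations by parts, which convert $\phi(k)e^{-2\pi iku/N}$ into $\Delta^{2}\phi(k)$ divided by $|1-e^{-2\pi iu/N}|^{2}\asymp|u|_{N}^{2}/N^{2}$, producing $|H_{t}(u)|\lesssim W/|u|_{N}^{2}$ and hence $\sum_{|u|_{N}>W}|H_{t}(u)|\lesssim 1$. Combining the two pieces finishes the proof.
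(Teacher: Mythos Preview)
Your approach is correct and genuinely different from the paper's. The paper writes $B_{t}$ as a Laplace-type integral against the random-walk semigroup,
\[
B_{t}=m(z)^{-2}\int_{0}^{\infty}e^{-m(z)^{-2}(1-tm(z)^{2})r}\,e^{rt(S-\mathrm{Id})}\,dr,
\]
and then uses only that $e^{rt(S-\mathrm{Id})}$ is stochastic (row/column sums $=1$) together with the scalar bound $|1-tm(z)^{2}|\gtrsim1$. You instead diagonalize in Fourier and prove the pointwise lower bound $|1-tm(z)^{2}\rho|\gtrsim1$ for every $\rho\in[-c_{\ast},1]$, which is the sharper version of the same key fact; your quadratic-minimization argument for it (including the boundary case near $E=0$, where $\rho^{\ast}\approx-1/t$ escapes the interval) is clean and correct. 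The Fourier route also yields entrywise decay of $B_{t}$ as a by-product, which the semigroup argument does not.

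There is one gap worth flagging. You invoke ``$\hat f$ Schwartz'' to obtain $\sum_{k}|\phi(k)|\lesssim N/W$ and $\sum_{k}|\Delta^{2}\phi(k)|\lesssim W/N$, but the paper only assumes $f$ is a symmetric compactly supported probability density, not smooth. With no regularity beyond $f\in L^{\infty}$, compact support of $S_{0\cdot}$ gives the \emph{pointwise} bounds $|\Delta^{j}\hat S(k)|\lesssim(W/N)^{j}$, and Parseval gives $\sum_{k}|\Delta^{j}\hat S(k)|^{2}\lesssim N\cdot W^{-1}(W/N)^{2j}$; feeding these into your summation-by-parts scheme only yields $\sum_{|u|_{N}>W}|H_{t}(u)|\lesssim W^{1/2}$, not $O(1)$. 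A simple repair that stays within the paper's hypotheses is a weighted Cauchy--Schwarz: with $\langle u\rangle:=1+|u|_{N}/W$,
\[
\sum_{u}|H_{t}(u)|\le\Bigl(\sum_{u}\langle u\rangle^{-2}\Bigr)^{1/2}\Bigl(\sum_{u}\langle u\rangle^{2}|H_{t}(u)|^{2}\Bigr)^{1/2}\lesssim W^{1/2}\cdot\Bigl(\sum_{u}|H_{t}(u)|^{2}+W^{-2}\sum_{u}|u|_{N}^{2}|H_{t}(u)|^{2}\Bigr)^{1/2},
\]
and by Parseval both sums on the right are $\lesssim W^{-1}$ (using only $\sum_{w}S_{0w}^{2}\lesssim W^{-1}$ and $\sum_{w}|w|^{2}S_{0w}^{2}\lesssim W$), recovering $O(1)$. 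With this modification your argument goes through under the stated assumptions.
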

\begin{proof}
We first record the following analog of \eqref{eq:thetaformula}:
\begin{align*}
B_{t}&=\left\{1-tm(z)^{2}+tm(z)^{2}(S-\mathrm{Id})\right\}^{-1}=m(z)^{-2}\left\{m(z)^{-2}(1-tm(z)^{2})+t(S-\mathrm{Id})\right\}^{-1}\\
&=m(z)^{-2}\int_{0}^{\infty}\mathrm{e}^{-m(z)^{-2}(1-tm(z)^{2})r}\mathrm{e}^{rt(S-\mathrm{Id})}dr.
\end{align*}
As in the proof of Lemma \ref{lemma:thetaestimates}, the matrix $\exp\{rt(S-\mathrm{Id})\}$ is the transition operator for a random walk, and thus its entries are non-negative with row and column sums equal to $1$. Moreover, we know that $|m(z)|\asymp1$ as noted in the proof of Lemma \ref{lemma:thetaestimates} (see also Lemma 6.2 in \cite{EY} for the lower bound $\mathrm{Im}m(z)\gtrsim1$ in the bulk $|E|<2$). Thus, we have 
\begin{align*}
\sum_{\alpha}|(B_{t})_{\alpha b}|+\sum_{\beta}|(B_{t})_{a\beta}|&\lesssim\int_{0}^{\infty}\mathrm{e}^{-|m(z)|^{-2}|1-tm(z)^{2}|r}\left\{\sum_{\alpha}\mathrm{e}^{rt(S-\mathrm{Id})}_{\alpha b}+\sum_{\beta}\mathrm{e}^{rt(S-\mathrm{Id})}_{a\beta}\right\}dr\\
&\lesssim\int_{0}^{\infty}\mathrm{e}^{-|m(z)|^{-2}|1-tm(z)^{2}|r}dr=\frac{|m(z)|^{2}}{|1-tm(z)^{2}|}.
\end{align*}
Recall that $|E|<2$. In this case, we have $m(z)^{2}=\frac14(2z^{2}-4-2z\sqrt{z^{2}-4})$. Write $z=E+i\eta$, and assume that $\eta<\varepsilon$ for some $\varepsilon>0$ to be determined in the sense that $z\approx E$. In particular, we have $m(z)^{2}=\frac14[2E^{2}-4-2E\sqrt{E^{2}-4}+\mathrm{O}(\varepsilon)]$. Since $|E|<2$, the real part of $m(z)^{2}$ is $\frac14[2E^{2}-4]+\mathrm{O}(\varepsilon)$, which is bounded away from $1$ if $\varepsilon>0$ is small enough. In particular, for $z=E+i\eta$ and $|E|<2$, we know that $|1-tm(z)^{2}|\geq c$ for some universal constant $c>0$. On the other hand, we also know that $|1-tm(z)^{2}|\geq1-t|m(z)|^{2}\geq1-|m(z)|^{2}\gtrsim\eta$ as in the proof of Lemma \ref{lemma:thetaestimates} (see Lemma 3.5 in \cite{EKYY}). So, if $\eta\geq\varepsilon>0$, then we know $|1-tm(z)^{2}|\geq c$ for a possibly different universal $c>0$ as well. Ultimately, we know that $|1-tm(z)^{2}|\geq c$ for some universal $c>0$. Using this and $|m(z)|\asymp1$, we have 
\begin{align*}
\frac{|m(z)|^{2}}{|1-tm(z)^{2}|}\lesssim 1,
\end{align*}
so the previous two displays give the desired bound.
\end{proof}

\bibliographystyle{abbrv}
\bibliography{random-band-2}
\end{document}